\newtheorem{theorem}{Theorem}[section]
\newtheorem{lemma}[theorem]{Lemma}
\theoremstyle{definition}
\newtheorem{question}[theorem]{Question}
\newcounter{cases}[theorem]
\newtheorem{case}[cases]{Case}
\numberwithin{subcase}{cases}
\theoremstyle{remark}
\newtheorem{remark}[theorem]{Remark}
\newcommand{\mc}[1]{\mathcal{#1}}
\newcommand{\res}{\!\!\upharpoonright}
\newcommand{\la}{\langle}
\newcommand{\ra}{\rangle}
\newcommand{\treeres}[2]{\{\rotatebox[origin=c]{90}{$\rightsquigarrow$}_{#1}^{#2}\}}
\newcommand{\Labels}{\mathtt{Labels}}
\DeclareMathOperator{\scope}{scope}
\DeclareMathOperator{\pred}{pred}
\newcommand{\ptime}{\mathsf{P}}
\newcommand{\nptime}{\mathsf{NP}}
\newcommand{\comp}{\mathsf{COMP}}
\newcommand{\pspace}{\mathsf{PSPACE}}
\newcommand{\concat}[0]{\textrm{\^{}}}
\begin{document}

\title{A Minimal Set Low for Speed}
\author{Rod Downey \and Matthew Harrison-Trainor}

\makeatletter
\def\blfootnote{\xdef\@thefnmark{}\@footnotetext}
\makeatother

\maketitle

\begin{abstract}
	An oracle $A$ is low-for-speed if it is unable to speed up the computation of a set which is already computable: if a decidable language can be decided in time $t(n)$ using $A$ as an oracle, then it can be decided without an oracle in time $p(t(n))$ for some polynomial $p$. The existence of a set which is low-for-speed was first shown by Bayer and Slaman who constructed a non-computable computably enumerable set which is low-for-speed. In this paper we answer a question previously raised by Bienvenu and Downey, who asked whether there is a minimal degree which is low-for-speed. The standard method of constructing a set of minimal degree via forcing is incompatible with making the set low-for-speed; but we are able to use an interesting new combination of forcing and full approximation to construct a set which is both of minimal degree and low-for-speed.
\end{abstract}

\section{Introduction}

Almost since the beginning of computational complexity theory,
we have had results about oracles and their effect 
on the running times of computations. 
For example Baker, Gill, and Solovay \cite{BakerGS1975} showed that on the one hand there are oracles~$A$ such that $\ptime^A=\nptime^A$ and on the other hand there are oracles~$B$ such that $\ptime^B \not= \nptime^B$, thus demonstrating that methods that relativize will not suffice to solve basic questions like $\ptime$ vs $\nptime$.
An underlying question is whether oracle results can say things about complexity questions in the unrelativized world.
The answer seems to be yes. For example,
Allender together with Buhrman and Kouck\'{y} \cite{AllenderBK2006} and with Friedman and Gasarch
\cite{AllenderFG2013} showed that 
oracle access to sets of random strings 
can give insight into 
basic complexity questions. 
In~\cite{AllenderFG2013} Allender, Friedman, and Gasarch
showed that 
$\bigcap_U \ptime^{R_{K_U}}\cap \comp\subseteq \pspace$
where $R_{K_U}$ denotes the strings whose prefix-free Kolmogorov complexity
(relative to universal machine $U$) is at least their length, and 
$\comp$ denotes the collection of computable sets.
Later the ``$\cap \comp$'' was removed by 
Cai, Downey, Epstein, Lempp, and J. Miller \cite{CaiDELM2014}.
Thus we conclude that reductions to very complex sets like the 
random strings somehow gives insight into 
very simple things like computable sets.

One of the classical notions in computability theory is that of lowness.
An oracle is low for a specific type of problem if that oracle does not help to solve that problem. A language $A$
is low if the halting problem relative to $A$ has the same Turing degree
(and hence the same computational content) as the halting problem.
Slaman and Solovay \cite{SS} characterized languages $L$ 
where oracles are of no help in Gold-style learning theory:
$EX^L=EX$ iff $L$ is low and 1-generic.
Inspired by this and other lowness results in classical computability,
Allender asked whether 
there were non-trivial sets which were ``low for speed'' in that, 
as oracles, they did not accelerate 
running times  
of computations by more than a polynomial amount.
Of course, as stated this makes little sense since 
using any $X$ as oracle, we can decide membership in~$X$ in linear time, while without an oracle $X$ may not even be computable at all!
Thus, what we are really interested in is the set of oracles which do not speed up the computation of \emph{computable sets} by more than a polynomial amount. More precisely, an oracle~$X$ is \emph{low for speed} if for any computable language~$L$, if some Turing machine $M$ with access to oracle~$X$ decides $L$ in time~$f$, then there is a Turing machine~$M'$ without any oracle and a polynomial~$p$ such that $M'$ decides~$\mathcal{L}$ in time~$p \circ f$. (Here the computation time of an oracle computation is counted in the usual complexity-theoretic fashion: we have a query tape on which we can write strings, and once a string~$x$ is written on this tape, we get to ask the oracle whether $x$ belongs to it in time $O(1)$.)

There are trivial examples of such sets, namely oracles that belong to~$\ptime$, because any query to such an oracle can be replaced by a polynomial-time computation. Allender's precise question was therefore: 

\begin{center}
	Is there an oracle $X \notin \ptime$ which is low for speed?
\end{center}

\noindent Such an~$X$, if it exists, has to be non-computable, for the same reason as above: if $X$ is computable and low for speed, then $X$ is decidable in linear time using oracle~$X$, thus---by lowness---decidable in polynomial time without oracle, i.e., $X \in \ptime$. 

A partial answer was given by Lance Fortnow (unpublished), who observed the following.

\begin{theorem}[Fortnow] If $X$ is a hypersimple and computably enumerable oracle, then $X$ is low for polynomial time, in that if $L \in \ptime^X$ is computable, then $L \in \ptime$.
\end{theorem}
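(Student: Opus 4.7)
Let $M$ be a polynomial-time oracle machine with time bound $p(n)$ such that $M^X$ decides $L$, and fix a computable enumeration $X = \bigcup_s X_s$. The candidate polynomial-time algorithm for $L$ is the obvious one: on input $x$, run the enumeration of $X$ for $t(|x|)$ many steps to obtain $X_{t(|x|)}$ (for some polynomial $t$ to be chosen), and then simulate $M^{X_{t(|x|)}}(x)$, resolving each oracle query by direct lookup in $X_{t(|x|)}$. This runs in time polynomial in $|x|$, and the task is to choose $t$ so that the simulation is correct on all but finitely many inputs; the finitely many exceptions can be hardcoded.

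Correctness reduces to a single observation. Let $Y(x)$ denote the set of ``yes'' queries of the true computation $M^X(x)$, and let $\pi(x) = \mu s\, [X_s \supseteq Y(x)]$. If $s \ge \pi(x)$, then $M^{X_s}(x)$ and $M^X(x)$ follow the same computation path step by step: at every queried $y$, either $y \in Y(x) \subseteq X_s$ so both say ``yes'', or $y \in \overline X$ so both say ``no'' (since $X_s \subseteq X$). Hence $M^{X_s}(x) = L(x)$. It therefore suffices to prove that $\pi(x) \le t(|x|)$ for some polynomial $t$ and all sufficiently long $x$.

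Suppose for contradiction that no such polynomial exists, so for every polynomial $q$ there are infinitely many inputs with $\pi(x) > q(|x|)$. The plan is to leverage this to build a computable strong array $(D_m)_{m \in \N}$ of pairwise disjoint finite sets, each meeting $\overline X$, contradicting hypersimplicity. The intended witnesses are the ``no'' queries of stabilized simulations: for each selected input $x_m$, one finds (by unbounded but computable search, using that $L$ is computable) a stage $s_m$ at which $M^{X_{s_m}}(x_m)$ agrees with $L(x_m)$ and remains stable over many consecutive subsequent stages, and lets $D_m$ be the set of queries of $M^{X_{s_m}}(x_m)$ answered ``no''. Disjointness can be forced by choosing inputs with $|x_{m+1}| \gg p(|x_m|)$, so that the query strings live in disjoint ranges of string lengths.

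The main obstacle is that the failure hypothesis directly supplies only ``late-enumerating yes'' queries (elements of $X \setminus X_s$), which are in $X$, whereas hypersimplicity demands witnesses in $\overline X$. The construction must therefore pivot from these late-$X$ witnesses to genuine $\overline X$ witnesses. Using the computability of $L$ to detect stabilization gets us close, but we must rule out ``accidental agreements'' $M^{X_s}(x) = L(x)$ occurring strictly before $\pi(x)$, where the simulation traverses a wrong computation path that happens to yield the right answer---any ``no'' query harvested from such a simulation might really be a disguised element of $X$. Ensuring, uniformly and computably, that infinitely many of the chosen witnesses are genuine elements of $\overline X$ is the delicate combinatorial heart of the proof; the interplay between the computable sequence of approximations $(X_s)$, the computable target $L$, and the sparsity of $\overline X$ guaranteed by hypersimplicity is what makes the pivot possible.
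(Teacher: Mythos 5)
The paper states Fortnow's result without proof (it is attributed as unpublished), so there is no official argument in the paper to compare against. Your skeleton---simulate $M^{X_{t(|x|)}}(x)$ for a polynomial $t$, reduce correctness to a polynomial bound on the settling time $\pi(x)$, and if $\pi$ is superpolynomial build a disjoint strong array each of whose members meets $\overline{X}$---is a sensible outline, and your analysis of when $M^{X_s}(x)=M^X(x)$ is correct. But the final paragraph is not a proof step; it names the missing lemma and expresses a hope that it can be supplied, and this missing lemma is the entire content of the theorem.

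To make the gap concrete: the failure hypothesis hands you, for each bad input, a late-enumerated \emph{yes}-query of the true computation, which is an element of $X$, whereas non-hyperimmunity of $\overline{X}$ requires witnesses in $\overline{X}$. You propose to harvest the ``no''-queries $D_m$ of a simulation $M^{X_{s_m}}(x_m)$ that outputs $L(x_m)$ and appears stable for many subsequent stages, but it is genuinely possible that every ``no''-query of such a simulation lies in $X\setminus X_{s_m}$. For instance, a machine that queries $y_1,y_2$ and outputs their parity: if both $y_1,y_2\in X$ but neither has appeared by stage $s$, the simulated output is $0$, which equals the true output, yet $D_m=\{y_1,y_2\}\subseteq X$. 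Waiting for ``local stability'' does not help, because for a non-computable c.e.\ set the true settling time on strings of length $\le p(|x_m|)$ can exceed any computable function of $|x_m|$; so there is no computable certificate that a stable-looking simulation is actually the final computation $M^X(x_m)$, and hence no guarantee that $D_m\cap\overline{X}\ne\emptyset$. (A smaller unaddressed point: $D_m$ may be empty if the simulated computation makes only ``yes''-queries; this case interacts with the accidental-agreement problem and also needs handling.) As written, the construction of the disjoint strong array is not justified, so the proposal does not yet constitute a proof.
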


Allender's question was finally solved by Bayer and Slaman, who showed the following. 

\begin{theorem}[Bayer-Slaman \cite{Bayer-PhD}] 
	There are non-computable, computably enumerable, sets~$X$ which are low for speed.  
\end{theorem}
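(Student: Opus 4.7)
The plan is to construct $X$ as a c.e.\ set by a finite-injury priority argument, building a computable enumeration $X_0 \subseteq X_1 \subseteq \cdots$ with $X = \bigcup_s X_s$ and at most one element added per stage. The requirements are the usual diagonalization requirements $P_e \colon X \neq \varphi_e$ (ranging over partial computable characteristic functions, to guarantee non-computability) and, for each oracle Turing machine $M_e$, a low-for-speed requirement
\[
N_e \colon \text{if } M_e^X \text{ decides a computable language } L \text{ in time } f, \text{ then } L \text{ is decidable without oracle in time } p(f(n)) \text{ for some polynomial } p.
\]
Each $P_e$ is handled by the standard Friedberg strategy of picking a fresh witness $x_e$ and enumerating $x_e$ when $\varphi_e(x_e) = 0$ is observed.

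The crux is the strategy for $N_e$, implemented by imposing a \emph{sparsity invariant} on the enumeration: any element $x$ entering $X$ at stage $s$ must satisfy $|x| \geq g(s)$ for an appropriately chosen computable function $g$ (for concreteness take $g(s) = 2^s$). Equivalently, no string of length $\leq \ell$ ever enters $X$ after stage $g^{-1}(\ell)$, so $X_{g^{-1}(\ell)}$ agrees with $X$ on every string of length $\leq \ell$. Given a computable $L$ decided by $M_e^X$ in time $f$, the simulator $M'$ on input $y$ of length $n$ sets $s^* = g^{-1}(f(n))$, runs the construction for $s^*$ stages to obtain $X_{s^*}$, and then outputs $M_e^{X_{s^*}}(y)$. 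Correctness is immediate since $M_e^X(y)$ only queries strings of length $\leq f(n)$, on which $X_{s^*}$ agrees with $X$. Arranging the construction to do only $\mathrm{poly}(s)$ work per stage (for instance by capping each subroutine $\varphi_{e,s}$ at $s$ steps), the total simulator time is $\mathrm{poly}(s^*) + O(f(n)) = \mathrm{poly}(\log f(n)) + O(f(n)) = \mathrm{poly}(f(n))$.

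The principal obstacle is reconciling the sparsity invariant with the $P_e$ Friedberg strategy: the witness $x_e$ must be long enough to be eligible for enumeration at whatever stage $\varphi_e(x_e) = 0$ is first observed, yet this stage is not bounded a priori, and a witness that is too short for the current sparsity threshold can never be enumerated. A naive ``fix $x_e$ up front'' approach is defeated by adversarially slow total $\varphi_e$. The resolution is to let each $P_e$ refresh its witness $x_e$ to a fresh string of length $g(s)$ whenever its current candidate becomes too short, using the finite-injury priority ordering $N_0, P_0, N_1, P_1, \ldots$ so that each $P_e$ need only respect the sparsity restraints of higher-priority $N_{e'}$. The delicate combinatorial step, and the technical heart of the argument, is verifying that despite possibly many refreshes the strategy eventually settles on a witness against any total $\varphi_e$, so that each $P_e$ is satisfied while the sparsity invariant is preserved for every $N_e$.
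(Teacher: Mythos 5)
The proposal has a fatal flaw that goes beyond the ``delicate combinatorial step'' you flagged: the sparsity invariant makes $X$ computable, directly contradicting the non-computability requirement.

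Concretely, you require that any $x$ entering $X$ at stage $s$ satisfies $|x| \geq g(s)$ with $g$ a computable, strictly increasing function, and you note (correctly) that this gives $X_{g^{-1}(\ell)} \cap \{0,1\}^{\leq \ell} = X \cap \{0,1\}^{\leq \ell}$. But the stages $X_s$ are produced by a computable enumeration, so $s \mapsto X_s$ is uniformly computable. Combining these two facts, $y \in X$ iff $y \in X_{g^{-1}(|y|)}$, and this is a decidable condition. In other words, a computable approximation with a computable modulus of convergence is just a computable set, and the $N_e$ simulator you describe is literally a decision procedure for $X$. No $P_e$ strategy can repair this, because the obstruction is not a timing conflict to be finessed by refreshing witnesses --- it is that the invariant you imposed \emph{entails} computability. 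One can also see the $P_e$ failure directly: the decision procedure $y \mapsto X_{g^{-1}(|y|)}(y)$ is total computable and equals $X$, so it appears as some $\varphi_{e_0}$, and $P_{e_0}$ is unsatisfiable; with $g(s) = 2^s$ the budget for a length-$L$ witness is only $\log_2 L$ stages, which is not even enough to read the witness, so every $P_e$ with a non-trivially-slow $\varphi_e$ is doomed to refresh forever.

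The actual Slaman--Bayer strategy (and the strategy this paper uses for its $\mathcal{L}$-requirements, sketched in Section~2) is structurally different and avoids any computable modulus. The simulation $\Xi$ is \emph{greedy}: on input $y$ it dovetails over candidate approximations and commits to the first converging computation $\Psi^\sigma(y)$ it finds, without any guarantee that $\sigma$ is actually an initial segment of (or agrees with) the set being built. The construction then has two ways to keep its promise: either it restrains the set so that the committed computation is honored, or --- if $\Xi$ has committed to a value disagreeing with the target $R$ --- it deliberately makes the set agree with the witness $\sigma$, so that $\Psi^X \neq R$ and the lowness requirement is vacuously satisfied. This disjunction is exactly what dissolves the tension you ran into: the simulator's answer is allowed to be wrong precisely in the case where the hypothesis of the requirement fails, so no computable modulus is needed and the set can genuinely be non-computable. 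Your correct observation that queries during a time-$t$ computation have length $\leq t$ (so only a bounded portion of the oracle matters) is still used, but it bounds the \emph{use} of an individual committed computation rather than imposing a global settling-time on $X$.
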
 

\noindent Bayer showed that whether 1-generic sets were low for speed depended on
whether $\ptime = \nptime$. 
In \cite{BienvenuDowney}, Bienvenu and Downey began an analysis of precisely 
what kind of sets/languages could be low for speed. The showed for instance,
randomness always accelerates some computation in that no 
Schnorr random set is low for speed. 
They also constructed a perfect $\Pi_1^0$ class all of whose members were low for speed. Among other results, they demonstrated that being low for speed 
did not seem to align very well to having low complexity in that 
no set of low computably enumerable Turing degree could also be low for speed.

From one point of view the sets with barely non-computable information are 
those of minimal Turing degree. Here we recall that ${\bf a}$ is a 
\emph{minimal} Turing degree if it is nonzero and there is 
no degree ${\bf b} $ with ${\bf 0}<{\bf b}<{\bf a}$.
It is quite easy to construct a set of minimal Turing degree which is 
not low for speed, and indeed any natural minimality construction seems to give this.
That is because natural Spector-forcing style dynamics  seem 
to entail certain delays in any construction, even a full approximation one,
which cause problems with the polynomial time simulation of 
the oracle computations being emulated.
In view of this, Bienvenu and Downey
asked the following question:

\begin{question} Can a set $A$ of minimal Turing degree be low for speed?
\end{question}

In the present paper we answer this question affirmatively:

\begin{theorem} There is a set $A$ which is both of minimal Turing degree and
	low for speed.
\end{theorem}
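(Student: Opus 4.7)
The plan is to build $A$ via a full approximation $A = \lim_s \sigma_s$, where the combinatorial framework is Spector-style perfect-tree forcing for minimality rather than the c.e.\ enumeration of Bayer--Slaman. I would construct uniformly computable perfect trees $T_0 \supseteq T_1 \supseteq \cdots \subseteq \{0,1\}^*$ together with a sequence of finite commitments $\sigma_s$, each lying on the currently relevant tree $T_{e(s)}$ and each ultimately extending its predecessors. The nested trees handle minimality in the usual Spector fashion: each $T_{e+1}$ is chosen so that either $\Phi_e^X$ is partial for every $X$ through it, or equals a fixed computable function on every path, or $T_{e+1}$ is $\Phi_e$-splitting so that $A$ is recovered from $\Phi_e^A$.

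For low-for-speed, the idea is to make the map $s \mapsto \sigma_s$ polynomial-time accessible and slow. Given an oracle machine $M$ purporting to decide a computable language $L$ in time $t(n)$, the unrelativized simulator $M'$ on input $x$ of length $n$ computes $\sigma_s$ at some stage $s$ polynomial in $t(n)$ and then runs $M^{\sigma_s}(x)$. For this to be correct, the construction must guarantee that by this polynomial stage the approximation already agrees with $A$ on all bits of length $\le t(n)$, i.e.\ $A \uh t(n)$ is a prefix of $\sigma_s$. Thus we need (i) high levels of $\sigma_s$ to stabilize early relative to the stage counter, and (ii) the cost of computing $\sigma_s$ to be polynomial in $s$. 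The trees $T_e$ should therefore be built so that both tree membership and the splitting structure at any given depth are uniformly computable in polynomial time in the depth and in $e$.

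The main obstacle is the fundamental tension between Spector's unbounded search for $\Phi_e$-splittings and the required slowness of the approximation: classically one may need to look arbitrarily deep to determine whether $T_e$ has a $\Phi_e$-splitting above a given node, and this is exactly the delay Bienvenu and Downey identified as the blocker. My approach is to pre-assign each minimality requirement a depth budget and a stage budget: if $T_e$ exhibits no $\Phi_e$-splitting within its budgeted depth by its budgeted stage, we default to a non-splitting subtree and accept the corresponding outcome. A finite-injury priority argument then manages conflicts in which a higher-priority minimality requirement forces a revision of $\sigma_s$ that would otherwise upset a lower-priority low-for-speed commitment; since only finitely many minimality requirements sit above any given priority, the revisions are finite in number, occur early, and can be absorbed by the polynomial slack in the slowness condition. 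The delicate bookkeeping needed to reconcile the forcing schedule with the slow-approximation schedule is where the bulk of the technical work will lie, and verifying that the two schedules can be chosen consistently is the step I would expect to be the real crux.
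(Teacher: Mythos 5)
The proposal does not work as stated, and it also misses the two ideas that make the paper's construction go through.

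The slowness condition you impose is too strong: you ask that, on input $x$ of length $n$, by stage $s = p(t(n))$ the approximation already satisfies $A \uh t(n) \preceq \sigma_s$. Since this has to hold unconditionally during the construction (at stage $s$ we cannot yet know whether $\Psi_e^A$ will in fact equal a computable $R_i$), taking $t$ to be, say, the identity makes $A$ computable, hence in $\ptime$, defeating the whole construction. The paper avoids this by making the simulation $\Xi$ \emph{greedy} and \emph{allowed to be wrong}: $\Xi(x)$ dovetails over many candidate initial segments $\sigma$ on the master tree and takes the first convergent $\Psi^\sigma(x)$; it does not promise that $\sigma$ is an initial segment of $A$. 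The key coupling is then: either $\Xi$ agrees with $R_i$, and then any such $\sigma$ is kept available so the requirement is satisfied with the infinitary outcome; or $\Xi(x) \neq R_i(x)$ for some $x$, in which case $\sigma$ with $\Psi^\sigma(x) = \Xi(x) \neq R_i(x)$ is still on the tree and $\mathcal{L}$ wins by forcing $A \supseteq \sigma$. Your framing, where the simulator must literally track $A$'s prefixes at a computable rate, has no room for this either/or and collapses to computability.

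The budget idea for the minimality requirements also does not resolve the tension you correctly identify. If $\mathcal{M}_e$ gives up after its depth/stage budget and defaults to a non-splitting subtree, but $\Phi_e$-splits exist deeper, then that subtree is not actually split-free, and $\Phi_e^A$ may be total and noncomputable without computing $A$, so minimality fails. If instead you permit later revision once a split is found, you have reintroduced exactly the unbounded injury you were trying to cap, and the ``absorbed by polynomial slack'' claim is not justified: the revision stages are $\Pi_2$-complete to predict and are not bounded. The paper's actual mechanism is structurally different: the search for $\Phi_e$-splits is \emph{not} time-budgeted. Instead, nodes $\rho$ whose computations have already been picked up by a simulation are never discarded; they are kept as \emph{secondary children} that may fail to $e$-split with their siblings. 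The resulting tree satisfies only the weaker property that any two \emph{paths} $e$-split (secondary children are eventually ``caught up'' by splitting their extensions against everything else over finitely many more levels), and this weaker property still suffices for $\Phi_e^A \geq_T A$. Managing which nodes the various instances $\mathcal{L}_d^\nu$ are currently simulating requires the scope/label bookkeeping in the paper's tree construction; nothing in your proposal plays that role, and a finite-injury scheme over a single approximation $\sigma_s$ does not give you the room to keep the offending nodes alive while the splitting search continues.

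One smaller point: the trees $T_e$ cannot be a single uniformly computable nested sequence. The outcome of each $\mathcal{M}_e$ (splitting vs.\ nonsplitting, and at which node) is genuinely nonuniform. The paper handles this by computably building an entire indexed family $\mathbb{T}_e^\eta$ over all guesses $\eta$ and only afterwards selecting the true branch; the uniformity is in the family, not in the chosen chain.
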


\noindent The construction is a mix of forcing and full approximation of a kind 
hitherto unseen. The argument is a complicated priority construction in which the interactions between different requirements is quite involved. In order to make the set $A$ of minimal Turing degree, we must put it on splitting trees; and in order to make it low-for-speed, we must have efficient simulations of potential computations involving $A$. When defining the splitting trees, we must respect decisions made by our simulations, which restricts the splits we can choose. The splitting trees end up having the property that while every two paths through the tree split, two children of the same node may not split; finding splits is sometimes very delayed. This is a new strategy which does not seem to have been used before for constructing sets of minimal degree.

\section{The construction with few requirements}\label{sec:two}

We will construct a set $A$ meeting the following requirements:
\begin{align*}
\mathcal{M}_e:\qquad& \text{If $\Phi_e^A$ is total then it is either computable or computes $A$.}\\
\mathcal{L}_{\la e,i \ra}:\qquad& \text{If $\Psi_{e}^A = R_{i}$ is total and computable in time $t(n)$, then it is}\\ &\text{ computable in time $p(t(n))$ for some polynomial $p$.}\\
\mathcal{P}_e:\qquad& \text{$A \neq W_e$.}
\end{align*}
Here, $R_{i}$ is a partial computable function. The requirements $\mathcal{P}_e$ make $A$ non-computable, while the requirements $\mathcal{M}_e$ make $A$ of minimal degree. The requirements $\mathcal{L}_{\la e,i \ra}$ make $A$ low for speed.

\medskip{}

When working with Spector-style forcing, it is common to define a tree to be a map $T \colon 2^{< \omega} \to 2^{< \omega}$ such that $\sigma \preceq \tau$ implies $T(\sigma) \preceq T(\tau)$. We will need our trees to be finitely branching; so for the purposes of this proof a tree will be a computable subset of $2^{< \omega}$ so that each node $\sigma$ on the tree has finitely many children $\tau \succeq \sigma$. The children of $\sigma$ may be of any length, where by length we mean the length as a binary string. Our trees will have no dead ends, and in fact every node will have at least two children.
As usual, $[T]$ denotes the collection of paths through $T$.
Recall that for a functional $\Phi_e$, we 
will say that $T$ is $e$-\textit{splitting} if for any two distinct paths $\pi_1$ and $\pi_2$ through $T$, 
there is $x$ with 
\[\Phi_e^{\pi_1}(x)\downarrow \ne \Phi_e^{\pi_2}(x)\downarrow.\]
If $\tau_1$ and $\tau_2$ are initial segments of $\pi_1$ or $\pi_2$ respectively witnessing this, i.e., with
\[\Phi_e^{\tau_1}(x)\downarrow \ne \Phi_e^{\tau_2}(x)\downarrow,\]
and with a common predecessor $\sigma$, we say that they $e$-split over $\sigma$, or that they are an $e$-split over $\sigma$.
The requirements $\mathcal{M}_e$ will be satisfied by an interesting new mix of forcing and full approximation. Following the standard Spector argument, to satisfy $\mathcal{M}_e$ we attempt to make $A$ a path on a tree $T$ with either:
\begin{itemize}
	\item $T$ is $e$-splitting, and so for any path $B \in [T]$, $\Phi_e^B \geq_T B$; or
	\item for all paths $B_1,B_2 \in [T]$ and all $x$, if $\Phi_e^{B_1}(x) \downarrow$ and $\Phi_e^{B_2}(x) \downarrow$ then $\Phi_e^{B_1}(x) = \Phi_e^{B_2}(x)$, and so $\Phi_e^B$ is either  partial or computable for any $B \in [T]$.
\end{itemize}
Given such a tree, any path on $T$ satisfies $\mathcal{M}_e$.

The standard argument for building a minimal degree is a forcing argument. Suppose that we want to meet just the $\mathcal{M}$ and $\mathcal{P}$ requirements. We can begin with a perfect tree $T_{-1}$, say $T_{-1} = 2^{<\omega}$. Then there is a computable tree $T_0 \subseteq T_{-1}$ which is either $0$-splitting or forces $\Phi_0^A$ to be either computable or partial. We can then choose $A_0 \in T_0$ such that $A_0$ is not an initial segment of $W_e$. Then there is a computable tree $T_1 \subseteq T_0$ with root $A_0$ which is either $1$-splitting or forces $\Phi_1^A$ to be either computable or partial. We pick $A_1 \in T_1$ so that $A_1$ is not an initial segment of $W_1$, then $T_2 \subseteq T_1$ with root $A_1$, and so on. Then $A = \bigcup A_i$ is a path through each $T_i$, and so is a minimal degree. Though each $T_i$ is computable, they are not uniformly computable; given $T_i$, to compute $T_{i+1}$ we must know whether $T_{i+1}$ is to be $(i+1)$-splitting, to force partiality, or to force computability.

\medskip{}

We cannot purely use forcing the meet the lowness requirements $\mathcal{L}_{\la e,i\ra}$. We use something similar to the Slaman-Beyer strategy from \cite{Bayer-PhD}. The entire construction will take place on a tree $T_{-1}$ with the property that it is polynomial in $|\sigma|$ to determine whether $\sigma \in T_{-1}$, and that moreover, for each $n$, there are polynomially many in $n$ strings of length $n$ on $T_{-1}$. For example, let $\sigma \in T_{-1}$ if it is of the form
\[ a_1^{2^0} a_2^{2^1} a_3^{2^2} a_4^{2^3} \cdots \]
where each $a_i \in \{0,1\}$.
So, for example, $100111100000000 \in T_{-1}$.

First we will show how to meet $\mathcal{L}_{\la e,i \ra}$ in the absence of any other requirements. For simplicity drop the subscripts $e$ and $i$ so that we write $\Psi = \Psi_{e}$ and $R = R_{i}$. The idea is to construct a computable simulation $\Xi$ of $\Psi^A$, with $\Xi(x)$ computable in time polynomial in the running time of $\Psi^A(x)$, so that if $\Psi^A = R$ then $\Xi = \Psi^A$. We compute $\Xi(x)$ as follows. We computably search over $\sigma \in T_{-1}$ (i.e.\ over find potential initial segments $\sigma$ of $A$) and simulate the computations $\Psi^\sigma(x)$. When we find $\sigma$ with $\Psi^\sigma(x) \downarrow$, we set $\Xi(x) = \Psi^\sigma(x)$ for the first such $\sigma$. Of course, $\sigma$ might not be an initial segment of $A$, and so $\Xi$ might not be equal to $\Psi^A$; this only matters if $\Psi^A = R$ is total, as otherwise $\mathcal{L}$ is satisfied vacuously. If $x$ is such that $\Xi(x) \downarrow \neq R(x) \downarrow$, then there is some $\sigma \in T_{-1}$ witnessing that $\Psi^\sigma(x) = \Xi(x)$; the requirement $\mathcal{L}$ asks that $A$ extend $\sigma$, so that $\Psi^A(x) \neq R(x)$ and $\mathcal{L}$ is satisfied. So now we need to ensure that if $\Xi = \Psi^A = R$, then $\Xi$ is only polynomially slower than $\Psi^A$. We can do this by appropriately dovetailing the simulations so that if $\Psi^\sigma(x) \downarrow$ in time $t(x)$, the simulation $\Xi$ will test this computation in a time which is only polynomially slower than $t(x)$, and we will have $\Xi(x) \downarrow$ in time which is only polynomially slower than $t(x)$. For example, we might start by simulating one stage of the computation $\Psi^\sigma(x)$ for $\sigma$s of length one, then simulating two stages for $\sigma$s of length at most two, then three stages for $\sigma$s of length at most three, and so on. It is important here that $T_{-1}$ has only polynomially many nodes at height $n$ and we can test membership in $T_{-1}$ in polynomial time; so the $n$th round of simulations takes time polynomial in $n$.

Think of the simulations as being greedy and taking any computation that they find; and then, at the end, we can non-uniformly choose the initial segment of $A$ to force that either the simulation is actually correct, or to get a diagonalization.

\medskip{}

The interactions between the requirements get more complicated. Consider now two requirements $\mathcal{M} = \mc{M}_e$ and $\mathcal{L}$. If $\mc{L}$ is of higher priority than $\mc{M}$, there is nothing new going on---$\mc{M}$ knows whether $\mc{L}$ asked to have $A$ extend some node $\sigma$, and if it did, $\mc{M}$ tries to build a splitting tree extending $\sigma$. So assume that $\mathcal{M}$ is of higher priority than $\mathcal{L}$.

Write $\Phi = \Phi_e$. Assume that for each $\sigma \in T_{-1}$, there are $x$ and $\tau_1,\tau_2 \succeq \sigma$ such that $\Phi^{\tau_1}(x) \downarrow \neq \Phi^{\tau_2}(x) \downarrow$; and that for each $\sigma \in T_{-1}$ and $x$ there is $\tau \succeq \sigma$ such that $\Phi^{\tau}(x) \downarrow$. Otherwise, we could find a subtree of $T_{-1}$ which forces that $\Phi^A$ is either not total or is computable, and satisfy $\mc{M}$ by restricting to that subtree. This assumption implies that we can also find any finite number of extensions of various nodes that pairwise $e$-split, e.g. given $\sigma_1$ and $\sigma_2$, there are extensions of $\sigma_1$ and $\sigma_2$ that $e$-split. Indeed, find extensions $\tau,\tau^*$ of $\sigma_1$ that $e$-split, say $\Phi^{\tau_1}(x) \downarrow \neq \Phi^{\tau_2}(x) \downarrow$, and an extension $\rho$ of $\sigma_2$ with $\Phi^{\tau}(x) \downarrow$. Then $\rho$ $e$-splits with one of $\tau_1$ or $\tau_2$.

The requirement $\mathcal{L}$ non-uniformly guesses at whether or not $\mathcal{M}$ will succeed at building an $e$-splitting tree. Suppose that it guesses that $\mathcal{M}$ successfully builds such a tree. $\mathcal{M}$ begins with the special tree $T_{-1}$ described above, and it must build an $e$-splitting tree $T \subseteq T_{-1}$.

While building the tree, $\Xi$ will be simulating $\Psi^A$ by looking at computations $\Psi^\sigma$. The tree $T$ might be built very slowly, while $\Xi$ has to simulate computations relatively quickly. So when a node is removed from $T$, $\Xi$ will stop simulating it, but $\Xi$ will have to simulate nodes which are extensions of nodes in $T$ as it has been defined so far, but which have not yet been determined to be in or not in $T$. This leads to the following problem: Suppose that $\gamma$ is a leaf of $T$ at stage $s$, $\rho$ extends $\gamma$, and $\Xi$ simulates $\Psi^\rho(x)$ and sees that it converges, and so defines $\Xi(x) = \Psi^\rho(x)$. But then the requirement $\mathcal{M}$ finds an $e$-split $\tau_1,\tau_2 \succeq \gamma$ and wants to set $\tau_1$ and $\tau_2$ to be the successors of $\gamma$ on $T$, with both $\tau_1$ and $\tau_2$ incompatible with $\rho$. If we allow $\mathcal{M}$ to do this, then since $\mathcal{M}$ has higher priority than $\mathcal{L}$, $\mathcal{M}$ has determined that $A$ cannot extend $\rho$ as $\mc{M}$ restricts $A$ to be a path through $T$. So $\mc{L}$ has lost its ability to diagonalize and it might be that $\Psi^A = R$ (say, because this happens on all paths through $T$) but $\Psi^A(x) \neq \Psi^\rho(x) = \Xi(x)$. 

This means that $\mathcal{M}$ needs to take some action to keep computations that $\mathcal{L}$ has found on the tree. We begin by describing the most basic strategy for keeping a single node $\rho$ on the tree.

Suppose that at stage $s$ the requirement $\mathcal{M}$ wants to add children to a leaf node $\gamma$ on $T$. First, look for $\gamma_1,\gamma_2,\gamma_3$ extending $\gamma$ such that they pairwise $e$-split: for any two $\gamma_i,\gamma_j$, there is $x$ such that $\Phi_e^{\gamma_i}(x) \downarrow \neq \Phi_e^{\gamma_j}(x) \downarrow$. By our earlier assumption that each node on $T_{-1}$ has an $e$-splitting extension we will eventually find such elements, say at stage $t$. But it might be that by stage $t$, we have simulated $\Psi_t^\rho(x) \downarrow$ and set $\Xi(x)$ to be equal to this simulated computation. So for the sake of $\mc{L}$, we must keep $\rho$ on the tree. (Later, we will have to extend the strategy to worry about what happens if we have simulated multiple computations $\rho$, but for now assume that there is just one.)

To begin, we stop simulating any computations extending $\rho$. This means that we are now free to extend the tree however we like above $\rho$ without worrying about how this affects the simulations. We also stop simulating any other computations not compatible with $\gamma_1$, $\gamma_2$, or $\gamma_3$.

\begin{center}
	\begin{tikzpicture}[-,>=stealth',shorten >=1pt,shorten <=1pt, auto,node
	distance=2cm,thick,every loop/.style={<-,shorten <=1pt}]

	\node (gamma) at (0,1) {{$\gamma$}};
	
	\node (gamma1) at (-2,3) {{$\gamma_1$}};
	\node (gamma2) at (-1,3) {{$\gamma_2$}};
	\node (gamma3) at (0,3) {{$\gamma_3$}};
	\node (rho) at (2,3) {{$\rho$}};
	
	\path (0,0) edge[-] (gamma);
%
	\path (gamma) edge[-] (gamma1);
	\path (gamma) edge[-] (gamma2);
	\path (gamma) edge[-] (gamma3);
	\path (gamma) edge[-] (rho);
%
%
	\node (rhoe) at (-1,4) {{Simulated by $\mc{L}$}};
%
	\draw [decorate,decoration={brace,amplitude=10pt}]
	(-2.3,3.3) -- (0.3,3.3);
	\end{tikzpicture}
\end{center}

Now look for an extension $\rho^*$ of $\rho$ that $e$-splits with at least two of $\gamma_1$, $\gamma_2$, and $\gamma_3$. We can find such a $\rho^*$ by looking for one with $\Phi^{\rho^*}$ defined on the values $x$ witnessing the $e$-splitting of  $\gamma_1$, $\gamma_2$, and $\gamma_3$, e.g., if $\Phi^{\gamma_1}(x) \neq \Phi^{\gamma_2}(x)$, and $\Phi^{\rho^*}(x) \downarrow$, then $\rho^*$ must $e$-split with either $\gamma_1$ or $\gamma_2$. Say that $\rho^*$ $e$-splits with $\gamma_1$ and $\gamma_2$. Then we define the children of $\gamma$ to be $\gamma_1$, $\gamma_2$, and $\rho^*$.

\begin{center}
	\begin{tikzpicture}[-,>=stealth',shorten >=1pt,shorten <=1pt, auto,node
	distance=2cm,thick,every loop/.style={<-,shorten <=1pt}]

	\node (gamma) at (0,1) {{$\gamma$}};
	
	\node (gamma1) at (-2,3) {{$\gamma_1$}};
	\node (gamma2) at (-1,3) {{$\gamma_2$}};
	\node (rho) at (2,3) {{$\rho$}};
	\node (rhos) at (2,4) {{$\rho^*$}};
	
	\path (0,0) edge[-] (gamma);
	%
	\path (gamma) edge[-] (gamma1);
	\path (gamma) edge[-] (gamma2);
	\path (gamma) edge[-] (rho);
	\path (rho) edge[-] (rhos);
	%
	%
	\node (rhoe) at (-1.5,4) {{Simulated by $\mc{L}$}};
	%
	\draw [decorate,decoration={brace,amplitude=10pt}]
	(-2.3,3.3) -- (-0.7,3.3);
	\end{tikzpicture}
\end{center}
\noindent So of the extensions of $\gamma$, some are simulated by $\mc{L}$, and others are not. If $\mc{L}$ has the infinitary outcome, where it never finds the need to diagonalize, then it will have $A$ extend either $\gamma_1$ or $\gamma_2$. It is only if $\mc{L}$ needs to diagonalize that it will have $A$ extend $\rho^*$---and in this case, $\mc{L}$ is satisfied and so does not have to simulate $\Psi^A$.

There is still an issue here. What if, while looking for $\rho^*$, we simulate a computation $\Psi^{\gamma_3}(y) \downarrow$, and set $\Xi(y) = \Psi^{\gamma_3}(y)$, and then only after this find that $\rho^*$ $e$-splits with $\gamma_1$ and $\gamma_2$? We can no longer remove $\gamma_3$ from the tree. Moreover, there might be many different nodes $\rho$ that we cannot remove from the tree---indeed, it might be that around stage $s$, we cannot remove any nodes at height $s$ from the tree, because each of them has some computation that we have simulated.

To deal with this, we have to build $e$-splitting trees in a weaker way. It will no longer be the case that every pair of children of a node $\sigma$ $e$-split, but we will still make sure that every pair of paths $e$-splits. (It might seem that this violates compactness, but in fact thinking more carefully it does not---the set of pairs of paths $(\pi_1,\pi_2)$ that $e$-split is an open cover of the non-compact topological space $[T]\times [T] - \Delta$ where $\Delta$ is the diagonal.)

So suppose again that we are trying to extend $\gamma$. Look for a pair of nodes $\gamma_1,\gamma_2$ that $e$-split. Suppose that $\rho_1,\ldots,\rho_n$ are nodes that have been simulated, that we must keep on the tree. (We might even just assume that $\rho_1,\ldots,\rho_n$ are all of the other nodes at the same level as $\gamma_1,\gamma_2$.) We stop simulating computations above $\rho_1,\ldots,\rho_n$.
\begin{center}
	\begin{tikzpicture}[-,>=stealth',shorten >=1pt,shorten <=1pt, auto,node
	distance=2cm,thick,every loop/.style={<-,shorten <=1pt}]

	\node (gamma) at (0,1) {{$\gamma$}};
	
	\node (gamma1) at (-2,3) {{$\gamma_1$}};
	\node (gamma2) at (-1,3) {{$\gamma_2$}};
	\node (rho1) at (0,3) {{$\rho_1$}};
	\node (rho2) at (1,3) {{$\rho_2$}};
	\node (rho3) at (2,3) {{$\rho_3$}};
	\node (dots) at (3,3) {{$\cdots$}};
	
	\path (0,0) edge[-] (gamma);
	%
	\path (gamma) edge[-] (gamma1);
	\path (gamma) edge[-] (gamma2);
	\path (gamma) edge[-] (rho1);
	\path (gamma) edge[-] (rho2);
	\path (gamma) edge[-] (rho3);
	%
	%
	\node (rhoe) at (-1.5,4) {{Simulated by $\mc{L}$}};
	%
	\draw [decorate,decoration={brace,amplitude=10pt}]
	(-2.3,3.3) -- (-0.7,3.3);
	\end{tikzpicture}
\end{center}
Now at the next step we need to add extensions to $\gamma_1$ and $\gamma_2$ just as we added extensions of $\gamma$. We look for extensions $\gamma_1^*$ and $\gamma_1^{**}$ of $\gamma_1$, $\gamma_2^*$ and $\gamma_2^{**}$ of $\gamma_2$, and $\rho_i^*$ and $\rho_i^{**}$ of $\rho_i$ such that all of these pairwise $e$-split. While we are looking for these, we might simulate more computations at nodes $\tau$ above $\gamma_1$ and $\gamma_2$, but there will be no more computations simulated above the $\rho_i$.
\begin{center}
	\begin{tikzpicture}[-,>=stealth',shorten >=1pt,shorten <=1pt, auto,node
	distance=2cm,thick,every loop/.style={<-,shorten <=1pt}]

	\node (gamma) at (0,1) {{$\gamma$}};
	
	\node (tau1) at (-5,4) {{$\tau$}};
	\node (tau2) at (-6,4) {{$\tau$}};
	\node (tau3) at (-1,4) {{$\tau$}};	
	\node (tau4) at (-2,4) {{$\tau$}};
	
	\node (gamma1) at (-4,3) {{$\gamma_1$}};
	\node (gamma2) at (-3,3) {{$\gamma_2$}};
	\node (gamma1s) at (-4.25,4) {{$\gamma_1^*$}};
	\node (gamma2s) at (-2.75,4) {{$\gamma_2^{**}$}};
	\node (gamma1ss) at (-3.75,4) {{$\gamma_1^{**}$}};
	\node (gamma2ss) at (-3.25,4) {{$\gamma_2^{*}$}};
	\node (rho1) at (1,3) {{$\rho_1$}};
	\node (rho1s) at (0.5,4) {{$\rho_1^*$}};
	\node (rho1ss) at (1.5,4) {{$\rho_1^{**}$}};
	\node (rho2) at (3,3) {{$\rho_2$}};
	\node (rho2s) at (2.5,4) {{$\rho_2^*$}};
	\node (rho2ss) at (3.5,4) {{$\rho_2^{**}$}};
	\node (rho3) at (5,3) {{$\rho_3$}};
	\node (rho3s) at (4.5,4) {{$\rho_3^*$}};
	\node (rho3ss) at (5.5,4) {{$\rho_3^{**}$}};
	\node (dots) at (7,3) {{$\cdots$}};

	\path (0,0) edge[-] (gamma);
	%
	\path (gamma) edge[-] (gamma1);
	\path (gamma) edge[-] (gamma2);
	\path (gamma1) edge[-] (gamma1s);
	\path (gamma1) edge[-] (gamma1ss);
	\path (gamma1) edge[-] (tau1);
	\path (gamma1) edge[-] (tau2);
	\path (gamma2) edge[-] (tau3);
	\path (gamma2) edge[-] (tau4);
	\path (gamma2) edge[-] (gamma2s);
	\path (gamma2) edge[-] (gamma2ss);
	\path (gamma) edge[-] (rho1);
	\path (gamma) edge[-] (rho2);
	\path (gamma) edge[-] (rho3);
	\path (rho1) edge[-] (rho1s);
	\path (rho2) edge[-] (rho2s);
	\path (rho3) edge[-] (rho3s);
	\path (rho1) edge[-] (rho1ss);
	\path (rho2) edge[-] (rho2ss);
	\path (rho3) edge[-] (rho3ss);
	%
	%
	\node (rhoe) at (-3.5,5) {{Simulated by $\mc{L}$}};
	%
	\draw [decorate,decoration={brace,amplitude=10pt}]
	(-4.3,4.3) -- (-2.7,4.3);
	\end{tikzpicture}
\end{center}
Now at the next step of extending the tree we need to extend $\gamma_1^*$, $\gamma_1^{**}$, $\gamma_2^*$, and $\gamma_2^{**}$, and make sure that we extend $\tau$ to $e$-split with these extensions and with extensions of the $\rho^*$; but in doing so we will introduce further extensions that do not $e$-split. So at no finite step do we get that everything $e$-splits with each other, but in the end every pair of paths $e$-splits.

\section{Multiple requirements and outcomes}\label{sec:three}

Order the requirements $\mathcal{M}_e$, $\mathcal{L}_e$, and $\mathcal{P}_e$ as follows, from highest priority to lowest:
\[ \mc M_0 > \mc L_0 > \mc P_0 > \mc M_1 > \mc L_1 > \mc P_1 > \mc M_2 > \cdots .\]
Each requirement has various possible outcomes:
\begin{itemize}
	\item A requirement $\mathcal{M}_e$ can either build an $e$-splitting tree, or it can build a tree forcing that $\Phi_e$ is either partial or computable. In the former case, when $\mathcal{M}_e$ builds an $e$-splitting tree, we say that $\mathcal{M}_e$ has the infinitary outcome $\infty$. In the latter case, there is a node $\sigma$ above which we do not find any more $e$-splittings. We say that $\mathcal{M}_e$ has the finitary outcome $\sigma$.
	\item A requirement $\mathcal{L}_{\la e,i\ra}$ can either have the simulation $\Xi$ of $\Psi_{e}$ be equal to $R_{i}$ whenever they are both defined, or $\mathcal{L}_{\la e,i\ra}$ can force $A$ to extend a node $\sigma$, with $\Psi_e^\sigma(x) \neq R_i(x)$ for some $x$. In the first case, we say that $\mathcal{L}_{\la e,i\ra}$ has the infinitary outcome $\infty$, and in the latter case we say that $\mathcal{L}_{\la e,i\ra}$ has the finitary outcome $\sigma$.
	\item A requirement $\mathcal{P}_e$ chooses an initial segment $\sigma$ of $A$ that ensures that $A$ is not equal to the $e$th c.e.\ set $W_e$. This node $\sigma$ is the outcome of $\mathcal{P}_e$.
\end{itemize}
The \textit{tree of outcomes} is the tree of finite strings $\eta$ where $\eta(3e)$ is an outcome for $\mathcal{M}_e$, $\eta(3e+1)$ is an outcome for $\mathcal{L}_e$, and $\eta(3e+2)$ is an outcome for $\mathcal{P}_e$, and so that $\eta$ satisfies the coherence condition described below. For convenience, given a requirement $\mc{R}$ we write $\eta(\mc{R})$ for the outcome of $\mc{R}$ according to $\eta$: $\eta(\mc{M}_e) = \eta(3e)$, $\eta(\mc{L}_e) = \eta(3e+1)$, and $\eta(\mc{P}_e) = \eta(3e+2)$. Using this notation allows us to avoid having to remember exactly how we have indexed the entries of $\eta$. Given a requirement $\mc{R}$, we say that $\eta$ is a \textit{guess by $\mc{R}$} if $\eta$ has an outcome for each requirement of higher priority than $\mc{R}$, e.g.\ a guess by $\mathcal{L}_e$ is a string $\eta$ of length $3e+1$ with \[ \eta =  \la \eta(\mc{M}_0),\eta(\mc{L}_0),\eta(\mc{P}_0),\ldots,\eta(\mc{M}_{e-1}),\eta(\mc{L}_{e-1}),\eta(\mc{P}_{e-1}),\eta(\mc{M}_e) \ra.\]
Not all of these guesses are internally consistent; the guesses which might actually be the true outcomes will satisfy other conditions, for example the nodes in the guess will actually be on the trees built by higher priority requirements.

Each requirement $\mc{R}$ will have an \textit{instance} $\mc{R}^\eta$ operating under each possible guess $\eta$ at the outcomes of the lower priority requirements. Each of these instances of a particular requirement will be operating independently, but the actions of all of the instances of all the requirements will be uniformly computable. So for example each instance $\mathcal{M}_e^\eta$ of a minimality requirement will be trying to build an $e$-splitting tree, using $\eta$ to guess at whether or not $\mathcal{M}_{e-1}$ successfully built an $e$-splitting tree, how $\mathcal{L}_{e-1}$ was satisfied, and the node chosen by $\mathcal{P}_{e-1}$ as an initial segment of $A$. The instances of different requirements will not be completely independent; for example, a requirement $\mathcal{M}_e^\eta$ must take into account all of the lower priority requirements $\mathcal{L}_d^\nu$ for $d \geq e$, $\nu \succ \eta$.

One can think of the argument as a forcing argument except that there are some (effective) interactions between the conditions. In a standard forcing construction to build a minimal degree, for each requirement $\mathcal{M}_e$, after forcing the outcomes of the lower priority requirements, we decide non-uniformly whether we can find an $e$-splitting tree $T_e$, or whether there is a node $\sigma$ which has no $e$-splitting tree extending it. The tree $T_e$ is in some sense built after deciding on the outcomes of the previous requirements. What we will do is attempt to build, for each guess $\eta$ of $\mathcal{M}_e$ at the outcomes of the lower priority requirements, an $e$-splitting tree $T_e^\eta$; and then we will, at the end of the construction, choose one instance $\mc{M}_e^\eta$ of $\mathcal{M}_e$ to use depending on the outcomes of the lower priority requirements, and then we use the tree $T_e^\eta$ built by that instance. All of these trees $T_e^\eta$ were already built before we started determining the outcomes of the requirements.

There is only one instance $\mathcal{M}_0^\varnothing$ of $\mathcal{M}_0$, since there are no higher priority requirements. After the construction, we will ask $\mc{M}^\varnothing_0$ what its outcome was. We then have an instance of $\mathcal{L}_0$ which guessed this outcome for $\mathcal{M}_0$, and we ask this instance what its outcome was. This gives us an instance of $\mathcal{P}_0$ that guessed correctly, and so on. So at the end, we use only one instance of each requirement, and follow whatever that instance did.

\bigskip{}

We now need to consider in more detail the interactions between the requirements. We saw in the previous section that an $\mathcal{M}$ requirement must take into account lower priority $\mathcal{L}$ requirements. In the full construction, we will have not only many different lower priority $\mathcal{L}$ requirements, but also many different instances of each one that the $\mathcal{M}$ requirement must take into account.

Consider three requirements, $\mc{M}$ of highest priority, $\mc{L}_0$ of middle priority, and $\mc{L}_1$ of lowest priority. Suppose that both $\mc{L}_0$ and $\mc{L}_1$ correctly guess that $\mc{M}$ has the infinitary outcome, building a splitting tree. As described before, when we extend $\gamma$, we get a picture as follows (ignoring $\mc{L}_1$ for now):
\begin{center}
	\begin{tikzpicture}[-,>=stealth',shorten >=1pt,shorten <=1pt, auto,node
	distance=2cm,thick,every loop/.style={<-,shorten <=1pt}]

	\node (gamma) at (0,1) {{$\gamma$}};
	
	\node (gamma1) at (-2,3) {{$\gamma_1$}};
	\node (gamma2) at (-1,3) {{$\gamma_2$}};
	\node (rho1) at (0,3) {{$\rho_1$}};
	\node (rho2) at (1,3) {{$\rho_2$}};
	\node (rho3) at (2,3) {{$\rho_3$}};
	\node (dots) at (3,3) {{$\cdots$}};
	
	\path (0,0) edge[-] (gamma);
	%
	\path (gamma) edge[-] (gamma1);
	\path (gamma) edge[-] (gamma2);
	\path (gamma) edge[-] (rho1);
	\path (gamma) edge[-] (rho2);
	\path (gamma) edge[-] (rho3);
	%
	%
	\node (rhoe) at (-1.5,4) {{Simulated by $\mc{L}_0$}};
	%
	\draw [decorate,decoration={brace,amplitude=10pt}]
	(-2.3,3.3) -- (-0.7,3.3);
	\end{tikzpicture}
\end{center}
Now $\mc{L}_1$ guesses at the outcome of $\mc{L}_0$, and if for example $\mc{L}_0$ has the finitary outcome $\rho_1$, and $\mc{L}_1$ guesses this, then $\mc{L}_1$ must simulate computations extending $\rho_1$.
\begin{center}
	\begin{tikzpicture}[-,>=stealth',shorten >=1pt,shorten <=1pt, auto,node
	distance=2cm,thick,every loop/.style={<-,shorten <=1pt}]

	\node (gamma) at (0,1) {{$\gamma$}};
	
	\node (gamma1) at (-2,3) {{$\gamma_1$}};
	\node (gamma2) at (-1,3) {{$\gamma_2$}};
	\node (rho1) at (0,3) {{$\rho_1$}};
	\node (rho2) at (1,3) {{$\rho_2$}};
	\node (rho3) at (2,3) {{$\rho_3$}};
	\node (dots) at (3,3) {{$\cdots$}};
	
	\path (0,0) edge[-] (gamma);
	%
	\path (gamma) edge[-] (gamma1);
	\path (gamma) edge[-] (gamma2);
	\path (gamma) edge[-] (rho1);
	\path (gamma) edge[-] (rho2);
	\path (gamma) edge[-] (rho3);
	%
	%
	\node (rhoe) at (-1.5,4) {{Simulated by $\mc{L}_0$}};
	%
	\draw [decorate,decoration={brace,amplitude=10pt}]
	(-2.3,3.3) -- (-0.7,3.3);
	
	\node (rhoe) at (0,5) {{Simulated by $\mc{L}_1$}};
	%
	\draw [decorate,decoration={brace,amplitude=10pt}]
	(-0.3,4.3) -- (0.3,4.3);
	\end{tikzpicture}
\end{center}
Now in the next step we found extensions $\rho_1^*,\rho_1^{**}$ of $\rho_1$ that split with the other extensions. Before, we could simply extend $\rho_1$ to $\rho_1^*$ and $\rho_1^{**}$. Now, while we are looking for the extensions, $\mc{L}_1$ might simulate other computations, say $\tau_1,\tau_2,\ldots$, extending $\rho_1$. We cannot remove these from the tree. So as before, $\mc{L}_1$ stops simulating them:
\begin{center}
	\begin{tikzpicture}[-,>=stealth',shorten >=1pt,shorten <=1pt, auto,node
	distance=2cm,thick,every loop/.style={<-,shorten <=1pt}]

	\node (gamma) at (0,1) {{$\gamma$}};
	
	\node (tau1) at (-5,4) {{}};
	\node (tau2) at (-6,4) {{}};
	\node (tau3) at (-1,4) {{}};	
	\node (tau4) at (-2,4) {{}};
	
	\node (gamma1) at (-4,3) {{$\gamma_1$}};
	\node (gamma2) at (-3,3) {{$\gamma_2$}};
	\node (gamma1s) at (-4.25,4) {{$\gamma_1^*$}};
	\node (gamma2s) at (-2.75,4) {{$\gamma_2^{**}$}};
	\node (gamma1ss) at (-3.75,4) {{$\gamma_1^{**}$}};
	\node (gamma2ss) at (-3.25,4) {{$\gamma_2^{*}$}};
	\node (rho1) at (1,3) {{$\rho_1$}};
	\node (rho1s) at (-0.5,4) {{$\rho_1^*$}};
	\node (rho1ss) at (0.5,4) {{$\rho_1^{**}$}};
	\node (rho1st) at (1.5,4) {{$\tau_1$}};
	\node (rho1st2) at (2.5,4) {{$\tau_2$}};
	\node (rho2) at (3,3) {{$\rho_2$}};
	\node (rho3) at (4,3) {{$\rho_3$}};
	\node (dots) at (5,3) {{$\cdots$}};
	
	\path (0,0) edge[-] (gamma);
	%
	\path (gamma) edge[-] (gamma1);
	\path (gamma) edge[-] (gamma2);
	\path (gamma1) edge[-] (gamma1s);
	\path (gamma1) edge[-] (gamma1ss);
	\path (gamma1) edge[-] (tau1);
	\path (gamma1) edge[-] (tau2);
	\path (gamma2) edge[-] (tau3);
	\path (gamma2) edge[-] (tau4);
	\path (gamma2) edge[-] (gamma2s);
	\path (gamma2) edge[-] (gamma2ss);
	\path (gamma) edge[-] (rho1);
	\path (gamma) edge[-] (rho2);
	\path (gamma) edge[-] (rho3);
	\path (rho1) edge[-] (rho1s);
	\path (rho1) edge[-] (rho1ss);
	\path (rho1) edge[-] (rho1st);
	\path (rho1) edge[-] (rho1st2);
	%
	%
	\node (rhoe) at (-3.5,5) {{Simulated by $\mc{L}_0$}};
	%
	\draw [decorate,decoration={brace,amplitude=10pt}]
	(-4.3,4.3) -- (-2.7,4.3);
	
	\node (rhoe) at (0,6) {{Simulated by $\mc{L}_1$}};
	%
	(-4.3,5.3) -- (-2.7,5.3);
	
	\draw [decorate,decoration={brace,amplitude=10pt}]
	(-0.7,5.3) -- (0.7,5.3);
	\end{tikzpicture}
\end{center}
Now we have arrived to the point where computations above $\tau_1$ and $\tau_2$ are no longer being simulated by any $\mc{L}$ requirements, and when we extend $\gamma_1^*$, $\gamma_1^{**}$, $\gamma_2^*$, and $\gamma_2^{**}$ we can find extensions of $\tau_1$ and $\tau_2$ which $e$-split with these extensions (as well as with extensions of $\rho_2^*$, $\rho_3^*$, etc.).

It was important here that we were only dealing with finitely many lowness requirements at a time, because eventually we arrived at a point where parts of the tree were no longer being simulated by any lowness requirement. In the full construction, there will be some important bookkeeping to manage which lowness requirements are being considered at any particular time, so that we sufficiently delay the introduction of new lowness requirements. (This will be accomplished by giving each element of the tree a \textit{scope} in the next section.)

\medskip

This is not the only case where one $\mc{L}$ requirement needs to simulate computations through nodes not simulated by another computation. We will introduce a relation $\mathcal{L}_{d_1}^{\nu_1} \rightsquigarrow \mc{L}_{d_2}^{\nu_2}$ which means that $\mathcal{L}_{d_1}^{\nu_1}$ must simulate computations above nodes that are kept on the tree to be the finitary outcome of $\mc{L}_{d_2}^{\nu_2}$, but which are not simulated by $\mc{L}_{d_2}^{\nu_2}$. We suggest reading $\mathcal{L}_{d_1}^{\nu_1} \rightsquigarrow \mc{L}_{d_2}^{\nu_2}$ as ``$\mathcal{L}_{d_1}^{\nu_1}$ \textit{watches} $\mc{L}_{d_2}^{\nu_2}$''. Given the previous example, if $d_1 < d_2$ and $\nu_1 \prec \nu_2$ then we will have $\mathcal{L}_{d_1}^{\nu_1} \rightsquigarrow \mc{L}_{d_2}^{\nu_2}$, but there will be other, more complicated, cases where $\mathcal{L}_{d_1}^{\nu_1} \rightsquigarrow \mc{L}_{d_2}^{\nu_2}$.

\medskip

Now consider the case of two $\mathcal{M}$ requirements $\mathcal{M}_0$ and $\mathcal{M}_1$ which are of higher priority than two $\mc{L}$ requirements $\mc{L}_0$ and $\mc{L}_1$. Suppose that $\mathcal{M}_0$ successfully builds a $0$-splitting tree $T_0$, and suppose that $\mathcal{L}_0$ and $\mc{L}_1$ both correctly guesses this. Then $\mc{M}_1$ is trying to build a $1$-splitting subtree of $T_0$. Suppose that:
\begin{enumerate}
	\item $\mathcal{L}_0$ guesses that $\mathcal{M}_1$ builds a 1-splitting subtree of $T_0$. If $\mathcal{M}_1$ succeeds at building a $1$-splitting subtree $T_1$ of $T_0$, $\mc{L}_0$ must be able to simulate computations on $T_1$. On the other hand, when $\mc{M}_0$ built $T_0$, there were some nodes of $T_0$ which $\mc{L}_0$ did not simulate. So to make sure that $\mc{L}_0$ simulates computations through $T_1$, $\mc{M}_1$ should look for 1-splits through nodes of $T_0$ that are simulated by $\mathcal{L}_0$.
	
	\item $\mathcal{L}_1$ guesses that $\mathcal{M}_1$ fails to build a 1-splitting tree. If this guess is correct, then there is some node $\sigma$ above which $\mathcal{M}_1$ fails to find a 1-split. $\mathcal{M}_1$ defines a subtree $T_1'$ of $T_0$ containing no 1-splits. The tree $T_1'$ is not being defined dynamically---it is just the subtree of $T_0$ above $\sigma$ where $\mathcal{M}_1$ looked for (and failed to find) a 1-split. 
	
	Recall that $\mathcal{M}_0$ acted specifically to keep simulations computed by $\mathcal{L}_1$ on the tree $T_0$. The tree $T_1'$ also needs to keep these computations. Putting everything together, this means that we should be looking for 1-splits through the nodes which were kept on $T_0$ for the sake of $\mathcal{L}_1$; these are nodes which are not simulated by $\mathcal{L}_1$, but which might be used for the finitary outcomes of $\mc{L}_1$.
\end{enumerate}
So we see from (1) that $\mc{M}_1$ should look for 1-splits through nodes that are simulated by $\mc{L}_0$, and from (2) that $\mc{M}_1$ should look for 1-splits through the nodes which are kept on $T_0$ for the sake of $\mc{L}_1$, but which are not simulated by $\mc{L}_1$. This suggests that $\mc{L}_0$ should simulate the nodes which are kept on $T_0$ for the sake of $\mc{L}_1$, so that $\mathcal{L}_0 \rightsquigarrow \mc{L}_1$.

\medskip

Given a string of outcomes $\nu$, define $\Delta(\nu) \in \{f,\infty\}^{< \omega}$ to be the string
\[ \la \nu(\mc M_0),\nu(\mc M_1),\nu(\mc M_2),\ldots \ra\]
except that we replace any entry which is not $\infty$ with $f$. We can put an ordering $\precsim$ on these using the lexicographic order with $\infty < f$. Suppose that $\nu_1$ and $\nu_2$ are guesses by $\mathcal{L}_{d_1}$ and $\mathcal{L}_{d_2}$ respectively at the outcomes of the higher priority requirements. Define $\mathcal{L}_{d_1}^{\nu_1} \rightsquigarrow \mathcal{L}_{d_2}^{\nu_2}$ if and only if $\Delta(\nu_1) \prec \Delta(\nu_2)$ in the ordering just defined.

\section{Construction}

\subsection{Procedure for constructing splitting trees}

Given the tree $T_{e-1}$ constructed by an instance of $\mc{M}_{e-1}$, we will describe the (attempted) construction by $\mc{M}_{e}$ of an $e$-splitting subtree $T$. This construction will be successful if there are enough $e$-splittings in $T_{e-1}$. We write $T[n]$ for the tree up to and including the $n$th level.

Let $\xi$ be the guess by the particular instance of $\mc{M}_e$ at the outcomes of lower priority requirements. This guess $\xi$ determines an instance of $\mc{M}_{e-1}$ compatible with the given instance of $\mc{M}_e$, and $\xi$ also includes a guess at the outcome of $\mc{M}_{e-1}$. The tree $T_{e-1}$ inside of which we build $T$ depends on the outcome of $\mc{M}_{e-1}$, i.e., if $\xi$ guesses that $\mc{M}_{e-1}$ builds an $(e-1)$-splitting tree then $T_{e-1}$ is this tree, and if $\xi$ guesses that $\mc{M}_{e-1}$ fails to do so, then $T_{e-1}$ is the tree with no $(e-1)$-splits witnessing this failure. (This will all be made more precise in the next section; for now we simply describe the procedure of building $T$.) If we are successful in building $T$ then $\mc{M}_e$ will have the infinitary outcome. We will also  leave for later the description of the subtree of $T_{e-1}$ that we use for the finitary outcome. In this section, we just define the procedure \texttt{Procedure($e$,$\rho$,$T_{e-1}$)} for building an $e$-splitting tree $T$ with root $\rho$ in $T_{e-1}$. (The procedure for building $T$ will not actually use the guess $\xi$, other than to determine what the tree $T_{e-1}$ is, but it will be helpful to refer to $\xi$ in the discussion.)

\medskip

When building $T$, $\mc{M}_e$ must take into account instances of lowness requirements $\mc{L}^\eta_d$ with $d \geq e$ and $\eta$ extending $\xi \concat \infty$. (Since $T$ is being built under the assumption that $\mc{M}_e$ has the infinitary outcome, it only needs to respect lowness requirements that guess that this is the case.) When considering $\mc{L}_d^\eta$ while building $T$, we will only need to know the guesses by $\mc{L}_d^\eta$ at the outcomes of the $\mc{M}$ requirements $\mc{M}_{e+1},\ldots,\mc{M}_d$ of lower priority than $\mc{M}_e$ but higher priority than $\mc{L}_d$ (because the only lowness requirements we need to consider have the same guesses at the requirements $\mc{M}_1,\ldots,\mc{M}_e$), and moreover we will only care about whether the guess is the infinitary outcome or a finitary outcome. 

The tree $T$ will be a \textit{labeled tree}, as follows. Figure \ref{figone} below may help the reader to understand the structure of the tree. Each node of the tree $T$ will be labeled to show which lowness requirements are simulating it, and which lowness requirements are using it for the finitary outcome (of diagonalizing against a computable set $R$). Each node $\sigma$ is given a \textit{scope} $\scope(\sigma)$ and a \textit{label} $\ell(\sigma)$. The scope is a natural number $\geq 0$, and the label is an element of
\[ \Labels = \{f,\infty\}^{< \omega} \cup \{ \top \}.\]
The scope represents the number of lowness requirements that are being considered at this level of the tree, i.e., if the scope of a node is $n$, then we are considering lowness requirements $\mc{L}_e,\ldots,\mc{L}_{e+n}$. If a node $\sigma$ has scope $n$, then the label of $\sigma$ will be an element of
\[ \Labels_n = \{f,\infty\}^{\leq n} \cup \{ \top \}.\]
Note that the label might have length less than $n$, and might even be the empty string; an element of $\Labels_n$ of length $m$ corresponds to a guess by $\mc{L}_{e+m}$. We order the labels lexicographically with $\infty < f$, and with $\top$ as the greatest element. E.g., in $\Labels_2$, we have
\[\top \succ ff \succ f \infty \succ f \succ \infty f \succ \infty \infty \succ \infty \succ \varnothing.\]
We use $\preceq$ for this ordering. We often think of this ordering as being an ordering $\preceq_n$ on $\Labels_n$, and write $\pred_n(\eta)$ for the predecessor of $\eta$ in $\Labels_n$. Though $\Labels$ is well-founded, it does not have order type $\omega$, and so we need to restrict to $\Labels_n$ to make sense of the predecessor operator.

We think of elements of $\{f,\infty\}^n$ as guesses by $\mc{L}_{e+n}$ at the outcomes of $\mc{M}_{e+1},\ldots,\mc{M}_{e+n}$, and $\top$ is just an element greater than all of the guesses. Given an instance $\mc{L}_{e+n}^\eta$ of a lowness requirement of lower priority than $\mc{M}_e$, we write $\Delta_{> e}(\eta)$ for 
\[ \la \Delta(\eta)(e+1),\Delta(\eta)(e+2),\ldots,\Delta(\eta)(e+n) \ra \in \{f,\infty\}^{n}.\]
Recall that $\Delta(\eta)$ just replaces the entries of $\eta$ by $f$ or $\infty$; $\Delta_{> e}(\eta)$ is the string which consists of the guesses of $\eta$ at the outcomes (finitary $f$ or infinitary $\infty$) of $\mc{M}_{e+1},\ldots,\mc{M}_d$. Given two requirements $\mc{L}_{d_1}^{\nu_1}$ and $\mc{L}_{d_2}^{\nu_2}$ with $\nu_1,\nu_2$ extending $\xi \concat \infty$, $\mc{L}_{d_1}^{\nu_1} \rightsquigarrow \mc{L}_{d_2}^{\nu_2}$ if and only if $\Delta_{> e}(\nu_1) \prec \Delta_{> e}(\nu_2)$ lexicographically.

The label $\ell(\sigma^*)$ means that $\sigma^*$ was kept on the tree in order to preserve simulated computations by instances of lowness requirements $\mc{L}_{e+|\ell(\sigma^*)|}^\eta$ with $\Delta_{>e}(\eta) = \ell(\sigma^*)$. So if $\mc{L}_d^\eta$ is an instance of a lowness requirement, and $\sigma^*$ is the child of $\sigma$ on $T$, with $d \leq e+\scope(\sigma^*)$, then:
\begin{itemize}
	\item if $\Delta_{> e}(\eta) \prec \ell(\sigma^*)$, then if $\mc{L}_d^\eta$ simulates computations through $\sigma$, it also simulates computations through  $\sigma^*$; and	
	\item if $\Delta_{> e}(\eta) = \ell(\sigma^*)$, then $\mc{L}_d^\eta$ does not simulate computations through $\sigma^*$, but $\sigma^*$ might be used for the finitary outcome of $\mc{L}_d^\eta$.
\end{itemize}
A node $\sigma^*$ with $\ell(\sigma^*) = \top$ is simulated by every lowness requirement that simulates its parent. Note that we always say that $\sigma^*$ is simulated if its parent $\sigma$ is simulated, rather than just saying that $\sigma^*$ is simulated. This is because when we apply $\Delta_{>e}$ to the guesses $\eta$ of different instances of $\mc{L}_d^\eta$, we lump together many instances with different values for the finitary outcomes. Some of these instances may not simulate computations through $\sigma^*$ because one of the higher priority requirements forces $A$ to extend a node incompatible with $\sigma$, while other instances may be forced to simulate computations through $\sigma^*$ because of these higher priority requirements. For a particular instance of a lowness requirement, there is some initial segment of $A$ determined by the higher priority requirements; think of the labels $\ell$ as applying above this initial segment.

\medskip{}

Certain levels of the tree $T_e$ will be called \textit{expansionary levels}. From the $n$th expansionary level of the tree on, we will begin to consider requirements $\mc{L}_{e+1},\ldots,\mc{L}_{e+n}$, using guesses from at the outcomes of $\mc{M}_{e+1},\ldots,\mc{M}_{e+n}$. (Recall that the scope of a node represents the lowness requirements that it considers.) The nodes at the $n$th expansionary level or higher, but below the $n+1$st expansionary level, will be said to be in the \textit{$n$th strip}. We write $e_1,e_2,e_3,\ldots$ for the expansionary levels. The expansionary levels are defined statically by $e_1 = 0$ and
\[ e_{i+1} = e_i + 2^{i+5}.\]
One might expect that if $\sigma$ is in the $n$th strip, then $\scope(\sigma)$ will be $n$. This will not quite be the case; an expansionary level is where we start considering more requirements, but this might not happen immediately for particular nodes. Instead, if $\sigma$ is in the $n$th strip, we will have $\scope(\sigma) = n-1$ or $\scope(\sigma) = n$. The scope of a child will always be at least the scope of its parent. We say that $\sigma^*$, a child of $\sigma$, is an \textit{expansionary node} if $\scope(\sigma^*) > \scope(\sigma)$. We say that an expansionary node $\sigma^*$ is an \textit{$n$th expansionary node} if $\scope(\sigma^*) = n$. Along any path in the tree, there is one expansionary node for each $n$; it is clear from the fact that the lengths of labels are non-decreasing along paths that there is at most one, and we will show in Lemma \ref{lem:expansionary} that there is at least one. Moreover, the $n$th expansionary node along a path will occur in the $n$th strip.

Suppose that $\sigma^*$ is a child of $\sigma$, with $\scope(\sigma) = n$. If $\sigma^*$ is an $(n+1)$st expansionary node, then we will have $\scope(\sigma^*) = n+1$ and $\ell(\sigma^*) = \top$. Otherwise, $\scope(\sigma^*) = \scope(\sigma) = n$ and we will have either $\ell(\sigma^*) = \ell(\sigma)$ or $\ell(\sigma^*) \prec \ell(\sigma)$. When the label stays the same (or when $\sigma^*$ is an expansionary node), we say that $\sigma^*$ is a \textit{main child} of $\sigma$. Each node $\sigma$ will have exactly two main children, which will $e$-split with each other. Otherwise, if  $\ell(\sigma^*) \prec \ell(\sigma)$, then we say that $\sigma^*$ is a \textit{secondary child} of $\sigma$.

\begin{sidewaysfigure}[pt]
	\begin{center}
	\begin{tikzpicture}[level 1/.style={sibling distance=8cm},
	level 2/.style={sibling distance=2cm}, 
	level 3/.style={sibling distance=0.7cm},
	level 4/.style={sibling distance=0.5cm},
	level 6/.style={sibling distance=0.9cm},
	level 7/.style={sibling distance=0.7cm},
	level 8/.style={sibling distance=0.7cm},]
	\node {$\top$}
	child { node {$\top$} edge from parent[thick,solid]
		child { node {$\top$} edge from parent[thick,solid]
			child { node {$\top$} edge from parent[thick,solid]}
			child { node {$\top$} edge from parent[thick,solid]}
			child { node {$f$} edge from parent[dashed]}}
		child { node {$\top$} edge from parent[thick,solid]
			child { node {$\top$} edge from parent[thick,solid]}
			child { node {$\top$} edge from parent[thick,solid]
				child { node[draw,circle,color=black,inner sep=1.5pt,solid,thin] {$\top$} edge from parent[thick,solid]}
				child { node[draw,circle,color=black,inner sep=1.5pt,solid,thin] {$\top$} edge from parent[thick,solid]}
				child { node {$f$} edge from parent[dashed]
					child { node[draw,circle,color=black,inner sep=1.5pt,solid,thin] {$\top$} edge from parent[thick,solid]}
					child { node[draw,circle,color=black,inner sep=1.5pt,solid,thin] {$\top$} edge from parent[thick,solid]}
					child { node {$\infty$} edge from parent[dashed]
						child { node[draw,circle,color=black,inner sep=1.5pt,solid,thin] {$\top$} edge from parent[thick,solid]
							child[draw,circle,color=black,inner sep=1.5pt,solid,thin] { node {$\top$} edge from parent[thick,solid]}
							child[draw,circle,color=black,inner sep=1.5pt,solid,thin] { node {$\top$} edge from parent[thick,solid]}
							child { node {$ff$} edge from parent[dashed]}}
						child { node[draw,circle,color=black,inner sep=1.5pt,solid,thin] {$\top$} edge from parent[thick,solid]}
						child { node {$\varnothing$} edge from parent[dashed]
							child { node[draw,circle,color=black,inner sep=1.5pt,solid,thin] {$\top$} edge from parent[thick,solid]}
							child { node[draw,circle,color=black,inner sep=1.5pt,solid,thin] {$\top$} edge from parent[thick,solid]}}}}}
			child { node {$f$} edge from parent[dashed]}}
		child { node {$f$} edge from parent[dashed]
			child { node {$f$} edge from parent[thick,solid]}
			child { node {$f$} edge from parent[thick,solid]}
			child { node {$\infty$} edge from parent[dashed]}}}
	child { node {$\top$} edge from parent[thick,solid]
		child { node {$\top$} edge from parent[thick,solid]
			child { node {$\top$} edge from parent[thick,solid]}
			child { node {$\top$} edge from parent[thick,solid]}
			child { node {$f$} edge from parent[dashed]}}
		child { node {$\top$} edge from parent[thick,solid]
			child { node {$\top$} edge from parent[thick,solid]}
			child { node {$\top$} edge from parent[thick,solid]
				child { node[draw,circle,color=black,inner sep=1.5pt,solid,thin] {$\top$} edge from parent[thick,solid]}
				child { node[draw,circle,color=black,inner sep=1.5pt,solid,thin] {$\top$} edge from parent[thick,solid]
					child { node {$\top$} edge from parent[thick,solid]}
					child { node {$\top$} edge from parent[thick,solid]}
					child { node {$ff$} edge from parent[dashed]
						child { node {$ff$} edge from parent[thick,solid]}
						child { node {$ff$} edge from parent[thick,solid]}
						child { node {$f\infty$} edge from parent[dashed]
							child { node {$f\infty$} edge from parent[thick,solid]}
							child { node {$f\infty$} edge from parent[thick,solid]}
							child { node {$f$} edge from parent[dashed]
								child { node {$f$} edge from parent[thick,solid]}
								child { node {$f$} edge from parent[thick,solid]}
								child { node {$\infty f$} edge from parent[dashed]}}}}}
				child { node {$f$} edge from parent[dashed]}}
			child { node {$f$} edge from parent[dashed]}}
		child { node {$f$} edge from parent[dashed]
			child { node {$f$} edge from parent[thick,solid]}
			child { node {$f$} edge from parent[thick,solid]}
			child { node {$\infty$} edge from parent[dashed]}}}
	child { node {$f$} edge from parent[dashed]
		child { node {$f$} edge from parent[thick,solid]
			child { node {$f$} edge from parent[thick,solid]}
			child { node {$f$} edge from parent[thick,solid]}
			child { node {$\infty$} edge from parent[dashed]}}
		child { node {$f$} edge from parent[thick,solid]
			child { node {$f$} edge from parent[thick,solid]}
			child { node {$f$} edge from parent[thick,solid]}
			child { node {$\infty$} edge from parent[dashed]}}
		child { node {$\infty$} edge from parent[dashed]
			child { node {$\infty$} edge from parent[thick,solid]
				child { node[draw,circle,color=black,inner sep=1.5pt,solid,thin] {$\top$} edge from parent[thick,solid]}
				child { node[draw,circle,color=black,inner sep=1.5pt,solid,thin] {$\top$} edge from parent[thick,solid]}
				child { node {$\varnothing$} edge from parent[dashed]
					child { node[draw,circle,color=black,inner sep=1.5pt,solid,thin] {$\top$} edge from parent[thick,solid]}
					child { node[draw,circle,color=black,inner sep=1.5pt,solid,thin] {$\top$} edge from parent[thick,solid]}}}
			child { node {$\infty$} edge from parent[thick,solid]}
			child { node {$\varnothing$} edge from parent[dashed]
				child { node[draw,circle,color=black,inner sep=1.5pt,solid,thin] {$\top$} edge from parent[thick,solid]}
				child { node[draw,circle,color=black,inner sep=1.5pt,solid,thin] {$\top$} edge from parent[thick,solid]}}}};
		
		\node at (10,-9) {\begin{varwidth}{4cm}\footnotesize Nodes labeled $\varnothing$ have no secondary children.\end{varwidth}};
		
		\node[fill=white] at (-3.5,-1.5) {\begin{varwidth}{4cm}\footnotesize Since we have not yet passed the 2nd expansionary stage, main children are not 2nd expansionary nodes.\end{varwidth}};
		
		\node at (4,-7.5) {\begin{varwidth}{4cm}\footnotesize The scope of these nodes is now 2, so the labels can have length $\leq 2$. Along the secondary children, the labels keep decreasing.\end{varwidth}};
		
		\node at (-10,-8) {\begin{varwidth}{3cm}\footnotesize The expansionary nodes do not all occur at the same level, but we can see that each path eventually has one.\end{varwidth}};
		
		\node at (-9,-12) {\begin{varwidth}{5cm}\footnotesize These children labeled $\top$ are not expansionary because we have not reached the 3rd expansionary level.\end{varwidth}};
		
		\node at (-4,-5.7) {\small \textbf{2nd expansionary level.}};
		
		\draw[thick,dashed] (-12,-5.25) -- (12,-5.25);
	\end{tikzpicture}
	\end{center}
	\caption{An example of what the labeled tree might look like. We draw the main children with a solid line and the secondary children with a dashed line. Expansionary nodes are shown by a circle. To fit the tree onto a single page, we have made some simplifications: (a) we have omitted some nodes from the diagram; (b) we have assumed that each node has only one secondary child; and (c) we have assumed that the $2$nd expansionary level $e_2$ is much smaller than the value of 128 that we set in the construction. We show the 2nd expansionary level with the long horizontal dashed line.}\label{figone}
\end{sidewaysfigure}
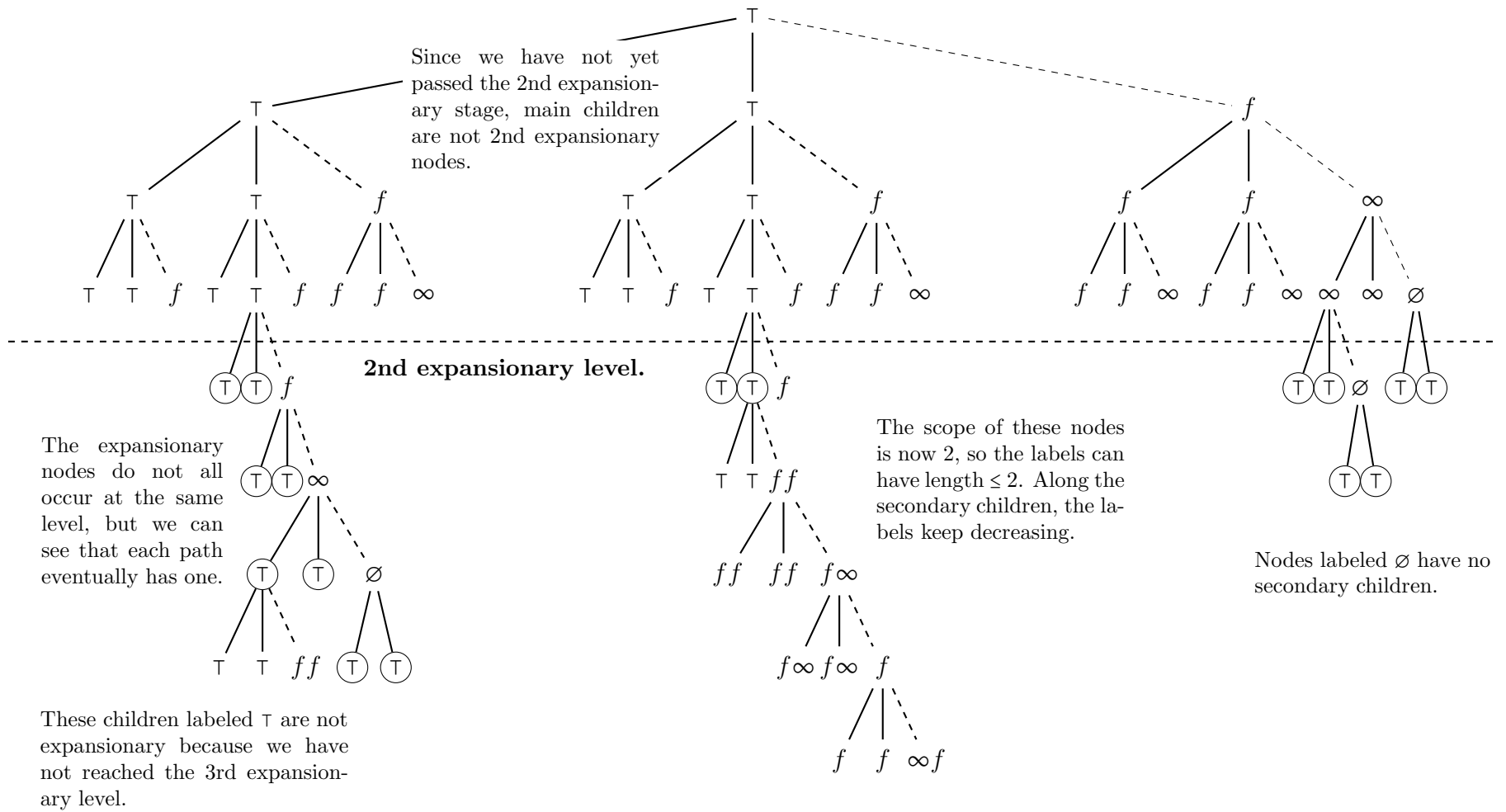

\medskip{}

Recall from the end of the previous section that when we look for splits, we do so through only a subtree of $T_{e-1}$. $T_{e-1}$ itself will be a labeled tree, with scopes $\scope_{e-1} \colon T_{e-1} \to \omega$ and labels $\ell_{e-1} \colon T_{e-1} \to \Labels$. Given $\tau$ on $T_{e-1}$, let $T_{e-1}\treeres{\tau}{\succ f}$ be the subtree of $T_{e-1}$ above $\tau$ (so that $\tau$ is the root node of $T_{e-1}\treeres{\tau}{\succ f}$) such that given $\sigma$ on $T_{e-1}\treeres{\tau}{\succ f}$, the children of $\sigma$ in $T_{e-1}\treeres{\tau}{\succ f}$ are the children $\sigma^*$ of $\sigma$ on $T_{e-1}$ with $\ell_{e-1}(\sigma^*) \succ f$. When we look for a splitting extension of $\tau$, we look through $T_{e-1}\treeres{\tau}{\succ f}$.

In general, for any tree $S$, node $\tau \in S$, and relation $R(\sigma^*)$ (or even a relation $R(\sigma^*,\sigma)$ between a node $\sigma^*$ and its parent $\sigma$), we can define the subtree $S \treeres{\tau}{R}$ as the tree with root node $\tau$, and such that whenever $\sigma \in S \treeres{\tau}{R}$, the children $\sigma^*$ of $\sigma$ on $S \treeres{\tau}{R}$ are exactly the children $\sigma^*$ of $\sigma$ on $S$ such that $R(\sigma^*)$ holds (or $R(\sigma^*,\sigma)$). We will use this notation for the trees $S \treeres{\tau}{\succ \eta}$ and $S \treeres{\tau}{\succeq \eta}$ for $\eta \in \Labels$, and $S \treeres{\tau}{main}$ where $main(\sigma^*,\sigma)$ is the relation of being a main child. So for example $T_{e-1} \treeres{\sigma}{main}[s]$ is the tree consisting of main children of main children of... main children of $\sigma$.

\medskip

The input tree $T_{e-1}$ will have similar properties to those described above. We say that a finitely branching tree $T_{e-1}$ with labels $\ell_{e-1}$ and scopes $\scope_{e-1}$ is \textit{admissible} if:
\begin{enumerate}
	\item Each $\sigma \in T_{e-1}$ has two main children $\sigma^*$ and $\sigma^{**}$ with $\ell_{e-1}(\sigma^*) \succeq \ell_{e-1}(\sigma)$ and $\ell_{e-1}(\sigma^{**}) \succeq \ell_{e-1}(\sigma)$.
	\item If $\sigma^*$ is a child of $\sigma$, then $\scope_{e-1}(\sigma^*) \geq \scope_{e-1}(\sigma)$.
	\item For each $n$, each path through $T_{e-1}$ contains a node $\sigma$ with $\ell_{e-1}(\sigma) = \top$ and $\scope_{e-1}(\sigma) \geq n$.
\end{enumerate}

\bigskip

We are now ready to describe the procedure for constructing splitting trees.

\bigskip

\noindent \texttt{Procedure($e$,$\rho$,$T_{e-1}$):}\\
\textit{Input:} A value $e \geq 0$, an admissible labeled tree $T_{e-1}$ with labels $\ell_{e-1}(\cdot)$ and scopes $\scope_{e-1}(\cdot)$, and a node $\rho$ on $T_{e-1}$ with $\ell_{e-1}(\rho) = \top$. \\
\textit{Output:} A possibly partial labeled $e$-splitting tree $T$, built stage-by-stage.

\smallskip

\noindent \textit{Construction.} To begin, the root node of $T$ is $\rho$ with $\scope(\rho) = 1$ and $\ell(\rho) = \top$. This is the $0$th level of the tree, $T[0]$. At each stage of the construction, if we have so far built $T$ up to the $n$th level $T[n]$, we try to add an additional $n+1$st level.

At stage $s$, suppose that we have defined the tree up to and including level $n$, and the last expansionary level $\leq n$ was $e_t$. Look for a length $l$ such that for each leaf $\sigma$ of $T[n]$, there is an extension $\sigma'$ of $\sigma$ with $\sigma' \in T_{e-1} \treeres{\sigma}{main}$ with $\ell_{e-1}(\sigma') = \top$, and there are extensions $\sigma^*$ and $\sigma^{**}$ of $\sigma$ on $T_{e-1} \treeres{\sigma'}{\succ  f}$, such that $\sigma^*$ and $\sigma^{**}$ are of length $l$, and such that all of these extensions pairwise $e$-split, i.e., for each pair of leaves $\sigma,\tau$ of $T[n]$, these extensions $\sigma^*$, $\sigma^{**}$, $\tau^*$, and $\tau^{**}$ all $e$-split with each other. (At stage $s$, we look among the first $s$-many extensions of these leaves, and we run computations looking for $e$-splits up to stage $s$. If we do not find such extensions, move on to stage $s+1$.)

If we do find such extensions, we will define $T[n+1]$ as follows. To begin, we must wait for $T_{e-1}[s]$ to be defined. In the meantime, we designate each $\sigma$ as \textit{waiting with main children $\sigma^*$ and $\sigma^{**}$}. (This designation is purely for the use of the simulations for lowness requirements, and has no effect on the resulting tree $T$.) While waiting, we still count through stages of the construction, so that after we resume the next stage of the construction will not be stage $s+1$ but some other stage $t > s$ depending on how long we wait. Once $T_{e-1}[s]$ has been defined, for each leaf $\sigma$ of $T[n]$, the children of $\sigma$ in $T[n+1]$ will be:
\begin{itemize}
	\item $\sigma^*$, with:
	\begin{itemize}
		\item if no predecessor of $\sigma$ on $T$ is $t$-expansionary, set $\scope({\sigma}^*) = \scope(\sigma)+1$ and $\ell({\sigma}^*) = \top$, or
		\item $\scope({\sigma}^*) = \scope(\sigma)$ and $\ell({\sigma}^*) = \ell(\sigma)$ otherwise;
	\end{itemize}
	\item $\sigma^{**}$, with:
	\begin{itemize}
		\item $\scope({\sigma}^{**}) = \scope(\sigma)+1$ and $\ell({\sigma}^{**}) = \top$ if no predecessor of $\sigma$ on $T$ is $t$-expansionary, or
		\item $\scope({\sigma}^{**}) = \scope(\sigma)$ and $\ell({\sigma}^{**}) = \ell(\sigma)$ otherwise;
	\end{itemize}
	\item If $\ell(\sigma) \succ \varnothing$, each other extension $\sigma^\dagger$ of $\sigma$ on $T_{e-1} \treeres{\sigma}{\succ \varnothing}[s]$ which is incompatible with $\sigma^*$ and $\sigma^{**}$ will be a child of $\sigma$ on $T$. Put $\scope(\sigma^\dagger) = \scope(\sigma)$. Define $\ell(\sigma^\dagger)$ as follows. Let $n = \scope(\sigma)$. Let $\eta \in \Labels_{n}$ be greatest such that $\sigma^\dagger \in T_{e-1} \treeres{\sigma}{\succeq \eta}$. Then:
	\begin{itemize}
		\item If $\eta$ is $\top$ or begins with $f$, then let $\ell(\sigma^\dagger) = \pred_{n}(\ell(\sigma))$.
		\item  If $\eta$ begins with $\infty$, say $\eta = \infty \eta^*$, then $\ell(\sigma^\dagger)$ will be the minimum, in $\Labels_{n}$, of $\pred_{n}(\ell(\sigma))$ and $\eta^*$.
	\end{itemize} 
	Note that $\pred_n(\ell(\sigma))$ exists because $\ell(\sigma) \succ \varnothing$.
\end{itemize}
The children ${\sigma}^*$ and ${\sigma}^{**}$ are the main children of $\sigma$, and the $\sigma^\dagger$, if they exist, are secondary children. This ends the construction at stage $s$.

\smallskip

\noindent \textit{End construction.}

\bigskip

We say that the procedure is \textit{successful} if it never gets stuck, and construct the $n$th level of the tree $T$ for every $n$. The next lemma is the formal statement that if $T_{e-1}$ has enough $e$-splits, then the procedure is successful.

\begin{lemma}\label{lem:split}
	Fix $e$, an admissible labeled tree $T_{e-1}$, and $\rho \in T_{e-1}$ with $\ell_{e-1}(\rho) = \top$. Suppose that for all $\sigma \in T_{e-1} \treeres{\rho}{\succ \varnothing}$ with $\ell_{e-1}(\sigma) = \top$,
	\begin{itemize}
		\item for all $n$, there is $\tau \in {T}_{e-1}\treeres{\sigma}{\succ f}$ such that $\Phi_e^\tau(n) \downarrow$, and
		\item there are $n$ and $\tau_1,\tau_2 \in {T}_{e-1}\treeres{\sigma}{\succ f}$ such that 
		\[ \Phi_e^{\tau_1} (n) \neq \Phi_e^{\tau_2}(n).\]
	\end{itemize}
	Then {\upshape\texttt{Procedure($e$,$\rho$,$T_{e-1}$)}} is successful.
\end{lemma}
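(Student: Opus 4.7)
The plan is to induct on $n$ to show that stage $n$ of the construction always completes; that is, given $T[n]$, the procedure produces $T[n+1]$. The base case $T[0] = \{\rho\}$ is immediate. For the inductive step, let $\sigma_1, \dots, \sigma_k$ be the leaves of $T[n]$; it suffices to exhibit the configuration the procedure searches for, namely intermediate nodes $\sigma_i' \in T_{e-1}\treeres{\sigma_i}{main}$ with $\ell_{e-1}(\sigma_i') = \top$ and extensions $\sigma_i^*, \sigma_i^{**} \in T_{e-1}\treeres{\sigma_i'}{\succ f}$ of some common length $l$ such that all of these $2k$ extensions pairwise $e$-split. Once produced, this configuration is witnessed at some stage $s$ of the procedure's search, completing stage $n$.

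First I would locate each $\sigma_i'$. The subtree of $T_{e-1}$ consisting of main children is $2$-branching (admissibility (1)) with no dead ends, and admissibility (3) applied to its paths guarantees that every path meets a $\top$-labeled node; König's lemma then bounds the depth at which the earliest such node appears, producing $\sigma_i'$. The same König argument will produce, whenever needed, a $\top$-labeled main descendant of any node in $T_{e-1}\treeres{\sigma_i'}{\succ f}$. Since $\ell_{e-1}(\sigma_i') = \top \succ \varnothing$, the second clause of the hypothesis applies at $\sigma_i'$ and yields initial candidates $\alpha_{i,1}, \alpha_{i,2} \in T_{e-1}\treeres{\sigma_i'}{\succ f}$ with $\Phi_e^{\alpha_{i,1}}(x_i) \neq \Phi_e^{\alpha_{i,2}}(x_i)$ for some witness $x_i$.

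The heart of the argument is adjusting the $2k$ candidates so that they pairwise $e$-split. I would process the finitely many pairs one at a time. Suppose $(\alpha,\beta)$ is a pair of candidates that does not yet $e$-split. Pass from $\alpha$ up along main children to a $\top$-labeled $\alpha'$, then invoke the second hypothesis at $\alpha'$ to obtain $\alpha_1, \alpha_2 \in T_{e-1}\treeres{\alpha'}{\succ f}$ with $\Phi_e^{\alpha_1}(x) \neq \Phi_e^{\alpha_2}(x)$ at some $x$. Analogously, passing from $\beta$ to a $\top$-labeled main descendant $\beta'$ and applying the first hypothesis at $\beta'$, extend $\beta$ to $\beta_+ \in T_{e-1}\treeres{\beta'}{\succ f}$ with $\Phi_e^{\beta_+}(x)\downarrow$. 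At least one of $\alpha_1, \alpha_2$, say $\alpha_1$, disagrees with $\beta_+$ at $x$, so $(\alpha_1,\beta_+)$ $e$-splits; replace $\alpha$ by $\alpha_1$ and $\beta$ by $\beta_+$ in the candidate list. The crucial point is that $\Phi_e^\gamma(y)\downarrow$ with its value is preserved under extending $\gamma$, so every previously arranged $e$-split remains valid after these replacements. Hence each round strictly increases the number of $e$-splitting pairs, and the process terminates after finitely many rounds with all pairs $e$-splitting.

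Finally, extend each candidate within its $\succ f$ subtree to a common length $l$ (possible because $T_{e-1}$ has no dead ends), which preserves all $e$-splits. The procedure's search at some sufficiently large stage $s$ enumerates and verifies this configuration, completing stage $n$. The main obstacle is the pairwise-splitting step: naively, ``fixing'' one pair might undo another, but this worry is dissolved by the observation that extensions preserve the values of convergent computations, and hence preserve all $e$-splits already arranged.
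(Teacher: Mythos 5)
Your proof is correct and follows the same outline as the paper's: use admissibility to locate $\top$-labeled main descendants $\sigma_i'$ of the leaves, then produce pairwise $e$-splitting extensions inside the $\succ f$ subtrees, which the procedure's search will eventually find. Where the paper simply cites ``standard arguments'' for the pairwise-splitting step, you correctly spell them out via the round-by-round repair of un-split pairs, using the observation that convergent computations (hence $e$-splits) persist under extension; this is exactly the intended argument.

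One small imprecision: when you invoke the hypothesis at $\sigma_i'$ (and later at $\alpha'$, $\beta'$), you justify it only by noting $\ell_{e-1}(\sigma_i') = \top$. The hypothesis also requires $\sigma_i' \in T_{e-1}\treeres{\rho}{\succ\varnothing}$. This does hold --- the paper records as a remark that every node of $T$ lies in $T_{e-1}\treeres{\rho}{\succ\varnothing}$, and then main descendants (labels $\succeq$ the parent's, hence $\succ\varnothing$) and $\succ f$-descendants stay inside $T_{e-1}\treeres{\rho}{\succ\varnothing}$ --- but the verification should be stated rather than folded into ``$\top \succ \varnothing$.''
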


As part of proving this lemma, we will use the following remark, which follows easily from the construction:
\begin{remark}
	Every node on $T$ is a node on $T_{e-1} \treeres{\rho}{\succ \varnothing}$.
\end{remark}

\begin{proof}[Proof of Lemma \ref{lem:split}]
	If we have built $T$ up to level $n$, and $T[n]$ has leaves $\sigma_1,\ldots,\sigma_k$, then as $T_{e-1}$ is admissible, for each $i$ there are $\sigma_i'$ on $T_{e-1} \treeres{\sigma_i}{main}$ with $\ell(\sigma_i') = \top$. By the remark, each $\sigma_i' \in T_{e-1} \treeres{\rho}{\succ \varnothing}$. Then using the assumption of the lemma and standard arguments there are $\sigma_i^{*},\sigma_i^{**}$ on $T_{e-1} \treeres{\sigma_i'}{\succ f}$ such that all of the $\sigma_i^*$ and $\sigma_i^{**}$ pairwise $e$-split. For sufficiently large stages $s$, we will find these extensions.
\end{proof}

The remaining lemmas of this section give properties of the tree constructed by the procedure. The next few lemmas show that the tree $T$ has expansionary levels and is $e$-splitting. As a result, we will see that $T$ is admissible.

\begin{lemma}\label{lem:expansionary}
	Suppose that {\upshape\texttt{Procedure($e$,$\rho$,$T_{e-1}$)}} successfully constructs $T$. For each $\sigma^* \in T[e_{n+1}]$, there is a predecessor $\sigma$ of $\sigma^*$ which is $n$-expansionary.
\end{lemma}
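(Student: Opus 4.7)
My plan is to argue by induction on $n$, with the core observation being that inside the $n$th strip, any path that has not yet acquired an $n$-expansionary predecessor is forced to travel along secondary children only, whose labels strictly decrease in the finite set $\Labels_{n-1}$ and therefore must run out well before the strip ends.

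I would handle the base case $n=1$ by treating the root $\rho$ as $1$-expansionary by convention (it has $\scope(\rho)=1$ and lies on every path), so any $\sigma^{*} \in T[e_{2}]$ has $\rho$ as a $1$-expansionary predecessor. For the inductive step I assume the statement for $n-1$, so that every node at level $e_{n}$ has an $(n-1)$-expansionary predecessor and hence scope at least $n-1$. I then fix $\sigma^{*} \in T[e_{n+1}]$, let $P$ be the path from $\rho$ to $\sigma^{*}$, and suppose for a contradiction that no node on $P$ is $n$-expansionary; then from level $e_{n}$ onward the scope along $P$ equals exactly $n-1$.

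The crux is to examine each transition from a node $\mu$ on $P$ at level $m$ with $e_{n} \le m < e_{n+1}$ to its successor $\mu'$ on $P$. In this range the last expansionary level $\le m$ is $e_{n}$, so the construction's index is $t=n$. Since no predecessor of $\mu$ is $n$-expansionary by assumption, the construction would assign any main child of $\mu$ scope $\scope(\mu)+1 = n$, making it $n$-expansionary and contradicting our hypothesis. Hence $\mu'$ must be a secondary child of $\mu$, and so $\ell(\mu') \prec \ell(\mu)$ strictly.

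The labels along this portion of $P$ therefore form a strictly decreasing chain in $\Labels_{n-1}$, which has cardinality $2^{n}$. When a node $\sigma$ has $\ell(\sigma)=\varnothing$ the construction forbids secondary children by the clause ``If $\ell(\sigma)\succ\varnothing$'', so the next step on $P$ would have to be a main child, which by the previous paragraph would be $n$-expansionary, giving a contradiction. This contradiction is reached within $2^{n}$ secondary steps, comfortably inside the strip length $e_{n+1}-e_{n}=2^{n+5}$. The main delicacy lies in (i) aligning indexing conventions so that the inductive hypothesis pins the scope to $n-1$ at the start of the $n$th strip, and (ii) verifying that the construction's index is indeed $t=n$ throughout $e_{n}\le m < e_{n+1}$ so that the ``main child implies $n$-expansionary'' implication is valid; these are bookkeeping issues rather than conceptual obstacles, and everything else reduces to the elementary bound $|\Labels_{n-1}|=2^{n}\ll 2^{n+5}$.
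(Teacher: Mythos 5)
Your proof is correct and follows essentially the same route as the paper: inside the $n$th strip a main-child step would either produce an $n$-expansionary node or presuppose an $n$-expansionary predecessor, so every step would have to be a secondary child, forcing a strictly decreasing chain of labels in $\Labels_{n-1}$, which is impossible since $e_{n+1}-e_n = 2^{n+5} > |\Labels_{n-1}|$. Your explicit induction on $n$ (and the convention that the root counts as the first expansionary node) merely makes precise the scope bookkeeping, namely that $\scope = n-1$ at level $e_n$ under the hypothesis, which the paper's terser, non-inductive write-up uses implicitly.
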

\begin{proof}
	Let $\sigma_0 \in T[e_{n}]$ be the predecessor of $\sigma^*$ at the $n$th expansionary level, and let $\sigma_0,\sigma_1,\sigma_2,\ldots,\sigma_k = \sigma^*$ be the sequence of predecessors of $\sigma^*$ between $\sigma_0$ and $\sigma^*$. If any $\sigma_{i+1}$ were a main child of $\sigma_i$, then either $\sigma_{i+1}$ would be $n$-expansionary, or $\sigma_i$ or one of its predecessors would be $n$-expansionary as desired. If none of these are expansionary, then we must have
	\[ \top \succeq \ell(\sigma_0) \succ \ell(\sigma_1) \succ \ell(\sigma_2) \succ \cdots \succ \ell(\sigma_k) = \ell(\sigma^*) \]
	with all of these in $\Labels_{n-1}$. Since $e_{n+1} > e_n + 2^{n+1} > |\Labels_{n-1}|$, this cannot be the case, and so some predecessor of $\sigma^*$ must be expansionary.
\end{proof}

The following lemma is easy to see by inspecting the construction.

\begin{lemma}\label{lemma:splits-or-down}
	Suppose that {\upshape\texttt{Procedure($e$,$\rho$,$T_{e-1}$)}} successfully constructs $T$. Given distinct leaves $\sigma$ and $\sigma$ of $T[n]$, and $\sigma^*,\tau^* \in T[n+1]$ which are children of $\sigma$ and $\tau$ respectively, either:
	\begin{itemize}
		\item $\sigma^*$ and $\tau^*$ are main children of $\sigma$ and $\tau$ respectively, and $\sigma^*$ and $\tau^*$ $e$-split,
		\item $\sigma^*$ is a secondary child of $\sigma$, and $\scope(\sigma^*) = \scope(\sigma)$ and $\ell(\sigma^*) \prec \ell(\sigma)$, or 
		\item $\tau^*$ is a secondary child of $\tau$, and $\scope(\tau^*) = \scope(\tau)$ and $\ell(\tau^*) \prec \ell(\tau)$.
	\end{itemize}
\end{lemma}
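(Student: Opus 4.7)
The plan is a straightforward case analysis on whether each of $\sigma^*$ and $\tau^*$ is a main child or a secondary child, reading directly off the clauses in the construction that define $T[n+1]$ from $T[n]$. First observe what the construction guarantees: at the stage $s$ where we define $T[n+1]$, we searched for a single length $l$ together with extensions $\sigma^*,\sigma^{**},\tau^*,\tau^{**},\ldots$ of every leaf of $T[n]$, all of length $l$, and we only proceeded once we had found such a system of extensions that pairwise $e$-split across \emph{all} leaves. Thus if $\sigma^*$ is the main child of $\sigma$ and $\tau^*$ is the main child of $\tau$ (with $\sigma\neq \tau$), then the first bullet of the conclusion holds by the very choice of these main extensions.

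Suppose instead that $\sigma^*$ is a secondary child of $\sigma$, i.e.\ $\sigma^*$ is one of the extensions $\sigma^\dagger$ produced by the third clause in the definition of the children of $\sigma$. By that clause, $\scope(\sigma^*) = \scope(\sigma)$, which is the first half of the second bullet. For the label part, let $n = \scope(\sigma)$ and let $\eta \in \Labels_n$ be the greatest element with $\sigma^*\in T_{e-1}\treeres{\sigma}{\succeq \eta}$, as in the construction. In the subcase where $\eta$ is $\top$ or begins with $f$, the construction sets $\ell(\sigma^*)=\pred_n(\ell(\sigma))\prec \ell(\sigma)$ directly. In the subcase where $\eta=\infty\eta^*$, we have $\ell(\sigma^*)=\min_{\preceq_n}(\pred_n(\ell(\sigma)),\eta^*)\preceq \pred_n(\ell(\sigma))\prec \ell(\sigma)$. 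In either subcase, the needed inequality $\ell(\sigma^*)\prec \ell(\sigma)$ holds. The predecessor $\pred_n(\ell(\sigma))$ exists because secondary children are produced only when $\ell(\sigma)\succ \varnothing$. Swapping the roles of $\sigma$ and $\tau$ gives the symmetric conclusion for $\tau^*$.

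Since either both $\sigma^*,\tau^*$ are main children (giving the first bullet) or at least one of them is secondary (giving the second or third bullet via the above), the three stated alternatives exhaust the possibilities. The main obstacle here is only notational: one has to unwind the two label-assignment subcases for secondary children and confirm that $\min_{\preceq_n}$ is taken in the relevant order with $\infty<f<\top$, which is what makes the inequality $\ell(\sigma^*)\prec\ell(\sigma)$ automatic.
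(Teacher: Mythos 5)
Your proof is correct and follows the approach the paper implicitly intends; the paper offers no written proof (it simply remarks that the lemma ``is easy to see by inspecting the construction''), and your case analysis spells out exactly the inspection one would perform: pairwise main children $e$-split by the search condition at the stage $T[n+1]$ is defined, and secondary children $\sigma^\dagger$ get $\scope(\sigma^\dagger)=\scope(\sigma)$ and $\ell(\sigma^\dagger)\preceq\pred_n(\ell(\sigma))\prec\ell(\sigma)$ in both label subcases.
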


\begin{lemma}
	Suppose that {\upshape\texttt{Procedure($e$,$\rho$,$T_{e-1}$)}} successfully constructs $T$. Given distinct $\sigma$ and $\tau$ in $T$ at the $n$th expansionary level of the tree, and $\sigma^*,\tau^*$ which are extensions of $\sigma$ and $\tau$ respectively at the $n+1$st expansionary level of the tree, $\sigma^*$ and $\tau^*$ are $e$-splitting.
\end{lemma}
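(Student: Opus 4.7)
The plan is to find a level $l$ with $e_n < l \leq e_{n+1}$ at which both paths take main-child steps, so that the construction's $e$-splitting guarantee for main children applies. Write the two paths as $\sigma = \sigma_{e_n}, \sigma_{e_n+1}, \ldots, \sigma_{e_{n+1}} = \sigma^*$ and $\tau = \tau_{e_n}, \ldots, \tau_{e_{n+1}} = \tau^*$, where $\sigma_l$ and $\tau_l$ denote the ancestors of $\sigma^*$ and $\tau^*$ at level $l$. At such a level $l$, the parents $\sigma_{l-1}$ and $\tau_{l-1}$ are incomparable (being extensions of the incomparable nodes $\sigma$ and $\tau$) and hence are distinct leaves of $T[l-1]$. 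By the construction of $T[l]$, all main children across distinct leaves of $T[l-1]$ pairwise $e$-split, so $\sigma_l$ and $\tau_l$ $e$-split. Monotonicity of the oracle functional then lifts this to an $e$-split between $\sigma^* \succeq \sigma_l$ and $\tau^* \succeq \tau_l$.

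To find such an $l$, I would bound the number of secondary-child steps on each path. By Lemma \ref{lem:expansionary}, $\sigma^*$ has an $n$-expansionary ancestor $\sigma_{m_\sigma}$ at some level $m_\sigma \in (e_n, e_{n+1})$. Since $\sigma$ itself has scope $n-1$ and scope jumps to $n$ precisely at the (unique on the path) $n$-expansionary node, every step from level $e_n+1$ up to $m_\sigma - 1$ on $\sigma$'s path must be secondary: a main-child step from a scope-$(n-1)$ parent would itself become $n$-expansionary by the construction rule, contradicting the uniqueness of the $n$-expansionary node on the path. Along those secondary-child steps the label strictly decreases in $\Labels_{n-1}$, giving at most $|\Labels_{n-1}| = 2^n$ of them. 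Past $\sigma_{m_\sigma}$ the label lies in $\Labels_n$ and again strictly decreases at each secondary step, so at most $|\Labels_n| = 2^{n+1}$ further secondary steps occur. The total number of secondary-child steps on $\sigma$'s path is therefore at most $3 \cdot 2^n$, and the same bound holds for $\tau$.

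Pigeonhole now completes the argument, and this counting is the main obstacle compared with a direct Spector-style construction. Let $M_\sigma, M_\tau \subseteq \{e_n+1, \ldots, e_{n+1}\}$ denote the sets of levels at which $\sigma$'s, respectively $\tau$'s, path takes a main-child step. Then $|M_\sigma|, |M_\tau| \geq (e_{n+1} - e_n) - 3 \cdot 2^n = 2^{n+5} - 3 \cdot 2^n$, whence
\[ |M_\sigma \cap M_\tau| \;\geq\; |M_\sigma| + |M_\tau| - (e_{n+1} - e_n) \;\geq\; 2^{n+5} - 6 \cdot 2^n \;>\; 0, \]
so any $l \in M_\sigma \cap M_\tau$ works. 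The point is that the construction only guarantees $e$-splits among main children introduced together at a single level; a main-child step on one path paired with a secondary-child step on the other conveys no splitting information. The explicit spacing $e_{n+1} - e_n = 2^{n+5}$ is chosen precisely to leave exponential slack beyond the $O(2^n)$ secondary-step budget on each path, forcing a common main-child level and hence an $e$-split that propagates down to $\sigma^*$ and $\tau^*$.
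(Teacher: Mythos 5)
Your proof is correct and takes essentially the same approach as the paper: both bound the number of expansionary and secondary steps on each path by $O(2^n)$ using the size of the label sets and the uniqueness of the expansionary node, then apply pigeonhole over the $2^{n+5}$ levels in the strip to find a level where both paths take main-child steps, which $e$-split by construction (the paper isolates this last fact as Lemma \ref{lemma:splits-or-down}). Your observation that all pre-expansionary steps after level $e_n$ must be secondary is a slightly finer accounting than the paper's, which simply takes ``at most one expansionary step plus at most $|\Labels_n|+|\Labels_{n+1}|$ label drops,'' but the counting strategy is the same.
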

\begin{proof}
	Let $\sigma_0 = \sigma,\sigma_1,\sigma_2,\ldots,\sigma_k = \sigma^*$ be the sequence of predecessors of $\sigma^*$ between $\sigma$ and $\sigma^*$, and similarly for $\tau_0 = \tau,\tau_1,\tau_2,\ldots,\tau_k = \tau^*$. Since $\sigma_0$ and $\tau_0$ are at the level $e_n$, and $\sigma^*$ and $\tau^*$ are at the level $e_{n+1}$, we have $k \geq 2^{n+5}$. If, for any $i$, both $\sigma_{i+1}$ and $\tau_{i+1}$ are main children of $\sigma_i$ and $\tau_i$, then by Lemma \ref{lemma:splits-or-down}, $\sigma_{i+1}$ and $\tau_{i+1}$ are $e$-splitting. We argue that this must happen for some $i < k$.
	
	For each $i$, either (a) $\sigma_{i+1}$ is $n$-expansionary and $\ell(\sigma_{i+1}) = \top$, or $\scope(\sigma_{i+1}) = \scope(\sigma_i)$ and either (b) $\ell(\sigma_{i+1}) \prec \ell(\sigma_{i})$, or (c) $\ell(\sigma_{i+1}) = \ell(\sigma_{i})$. There is at most one $i$ for which (a) is the case. Thus there are at most $|\Labels_{n}|+|\Labels_{n+1}| \leq 2^{n+3}$ values of $i$ for which (b) is the case. The same is true for the $\tau_i$. So, as $k \geq 2^{n+5}$, there must be some $i$ for which neither (a) nor (b) is the case for either the $\sigma_i$ or the $\tau_i$. For this $i$, we have both $\sigma_{i+1}$ and $\tau_{i+1}$ are main children of $\sigma_i$ and $\tau_i$ respectively, and so $\sigma_{i+1}$ and $\tau_{i+1}$ are $e$-splitting. Thus $\sigma^*$ and $\tau^*$ are $e$-splitting.
\end{proof}

\begin{lemma}
	Suppose that {\upshape\texttt{Procedure($e$,$\rho$,$T_{e-1}$)}} successfully constructs $T$. $T$ is an $e$-splitting tree: any two paths in $T$ are $e$-splitting.
\end{lemma}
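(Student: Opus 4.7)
The plan is to deduce this directly from the previous lemma, which already gives $e$-splitting for nodes at successive expansionary levels; the only thing we need is to observe that any two distinct paths must pass through different nodes at some expansionary level.

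More concretely, let $\pi_1 \neq \pi_2$ be two distinct paths through $T$. Since they are distinct, they differ at some finite level $k$, and since distinct paths never re-merge, they pass through different nodes at every level $\geq k$. Because the expansionary levels $e_1 < e_2 < \cdots$ are unbounded (as they are defined statically and grow geometrically), we may pick $n$ large enough that $e_n \geq k$, so that $\pi_1$ and $\pi_2$ pass through distinct nodes $\sigma, \tau$ at level $e_n$. Let $\sigma^* \prec \pi_1$ and $\tau^* \prec \pi_2$ be their extensions at the next expansionary level $e_{n+1}$.

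By the previous lemma, $\sigma^*$ and $\tau^*$ are $e$-splitting, so there exists some $x$ with $\Phi_e^{\sigma^*}(x) \downarrow \neq \Phi_e^{\tau^*}(x) \downarrow$. Since each converging oracle computation uses only finitely many bits of its oracle, and both $\sigma^* \prec \pi_1$ and $\tau^* \prec \pi_2$, we get
\[ \Phi_e^{\pi_1}(x) = \Phi_e^{\sigma^*}(x) \neq \Phi_e^{\tau^*}(x) = \Phi_e^{\pi_2}(x), \]
with both sides convergent. Thus $\pi_1$ and $\pi_2$ $e$-split, as required.

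There is essentially no obstacle here since all the work was done in the previous lemma; the only small point to check is that every pair of distinct paths eventually separates at some expansionary level, but this follows immediately from the fact that distinct paths separate at some finite level and the expansionary levels are cofinal in $\omega$.
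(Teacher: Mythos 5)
Your proof is correct and follows essentially the same route as the paper's: both pick initial segments of the two paths at some expansionary level $e_n$ where they have already separated, extend to the next expansionary level $e_{n+1}$, and invoke the preceding lemma to obtain the $e$-split. The only difference is that you spell out the use-principle step (a converging oracle computation depends only on a finite initial segment), which the paper leaves implicit.
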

\begin{proof}
	Choose $\sigma_1$ and $\sigma_2$ initial segments of the two paths, long enough that they are distinct, which are at the $n$th expansionary level $T[e_n]$. Let $\tau_1,\tau_2$ be the longer initial segments of the paths at the $n+1$st expansionary level $T[e_{n+1}]$. Then by the previous lemma, $\tau_1$ and $\tau_2$ are $e$-splitting, and so the two paths are $e$-splitting.
\end{proof}

The next lemmas relate the labels of $T$ to the labels on $T_{e-1}$. If a node is labeled on $T_{e-1}$ so that it is not simulated by some lowness requirement, then it should also be labeled on $T$ to not be simulated by that lowness requirement. (The converse is not necessary; $T$ might determine that some node should not be simulated even if that was not determined by $T_{e-1}$.)

\begin{lemma}\label{lem:n-to-n-1}
Suppose that {\upshape\texttt{Procedure($e$,$\rho$,$T_{e-1}$)}} successfully constructs $T$. Given $\sigma \in T$ and $\sigma^* \in T \treeres{\sigma}{\succ \eta}$, with $\ell_{e-1}(\sigma) = \top$, we have $\sigma^* \in T_{e-1} \treeres{\sigma}{\succ \infty \eta}$. In particular, if $\ell_e(\sigma^*) \succ \eta$, then $\ell_{e-1}(\sigma^*) \succ \infty\eta$.
\end{lemma}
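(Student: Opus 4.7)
I would prove the statement by induction on the length of the path from $\sigma$ to $\sigma^*$ in $T$. The base case $\sigma^* = \sigma$ is immediate since $\sigma$ is the root of both $T \treeres{\sigma}{\succ \eta}$ and $T_{e-1} \treeres{\sigma}{\succ \infty \eta}$. For the inductive step, let $\tau$ be the parent of $\sigma^*$ in $T$; by hypothesis, $\tau \in T \treeres{\sigma}{\succ \eta}$, and by the inductive hypothesis, $\tau \in T_{e-1} \treeres{\sigma}{\succ \infty \eta}$. It will be enough to verify that every node on the (possibly long) $T_{e-1}$-path from $\tau$ to $\sigma^*$ carries an $\ell_{e-1}$-label $\succ \infty \eta$. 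The key arithmetic observation I will use repeatedly is that in the lex order with $\infty < f$, any string $\succ f$ in $\Labels$ (i.e., $\top$ or a string starting with $f$ of length $\geq 2$) is automatically $\succ \infty \eta'$ for any $\eta' \in \{f,\infty\}^{<\omega}$, since $\infty \eta'$ begins with $\infty$.

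\textbf{Main-child case.} Suppose $\sigma^*$ is a main child of $\tau$ in $T$. By the construction at the stage that added $\sigma^*$, there is an intermediate $\tau' \in T_{e-1} \treeres{\tau}{main}$ with $\ell_{e-1}(\tau') = \top$, and $\sigma^* \in T_{e-1} \treeres{\tau'}{\succ f}$. Along the main-children path from $\tau$ to $\tau'$, admissibility clause (1) gives $\ell_{e-1}(\cdot) \succeq \ell_{e-1}(\tau)$, and either $\tau = \sigma$ (so $\ell_{e-1}(\tau) = \top$) or $\ell_{e-1}(\tau) \succ \infty \eta$ by the inductive hypothesis; in either case these labels are $\succ \infty \eta$. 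From $\tau'$ down to $\sigma^*$, labels are $\succ f$, hence also $\succ \infty \eta$ by the observation above. Concatenating, $\sigma^* \in T_{e-1} \treeres{\sigma}{\succ \infty \eta}$.

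\textbf{Secondary-child case.} Suppose $\sigma^*$ is a secondary child of $\tau$ in $T$; write $\sigma^* = \sigma^\dagger$, $n = \scope(\tau)$, and let $\eta^*$ be the greatest element of $\Labels_n$ with $\sigma^\dagger \in T_{e-1} \treeres{\tau}{\succeq \eta^*}$. Since $\sigma^\dagger \in T \treeres{\sigma}{\succ \eta}$, we have $\ell(\sigma^\dagger) \succ \eta$. I split on the form of $\eta^*$ exactly as in the construction. If $\eta^* = \top$ or $\eta^*$ begins with $f$, then along the $T_{e-1}$-path from $\tau$ to $\sigma^\dagger$ every label is $\succeq \eta^* \succeq f$, hence $\succ \infty \eta$. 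If instead $\eta^* = \infty \eta^{**}$, then $\ell(\sigma^\dagger) = \min(\pred_n(\ell(\tau)), \eta^{**})$, so $\ell(\sigma^\dagger) \succ \eta$ forces $\eta^{**} \succ \eta$; consequently every label on the path is $\succeq \infty \eta^{**} \succ \infty \eta$. Combined with the inductive hypothesis giving the labels from $\sigma$ to $\tau$ are $\succ \infty \eta$, we conclude $\sigma^\dagger \in T_{e-1} \treeres{\sigma}{\succ \infty \eta}$. The final \emph{``in particular''} clause is immediate: if $\sigma^* \neq \sigma$, then as a non-root node of $T_{e-1} \treeres{\sigma}{\succ \infty \eta}$ it has $\ell_{e-1}(\sigma^*) \succ \infty \eta$.

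The main technical delicacy, and the only real obstacle, is the secondary-child case: one must track precisely how the construction's rule assigning $\ell(\sigma^\dagger)$ interacts with the position of $\sigma^\dagger$ in $T_{e-1}$ relative to labels beginning with $\infty$ versus $f$. Once the lex-order observation $\succ f \Rightarrow \succ \infty \eta'$ is isolated, the rest is bookkeeping along the two segments of the $T_{e-1}$-path (from $\tau$ through $\tau'$, and from $\tau'$ to $\sigma^*$) in the main-child case, and a single appeal to the definition of $\eta^*$ in the secondary-child case.
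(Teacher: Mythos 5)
Your proposal is correct and follows the same case split (main child versus secondary child of the parent $\tau$) and the same label-tracing through $T_{e-1}$ that the paper uses, so the route is essentially the same. The one genuine difference is in the reduction to the single-step case. The paper says ``it suffices to prove the lemma when $\sigma^*$ is a child of $\sigma$,'' and then in the main-child case derives that the $T_{e-1}$-labels between $\sigma$ and $\sigma'$ are all $\top$ by invoking $\ell_{e-1}(\sigma) = \top$. But that hypothesis only holds for the root $\sigma$ of the whole path, not for the intermediate $T$-nodes one needs to step through when chaining single-step claims: for a main child $\sigma^*$, the construction only gives $\ell_{e-1}(\sigma^*) \succ f$, not $\ell_{e-1}(\sigma^*) = \top$. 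Your induction repairs this cleanly: in the main-child case you use the inductive hypothesis $\ell_{e-1}(\tau) \succ \infty\eta$ (or $\tau = \sigma$, in which case $\ell_{e-1}(\tau) = \top$) to bound the labels on the main-child segment from $\tau$ to $\tau'$ via admissibility clause (1), after which the segment from $\tau'$ to $\sigma^*$ is handled by the observation that $\succeq f \Rightarrow \succ \infty\eta'$, exactly as in the paper. The secondary-child analysis (splitting on whether the maximal $\eta^*$ with $\sigma^\dagger \in T_{e-1}\treeres{\tau}{\succeq\eta^*}$ is $\top$/starts with $f$ versus starts with $\infty$) matches the paper's. So your proof is a minor tightening of the paper's argument, correctly filling the gap in the ``it suffices'' step rather than taking a different approach; one small caution is that your stated lex observation should read $\succeq f$ rather than $\succ f$, since that is the form actually used in the secondary-child case (and it is still true: any label $\succeq f$ begins with $f$ or is $\top$, hence is $\succ \infty\eta'$).
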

\begin{proof}
	It suffices to prove the lemma when $\sigma^*$ is a child of $\sigma$ on $T$, and $\ell(\sigma^*) \succ \eta$. We have two cases, depending on whether $\sigma^*$ is a main child or secondary child of $\sigma$ on $T$.
	\begin{itemize}
		\item If $\sigma^*$ is a main child of $\sigma$ on $T$, then $\sigma^* \in T_{e-1}\treeres{\sigma'}{\succ f}$ and $\sigma' \in T_{e-1}\treeres{\sigma}{main}$, and $\ell_{e-1}(\sigma') = \top$.
		
		Since $\ell_{e-1}(\sigma) = \top$, for every $\tau$ on $T_{e-1}$ between $\sigma$ and $\sigma'$ we have $\ell_{e-1}(\tau) = \top$. Thus $\ell_{e-1}(\sigma') = \top$.
		
		Now for every $\tau$ on $T_{e-1}$ between $\sigma'$ and $\sigma^*$, we have $\ell_{e-1}(\tau) \succ f \succ \infty \eta$. So $\ell_{e-1}(\sigma^*) \succ \infty \eta$.
		
		
		\item If $\sigma^*$ is a secondary child of $\sigma$, let $n = \scope(\sigma)$ and let $\nu \in \Labels_{n}$ be least such that $\sigma^* \in T_{e-1} \treeres{\sigma}{\succeq \nu}$. If $\nu$ is $\top$ or begins with $f$, then $\nu \succ \infty \eta$ and so $\sigma^* \in T_{e-1} \treeres{\sigma}{\succ \infty\eta}$. Otherwise, if $\nu$ begins with $\infty$, say $\nu = \infty \nu^*$, then $\ell(\sigma^*) \succ \eta$ is the minimum, in $\Labels_{n}$, of $\pred_{n}(\ell(\sigma))$ and $\nu^*$. Thus $\nu^* \succ \eta$, which means that $\infty \nu^* \succ \infty \eta$, and so $\sigma^* \in T_{e-1} \treeres{\sigma}{\succ \infty\eta}$.
	\end{itemize}
	This proves the lemma.
\end{proof}

Similarly, we can prove the same lemma but replacing $\succ$ with $\succeq$. We have:

\begin{lemma}\label{lem:n-to-n-1-eq}
	Suppose that {\upshape\texttt{Procedure($e$,$\rho$,$T_{e-1}$)}} successfully constructs $T$. Given $\sigma \in T$ and $\sigma^* \in T \treeres{\sigma}{\succeq \eta}$, we have $\sigma^* \in T_{e-1} \treeres{\sigma}{\succeq \infty \eta}$. In particular, if $\ell_e(\sigma^*) \succeq \eta$, then $\ell_{e-1}(\sigma^*) \succeq \infty\eta$.
\end{lemma}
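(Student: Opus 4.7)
The plan is to mirror the proof of Lemma \ref{lem:n-to-n-1} essentially verbatim, with the only genuine change being one careful inequality comparison in the secondary-child case where $\succ$ must be weakened to $\succeq$. As there, I would first note that the relation $\sigma^* \in T\treeres{\sigma}{\succeq \eta}$ is transitive along paths in $T$: if $\sigma^{**}$ lies on the $T$-path from $\sigma$ to $\sigma^*$, then $\sigma^{**} \in T\treeres{\sigma}{\succeq \eta}$ and $\sigma^* \in T\treeres{\sigma^{**}}{\succeq \eta}$, and the same transitivity holds in $T_{e-1}$ with parameter $\infty \eta$. This reduces the proof to the case where $\sigma^*$ is an immediate child of $\sigma$ in $T$ with $\ell_e(\sigma^*) \succeq \eta$, and I would split as in the earlier lemma.

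In the main-child case I would run exactly the argument from Lemma \ref{lem:n-to-n-1}: the construction produces $\sigma' \in T_{e-1}\treeres{\sigma}{main}$ with $\ell_{e-1}(\sigma') = \top$ and $\sigma^* \in T_{e-1}\treeres{\sigma'}{\succ f}$. Admissibility of $T_{e-1}$ forces the main-branch labels between $\sigma$ and $\sigma'$ to be $\top$, and the labels between $\sigma'$ and $\sigma^*$ are $\succ f$, which is $\succ \infty\eta$ (since any string starting with $f$ dominates any string starting with $\infty$, and $\top$ dominates everything). Hence $\sigma^* \in T_{e-1} \treeres{\sigma}{\succ \infty\eta} \subseteq T_{e-1} \treeres{\sigma}{\succeq \infty\eta}$.

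In the secondary-child case, let $n = \scope(\sigma)$ and let $\nu \in \Labels_n$ be greatest with $\sigma^* \in T_{e-1} \treeres{\sigma}{\succeq \nu}$. If $\nu$ is $\top$ or begins with $f$, the same observation as above gives $\nu \succ \infty\eta$, and we are done. The interesting sub-case is $\nu = \infty \nu^*$: the construction sets $\ell(\sigma^*) = \min(\pred_n(\ell(\sigma)), \nu^*)$, and from the hypothesis $\ell(\sigma^*) \succeq \eta$ we conclude $\nu^* \succeq \eta$. This is precisely where the weakening matters: in Lemma \ref{lem:n-to-n-1} the strict inequality $\nu^* \succ \eta$ gave $\infty\nu^* \succ \infty\eta$, while here the weak inequality $\nu^* \succeq \eta$ lexicographically yields $\infty\nu^* \succeq \infty\eta$, i.e.\ $\nu \succeq \infty\eta$, which is exactly what is needed to get $\sigma^* \in T_{e-1} \treeres{\sigma}{\succeq \infty\eta}$.

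The only subtle step is making sure the main-child case actually applies in the general situation of the statement (without reimposing $\ell_{e-1}(\sigma)=\top$); I expect this to be the main obstacle, and I would handle it by observing that the main-branch portion of the $T_{e-1}$-path from $\sigma$ to $\sigma'$ lies entirely above $\sigma$ in $T_{e-1}\treeres{\sigma}{main}$ and its labels are non-decreasing by admissibility, so each such label is $\succeq \ell_{e-1}(\sigma)$, combined with the fact that $\ell_{e-1}(\sigma') = \top$ forces each of these labels to lie between $\ell_{e-1}(\sigma)$ and $\top$ inclusive---applied either to the root $\rho$ (where $\ell_{e-1}(\rho)=\top$) or inductively via the invariant $\ell_{e-1}(\sigma) \succeq \infty \ell_e(\sigma)$ established by the same two-case analysis on how $\sigma$ was added to $T$. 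The ``in particular'' clause is then immediate, either by specializing to $\sigma = \rho$ (for which $\ell_{e-1}(\rho)=\top \succeq \infty\eta$ trivially) or by applying the invariant directly to $\sigma^*$.
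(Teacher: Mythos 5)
Your proposal is essentially the argument the paper has in mind: the paper's entire proof of Lemma~\ref{lem:n-to-n-1-eq} is the single sentence ``Similarly, we can prove the same lemma but replacing $\succ$ with $\succeq$,'' and your point-by-point rerun of Lemma~\ref{lem:n-to-n-1} with the weak inequality threaded through is exactly the right reading. Your treatment of the secondary-child case is correct and is where the genuine content lies: the construction sets $\ell(\sigma^\dagger) = \min(\pred_n(\ell(\sigma)),\nu^*)$ when $\nu = \infty\nu^*$, so $\ell_e(\sigma^*) \succeq \eta$ forces $\nu^* \succeq \eta$ and hence $\nu = \infty\nu^* \succeq \infty\eta$, giving $\sigma^* \in T_{e-1}\treeres{\sigma}{\succeq\infty\eta}$ from $\sigma^* \in T_{e-1}\treeres{\sigma}{\succeq\nu}$.

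You are also right to flag the dropped hypothesis $\ell_{e-1}(\sigma) = \top$ as the real subtlety, and here you have found a genuine soft spot. The paper's proof of Lemma~\ref{lem:n-to-n-1} uses $\ell_{e-1}(\sigma) = \top$ precisely to conclude that every node of $T_{e-1}$ on the main-child segment from $\sigma$ to $\sigma'$ has label $\top$; as stated, Lemma~\ref{lem:n-to-n-1-eq} drops this assumption, and without it the full conclusion $\sigma^* \in T_{e-1}\treeres{\sigma}{\succeq\infty\eta}$ can fail (take $\ell_{e-1}(\sigma) = \infty$ and an intermediate main child $\tau$ of $\sigma$ in $T_{e-1}$ with $\ell_{e-1}(\tau) = \infty \prec \infty\eta$). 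However, your proposed patch via the invariant $\ell_{e-1}(\sigma) \succeq \infty\ell_e(\sigma)$ does not close this gap: that invariant (which is really just the ``in particular'' clause applied to $\sigma$) constrains only the labels of nodes \emph{of $T$}, whereas the problematic nodes are those of $T_{e-1}$ lying strictly between $\sigma$ and $\sigma'$ that are not in $T$, and the invariant says nothing about them. The right reading is almost certainly that the authors intend the hypothesis $\ell_{e-1}(\sigma) = \top$ to carry over verbatim from Lemma~\ref{lem:n-to-n-1} (consistent with ``the same lemma''), in which case your main-child argument goes through exactly as in the earlier proof. Note also that the ``in particular'' clause \emph{does} hold with no extra hypothesis (main children of $T$ have $\ell_{e-1} \succ f \succ \infty\eta$; secondary children were handled above), and that is the only form of the lemma that would be needed downstream.
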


Finally, putting together results from all of these lemmas, we have:

\begin{lemma}\label{lem:admissible}
		Suppose that {\upshape\texttt{Procedure($e$,$\rho$,$T_{e-1}$)}} successfully constructs $T$. Then $T$ is an admissible tree.
\end{lemma}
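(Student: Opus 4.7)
The plan is to verify the three clauses in the definition of admissibility one by one, reading off clauses (1) and (2) directly from the construction and deducing clause (3) from Lemma~\ref{lem:expansionary}.

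For clause (1), at every stage of \texttt{Procedure($e$,$\rho$,$T_{e-1}$)} in which a new level is added, each leaf $\sigma$ of $T[n]$ is given exactly two designated main children $\sigma^*$ and $\sigma^{**}$. Inspecting the two bullet points that define $\ell(\sigma^*)$ and $\ell(\sigma^{**})$, each of these labels is either $\top$ (in the case when no predecessor is $t$-expansionary, so $\sigma^*$ is itself an expansionary node) or equal to $\ell(\sigma)$. Since $\top$ is the greatest element of $\Labels$ and $\ell(\sigma) \succeq \ell(\sigma)$ trivially, in either case $\ell(\sigma^*) \succeq \ell(\sigma)$ and $\ell(\sigma^{**}) \succeq \ell(\sigma)$, as required.

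For clause (2), again by inspection: the scope of a main child is either $\scope(\sigma)$ or $\scope(\sigma)+1$, and the scope of a secondary child $\sigma^\dagger$ is set to $\scope(\sigma)$. Hence $\scope(\sigma^*) \geq \scope(\sigma)$ for every child $\sigma^*$ of $\sigma$ on $T$.

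For clause (3), fix $n$ and a path $P$ through $T$. Since $T$ was constructed successfully, $P$ has initial segments at every level, in particular at level $e_{n+1}$. By Lemma~\ref{lem:expansionary}, this initial segment has a predecessor $\sigma$ on $P$ which is $n$-expansionary, meaning that $\scope(\sigma) = n$ and $\ell(\sigma) = \top$ by the construction (an $n$-expansionary node is by definition one whose scope jumps from $n-1$ up to $n$, and the construction assigns it label $\top$). Thus $\sigma$ is a node on $P$ with $\ell(\sigma) = \top$ and $\scope(\sigma) \geq n$. Combining the three clauses shows $T$ is admissible. The only nontrivial input here is Lemma~\ref{lem:expansionary}; everything else is a direct unpacking of the definitions in the construction, so there is no real obstacle.
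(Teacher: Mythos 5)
Your proof is correct and matches the paper's (implicit) reasoning: the paper states this lemma without a separate proof, treating it as a direct consequence of the construction and Lemma~\ref{lem:expansionary}, which is exactly how you argue it. Clauses (1) and (2) do fall out of a direct reading of how \texttt{Procedure} assigns labels and scopes to main and secondary children, and your use of Lemma~\ref{lem:expansionary} at level $e_{n+1}$ to produce an $n$-expansionary predecessor with label $\top$ and scope $n$ correctly yields clause (3).
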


\subsection{Minimality requirements}\label{sec:min-req}

In the previous subsection, we described a procedure for constructing an $e$-splitting tree. In this section, we will show how the procedure is applied.

We begin with $\mathbb{T}_{-1}$ defined as in Section \ref{sec:two}. We put $\ell_{-1}(\sigma) = \top$ and $\scope_{-1}(\sigma) = |\sigma|$ for each $\sigma \in \mathbb{T}_{-1}$. This $\mathbb{T}_{-1}$ is admissible. For each instance $\mc{M}_e^\xi$  we will define trees $\mathbb{T}_e^{\xi \concat \infty}$ and $\mathbb{T}_e^{\xi\concat \sigma}$, where $\mathbb{T}_e^{\xi \concat \infty}$ is the outcome of the attempt to construct an $e$-splitting tree, and the $\mathbb{T}_e^{\xi\concat \sigma}$ are subtrees which would witness the failure of the construction of the $e$-splitting tree. Define
\begin{itemize}
	\item $\mathbb{T}_e^{\xi \concat \infty}$ is the labeled tree $T$ produced by \texttt{Procedure($e$,$\sigma$,$\mathbb{T}^{\xi \; \res_{3e-3}}_{e-1}$)} where $\sigma = \xi(\mc{P}_{e-1})$ (or $\sigma$ is the root of $\mathbb{T}_{-1}$ if $e = 0$). The tree $T$ built by this procedure has labels $\ell_e$ and scopes $\scope_e$ defined in its construction.
	\item $\mathbb{T}_e^{\xi \concat \sigma}$ is the tree $\mathbb{T}^{\xi \; \res_{3e-3}}_{e-1}\treeres{\sigma}{\succ f}$. The labels $\ell_e$ of the tree $\mathbb{T}_e^{\xi \concat \sigma}$ are defined by setting:
	\begin{itemize}
		\item $\scope_e(\sigma) = 1$ and $\ell_e(\sigma) = \top$;
		\item for $\tau \neq \sigma$, $\scope_e(\tau) = \scope_{e-1}(\tau)-1$ and $\ell_e(\tau) = \top$ if $\ell_{e-1}(\tau) = \top$, or $\ell_e(\tau) = \eta$ if $\ell_{e-1}(\tau) = f\eta$.
	\end{itemize}
\end{itemize}
There is a uniform and computable construction of all of these trees simultaneously. Whenever in \texttt{Procedure($e$,$\sigma$,$\mathbb{T}^{\xi \; \res_{3e-3}}_{e-1}$)} we need to determine the next level of the tree $\mathbb{T}^{\xi \; \res_{3e-3}}_{e-1}$, the procedure waits until this next level is defined. If $\mathbb{T}^{\xi \; \res_{3e-3}}_{e-1}$ is a total tree then at some point enough of it will be define for the procedure to continue, and it if is not a total tree, then the procedure will get stuck and $\mathbb{T}^{\xi \concat \infty}_{e}$ will also be partial. Similarly, to define a level of $\mathbb{T}_e^{\xi \concat \sigma} = \mathbb{T}^{\xi \; \res_{3e-3}}_{e-1}\treeres{\sigma}{\succ f}$, we need to first build some portion of $\mathbb{T}^{\xi \; \res_{3e-3}}_{e-1}$.

To be a bit more precise, there are two parts of \texttt{Procedure}. First, there is the part where we look for extensions $\sigma^*$ and $\sigma^{**}$ of each leaf $\sigma$ of $T[n]$; and second, after waiting for $T_{e-1}[s]$ to be defined, we define the next level $T[n+1]$ of $T$. If in the first part of \texttt{Procedure}, the tree $T_{e-1}$ has not been sufficiently defined, we just end the current stage of the procedure and restart at the next stage. In the second part of \texttt{Procedure}, we just wait for $T_{e-1}[s]$ to be defined, and then continue from where we were; but while we wait, we still count through the stages, so that when we return it is not at stage $s+1$ but at some greater stage depending on how long we waited for $T_{e-1}$. This will be important to make sure that the simulations used by the lowness requirements are not too slow.

After we define the true path, we will use Lemma \ref{lem:split} to show that along the true path the trees are all fully defined and admissible. When we are not on the true path, e.g.\ if the outcome of $\mc{M}^\eta_e$ is finitary (which means that \texttt{Procedure} is unsuccessful because it cannot find enough $e$-splits), the tree $\mathbb{T}_e^{\eta \concat \infty}$ will be partial, and then any tree defined using this tree will also be partial. Of course, these will all be off the true path.

\subsection{Construction of $A$ and the true path}

We simultaneously define $A$ and a true path of outcomes $\pi$ by finite extension. The construction of the trees in the previous section was uniformly computable, but $A$ and the true path $\pi$ will non-computable. (This of this part of the construction as analogous to choosing a generic in a forcing construction.)

Begin with $A_{-1} = \varnothing$ and $\pi_{-1} = \varnothing$. Suppose that we have so far defined $A_s \prec A$ and $\pi_s \prec \pi$, with $|\pi_s| = s+1$. To define $A_{s+1}$ and $\pi_{s+1}$ we first ask the next requirement what it's outcome is, and then define the extensions appropriately. We use $\mathbb{T}_e$ for the tree defined along the true outcome,  i.e.\ $\mathbb{T}_e := \mathbb{T}_e^{\pi_{3e}}$. Begin with $\pi_{-1} = \varnothing$.
\begin{description}
	\item[$s+1 = 3e$:] Consider $\mathcal{M}_e^{\pi_s}$. By Lemma \ref{lem:split}, either $\mathbb{T}_e^{\pi_s \concat \infty}$ is an $e$-splitting tree, or there is $\sigma \in \mathbb{T}_{e-1} \treeres{\pi_s (\mc{P}_{e-1})}{\succ \varnothing}$ with $\ell_{e-1}(\sigma) = \top$ such that either:
	\begin{enumerate}
		\item there is $n$ such that for all $\tau \in \mathbb{T}\treeres{\sigma}{\succ f}_{e-1}$, $\Phi_e^\tau(n) \uparrow$, or
		\item for all $\tau_1,\tau_2 \in \mathbb{T}\treeres{\sigma}{\succ f}_{e-1}$ and $n$,
		\[ \Phi_e^{\tau_1}(n) \downarrow \ \wedge \ \Phi_e^{\tau_2}(n) \downarrow \quad \longrightarrow \quad  \Phi_e^{\pi_1}(n) = \Phi_e^{\pi_2}(n).\]
	\end{enumerate}
	In the former case, let $\pi_{s+1} = \pi_s \concat \infty$ and $A_{s+1} = A_s$.
	
	In the latter two case, let $\pi_{s+1} = \pi_s \concat \sigma$ and $A_{s+1} = \sigma \succeq A_s$.
	
	\item[$s+1 = 3e + 1$:] Consider $\mathcal{L}_e^{\pi_s}$ with $e = \la e_1,e_2 \ra$. If there is $\sigma \in \mathbb{T}_e$ and $n$ such that $\Psi_{e_1}^\sigma(n) \neq R_{e_2}(n)$, then let $A_{s+1} = \sigma$ and $\pi_{s+1} = \pi_s \concat \sigma$. We may choose $\sigma$ to have $\ell_e(\sigma) = \top$, as (by Lemma \ref{lem:all-trees-admissible}) $\mathbb{T}_e$ is admissible. Otherwise, if there is no such $\sigma$, let $A_{s+1} = A_s$ and $\pi_{s+1} = \pi_s \concat \infty$.
	
	\item[$s+1 = 3e + 2$:] Consider $\mathcal{P}_e^{\pi_s}$. If $A_s = \sigma \in \mathbb{T}_e$, let $\tau_1$ and $\tau_2$ be the two main children of $\sigma$ on $\mathbb{T}_e$. Choose $A_{s+1} \succ \sigma$ to be whichever of $\tau_1,\tau_2$ is not an initial segment of the $e$th c.e.\ set $W_e$.
\end{description}
We define $A = \bigcup_s A_s$ and the true sequence of outcomes $\pi = \bigcup_s \pi_s$. We denote by $\pi_{\mc{R}}$ the true outcome up to and including the requirement $\mc{R}$; for example, $\pi_{\mc{M}_e} = \pi_{3e}$; and similarly for $A_{\mc{R}}$.

In the following lemma, we prove that along the true path, the trees that we construct are total and admissible.

\begin{lemma}\label{lem:all-trees-admissible}
	For each $e$, $\mathbb{T}_e$ is an admissible labeled tree.
\end{lemma}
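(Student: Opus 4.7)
The plan is induction on $e$. The base case $e = -1$ is immediate by inspection: $\mathbb{T}_{-1}$ consists of all strings of the form $a_1^{2^0} a_2^{2^1} \cdots a_k^{2^{k-1}}$, each labeled $\top$ with $\scope_{-1}(\sigma) = |\sigma|$; the two extensions $\sigma \concat 0^{2^k}$ and $\sigma \concat 1^{2^k}$ serve as main children witnessing (1), scopes are nondecreasing, and every path has $\top$-labeled nodes of unbounded scope.

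For the inductive step, assume $\mathbb{T}_{e-1}$ is admissible. Split into cases by the true outcome of $\mc{M}_e^{\pi_{3e-1}}$. In the infinitary case, the construction rule on the true path guarantees that $\mathbb{T}_e = \mathbb{T}_e^{\pi_{3e-1} \concat \infty}$ arises from a successful run of \texttt{Procedure($e,\sigma,\mathbb{T}_{e-1}$)} with $\sigma = \pi_{3e-1}(\mc{P}_{e-1})$. A short bookkeeping step verifies $\ell_{e-1}(\sigma) = \top$: $\mc{P}_{e-1}$ always picks a main child of the previous approximation, and \texttt{Procedure}'s rules send main children of $\top$-labeled nodes to $\top$ again. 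With the inductive hypothesis and this observation, Lemma \ref{lem:admissible} directly yields that $\mathbb{T}_e$ is admissible.

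In the finitary case, $\mathbb{T}_e = \mathbb{T}_{e-1}\treeres{\sigma}{\succ f}$ with labels and scopes relabeled as specified. I verify the three admissibility conditions directly. For (1), each $\tau \in \mathbb{T}_e$ satisfies either $\tau = \sigma$ with $\ell_{e-1}(\tau) = \top$ or $\ell_{e-1}(\tau) \succ f$, so by admissibility of $\mathbb{T}_{e-1}$ the two main children $\tau^*, \tau^{**}$ satisfy $\ell_{e-1}(\tau^{(*)}) \succeq \ell_{e-1}(\tau) \succ f$ and therefore survive into $\mathbb{T}_e$; a case split on whether $\ell_{e-1}(\tau) = \top$ or $\ell_{e-1}(\tau) = f\eta$ shows the relabeling preserves $\ell_e(\tau^{(*)}) \succeq \ell_e(\tau)$. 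For (2), the uniform decrement at non-root nodes preserves the weak inequality, and at the root the reset $\scope_e(\sigma) = 1$ together with the fact that children $\tau^*$ have $\scope_{e-1}(\tau^*) \geq \scope_{e-1}(\sigma)$ gives $\scope_e(\tau^*) \geq \scope_{e-1}(\sigma) - 1$; combined with $\scope_{e-1}(\sigma) \geq 2$ (which holds because the outcome-witnessing $\sigma$ with $\ell_{e-1}(\sigma) = \top$ must lie at or beyond the first expansionary level of $\mathbb{T}_{e-1}$) this maintains monotonicity. For (3), any path $\pi$ through $\mathbb{T}_e$ is also a path through $\mathbb{T}_{e-1}$, so by the inductive hypothesis it meets nodes $\tau$ with $\ell_{e-1}(\tau) = \top$ and $\scope_{e-1}(\tau) \geq n+1$ for every $n$; such $\tau$ remain in $\mathbb{T}_e$ (since $\top \succ f$) with $\ell_e(\tau) = \top$ and $\scope_e(\tau) \geq n$, as required.

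The main obstacle is bookkeeping in the finitary case---in particular, ensuring $\scope_{e-1}(\sigma) \geq 2$ so that the scope decrement does not violate monotonicity at the new root, and tracking the two-subcase relabeling ($\ell_{e-1}(\tau) = \top$ versus $\ell_{e-1}(\tau) = f\eta$) to confirm that $\succeq$ is preserved; both hinge on inspecting the \texttt{Procedure}'s labeling scheme established in the previous subsection.
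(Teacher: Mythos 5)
Your proof follows the same skeleton as the paper's: induction on $e$, with the infinitary case discharged by Lemma~\ref{lem:admissible} and the finitary case handled by verifying the three admissibility conditions directly for $\mathbb{T}_{e-1}\treeres{\sigma}{\succ f}$ with the shifted labels and decremented scopes. Your handling of (1) and (3) matches the paper's and is correct.

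The one place you go beyond the paper is in flagging the scope-monotonicity condition (2) at the new root $\sigma$, where $\scope_e(\sigma)$ is reset to $1$ while children $\tau^*$ get $\scope_e(\tau^*)=\scope_{e-1}(\tau^*)-1$, so one needs $\scope_{e-1}(\tau^*)\geq 2$. You reduce this to $\scope_{e-1}(\sigma)\geq 2$, which is fine, but the justification you give --- that ``the outcome-witnessing $\sigma$ with $\ell_{e-1}(\sigma)=\top$ must lie at or beyond the first expansionary level of $\mathbb{T}_{e-1}$'' --- is vacuous: the construction sets $e_1 = 0$, so \emph{every} node lies at or beyond the first expansionary level, including the root, which has scope $1$. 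If what you meant is that $\sigma$ lies at or beyond the first expansionary \emph{node} (the point where scope first rises to $2$), that is not what you wrote, and more importantly it is not obvious. What actually forces $\sigma$ away from low-scope nodes is that $\sigma$ is constrained to lie in $\mathbb{T}_{e-1}\treeres{\pi(\mc{P}_{e-1})}{\succ\varnothing}$, and $\pi(\mc{P}_{e-1})$ is chosen as a \emph{main child} in $\mathbb{T}_{e-1}$; one then has to trace through how main children interact with scope in a tree that may itself have been built by a finitary re-rooting (which shifts scopes down). The paper's own proof of (2) silently elides this root case (and contains an index typo, writing $\scope_e(\sigma)-1$ where $\scope_e(\sigma^*)-1$ is meant), so you deserve credit for noticing there is something to check; but the argument you supply for it does not work as stated and needs to be replaced by the correct bookkeeping about where $\sigma$ sits relative to the main-child path from the root of $\mathbb{T}_{e-1}$.
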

\begin{proof}
	We argue by induction on $e$. $\mathbb{T}_{-1}$ is an admissible tree. Given $\mathbb{T}_e$ total and admissible, if $\mc{M}_{e+1}$ has the infinitary outcome then $\mathbb{T}_{e+1}$ is defined from $\mathbb{T}_e$ using the \texttt{Procedure}, which is successful, and hence by Lemma \ref{lem:admissible} is admissible.
	
	So suppose that $\mc{M}_{e+1}$ has the finitary outcome, and $\mathbb{T}_{e+1}$ is the tree $\mathbb{T}_e \treeres{\sigma}{\succ f}$ for some $\sigma \in \mathbb{T}_{e-1} \treeres{\pi_s (\mc{P}_{e-1})}{\succ \varnothing}$ with $\ell_{e-1}(\sigma) = \top$.	The labels $\ell_e$ of the tree $\mathbb{T}_e^{\xi \concat \sigma}$ are defined by setting:
	\begin{itemize}
		\item $\scope_e(\sigma) = 1$ and $\ell_e(\sigma) = \top$;
		\item for $\tau \neq \sigma$, $\scope_e(\tau) = \scope_{e-1}(\tau)-1$ and $\ell_e(\tau) = \top$ if $\ell_{e-1}(\tau) = \top$, or $\ell_e(\tau) = \eta$ if $\ell_{e-1}(\tau) = f\eta$.
	\end{itemize}
	We must argue that $\mathbb{T}\treeres{\sigma}{\succ f}$ is admissible:
	\begin{enumerate}
		\item Each $\sigma \in \mathbb{T}_{e+1}$ has two main children $\sigma^*$ and $\sigma^{**}$, namely the same two main children of $\sigma$ in $\mathbb{T}_e$.  We have $\ell_{e+1}(\sigma^*) = \ell_{e}(\sigma^*) - 1 \succeq \ell_{e}(\sigma) - 1 = \ell_{e+1}(\sigma)$ and similarly for $\sigma^{**}$.
		\item If $\sigma^*$ is a child of $\sigma$ on $\mathbb{T}_{e+1}$, then $\sigma^*$ is a child of $\sigma$ on $\mathbb{T}_e$ and $\scope_{e+1}(\sigma^*) = \scope_{e}(\sigma) - 1 \geq \scope_{e}(\sigma) - 1 = \scope_{e+1}(\sigma)$.
		\item For each $n$, each path through $\mathbb{T}_{e+1}$ is also a path through $\mathbb{T}_e$, and hence contains a node $\sigma$ with $\ell_{e+1}(\sigma) = \ell_e(\sigma) = \top$ and $\scope_{e+1}(\sigma) = \scope_{e}(\sigma) - 1 \geq n$.\qedhere
	\end{enumerate}
\end{proof}

\section{Verification}

In this section, we check that the $A$ constructed above is non-computable, of minimal degree, and low for speed.

\subsection{Non-computable}

We chose the initial segment $A_{3e+2}$ of $A$ such that it differs from the $e$th c.e.\ set. Thus $A$ is not computable.

\subsection{Minimal degree}

Recall that we write $\mathbb{T}_e$ for $\mathbb{T}_e^{\pi_{\mc{M}_e}}$, the tree produced by the true outcome of $\mc{M}_e$, and we sometimes write $\mc{M}_e$ for the instance of $\mc{M}_e$ acting along the true sequence of outcomes. We show that $A$ is of minimal degree by showing that it lies on the trees $\mathbb{T}_e$ which are either $e$-splitting or force $\Phi_e^A$ to be partial or computable.

\begin{lemma}\label{lem:A-on-tree}
	For all $e$, $A \in [\mathbb{T}_e]$.
\end{lemma}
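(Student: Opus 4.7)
The plan is to prove the set-theoretic containment $\mathbb{T}_{e} \subseteq \mathbb{T}_{e-1}$ (as subsets of $2^{<\omega}$) for every $e \ge 0$, and then to observe that the $\mc{P}_e$-stages of the construction of $A$ pin down a cofinal chain of initial segments of $A$ lying on $\mathbb{T}_e$. Combining these two observations immediately yields the lemma.

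For the containment, I would split on the true outcome of $\mc{M}_e$. If the outcome is infinitary, then $\mathbb{T}_e$ is the output of \texttt{Procedure($e$,$\sigma$,$\mathbb{T}_{e-1}$)} with $\sigma = \pi(\mc{P}_{e-1})$ (or $\sigma$ the root of $\mathbb{T}_{-1}$ when $e = 0$), and the Remark immediately following Lemma \ref{lem:split} states that every node of this output lies on $\mathbb{T}_{e-1} \treeres{\sigma}{\succ \varnothing}$, hence on $\mathbb{T}_{e-1}$. If the outcome is finitary with witness $\sigma$, then by definition $\mathbb{T}_e = \mathbb{T}_{e-1} \treeres{\sigma}{\succ f}$, which is directly a subset of $\mathbb{T}_{e-1}$. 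Iterating, $\mathbb{T}_{e'} \subseteq \mathbb{T}_d$ as sets of nodes whenever $e' \ge d$.

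For the cofinal chain, the stage $s+1 = 3e+2$ of the construction explicitly defines $A_{3e+2}$ to be a main child of $A_{3e+1}$ on $\mathbb{T}_e$, so $A_{3e+2}$ is a node of $\mathbb{T}_e$. Because this step properly extends $A$, the lengths $|A_{3e+2}|$ strictly increase with $e$ and so tend to infinity.

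To conclude, fix any $d$. For every $e \ge d$, the node $A_{3e+2}$ lies on $\mathbb{T}_e$ and hence, by the containment, on $\mathbb{T}_d$; these are arbitrarily long initial segments of $A$, so $A \in [\mathbb{T}_d]$. The only step with any real content is the containment in the infinitary case, which is furnished by the Remark and ultimately reflects the fact that \texttt{Procedure} builds $T[n+1]$ by choosing extensions already present on $T_{e-1}$.
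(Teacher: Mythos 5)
Your proposal is correct and follows essentially the same route as the paper: both proofs reduce the claim to (i) the nesting $\mathbb{T}_{e'} \subseteq \mathbb{T}_e$ for $e' \geq e$ and (ii) the observation that the construction of $A$ chooses arbitrarily long initial segments $A_{3e+2}$ on $\mathbb{T}_e$. The only difference is that you spell out the nesting by casing on the outcome of $\mc{M}_e$ and invoking the Remark after Lemma~\ref{lem:split}, whereas the paper asserts $\mathbb{T}_{e'} \subseteq \mathbb{T}_e$ without comment; this is a harmless expansion, not a different argument.
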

\begin{proof}
	Note that for each $e$, $A_{3e-1}$ is the outcome of $\mathcal{P}_{e-1}$ and so it is the root node of $\mathbb{T}_e$. Then we choose $A_{3e} \preceq A_{3e+1} \preceq A_{3e+2}$ in $\mathbb{T}_e$. Since for each $e' \geq e$, $A_{e'} \in \mathbb{T}_{e'} \subseteq \mathbb{T}_{e}$, the lemma follows.
\end{proof}

\begin{lemma}
	$A$ is a minimal degree.
\end{lemma}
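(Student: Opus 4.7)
The plan is to reduce the claim to verifying each minimality requirement $\mc{M}_e$: assuming $\Phi_e^A$ is total, show that $\Phi_e^A$ is either computable or computes $A$. Together with non-computability (previous subsection), this yields that $A$ has minimal Turing degree. Fix $e$. By Lemma~\ref{lem:A-on-tree} we have $A \in [\mathbb{T}_e]$, so I split on the true outcome of $\mc{M}_e^{\pi_{3e}}$.

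\textbf{Finitary outcome.} If the outcome is a node $\sigma$ then, by the case split at stage $s+1 = 3e$ of the construction, $\mathbb{T}_e = \mathbb{T}_{e-1}\treeres{\sigma}{\succ f}$ satisfies one of: (1) there is $n$ with $\Phi_e^\tau(n)\uparrow$ for every $\tau \in \mathbb{T}_e$, or (2) any two convergent values $\Phi_e^{\tau_1}(x), \Phi_e^{\tau_2}(x)$ with $\tau_1,\tau_2 \in \mathbb{T}_e$ agree. Under (1), totality of $\Phi_e^A$ would fail at $n$, since some initial segment of $A$ lying in $\mathbb{T}_e$ would otherwise witness convergence, contradicting the hypothesis. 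Under (2), I define $R(x) := \Phi_e^\tau(x)$ for any $\tau \in \mathbb{T}_e$ with $\Phi_e^\tau(x)\downarrow$; totality of $\Phi_e^A$ guarantees such $\tau$ exists (any sufficiently long initial segment of $A$), and (2) makes $R$ well-defined and equal to $\Phi_e^A$. Since $\mathbb{T}_e$ is computable, $R$ is computable.

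\textbf{Infinitary outcome.} Now $\mathbb{T}_e$ is the tree produced by \texttt{Procedure}, $e$-splitting in the weak sense that any two paths eventually $e$-split. I would reconstruct $A$ from the oracle $\Phi_e^A$ by inductively maintaining initial segments $\sigma_0 \prec \sigma_1 \prec \cdots$ of $A$ on $\mathbb{T}_e$: given $\sigma_n$, choose $j$ with $e_j > |\sigma_n|$, enumerate the finitely many extensions $\tau_1, \ldots, \tau_k$ of $\sigma_n$ at level $e_j$ on $\mathbb{T}_e$, and pick any extension $\tau_i^*$ of each at level $e_{j+1}$. By the earlier lemma stating that extensions at the $(j{+}1)$st expansionary level of distinct predecessors at the $j$th expansionary level pairwise $e$-split, the $\tau_i^*$ pairwise $e$-split, at witnesses $x_{i\ell}$. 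Consulting $\Phi_e^A$ at each $x_{i\ell}$ identifies the unique $i$ such that $\Phi_e^{\tau_i^*}$ is compatible with $\Phi_e^A$; setting $\sigma_{n+1} := \tau_i$ gives the next initial segment, so $A \leq_T \Phi_e^A$.

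\textbf{Main obstacle.} The delicate case is the infinitary one. The tree $\mathbb{T}_e$ built here does not satisfy the classical Spector property that the two children of each node $e$-split---this was intentionally weakened to accommodate lowness---so the usual bit-by-bit recovery of $A$ from $\Phi_e^A$ does not apply directly, and one could worry that the absence of uniform splitting between siblings breaks the reduction. The algorithm above circumvents this by working between consecutive expansionary levels, where the expansionary-splitting lemma provides enough simultaneously $e$-splitting extensions to isolate the branch taken by $A$ effectively from $\Phi_e^A$.
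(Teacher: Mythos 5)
Your overall decomposition matches the paper's: the finitary-outcome case (using $A \in [\mathbb{T}_e]$ from Lemma~\ref{lem:A-on-tree} and the dichotomy fixed at stage $3e$ of the construction of the true path) is handled exactly as in the paper, and in the infinitary case the paper likewise reduces everything to the fact that $A$ lies on a computable, finitely branching tree any two of whose paths $e$-split. The genuine gap is in your recovery algorithm for the infinitary case. You pick a single extension $\tau_i^*$ of each candidate $\tau_i$ at the next expansionary level and declare the correct candidate to be ``the unique $i$ such that $\Phi_e^{\tau_i^*}$ is compatible with $\Phi_e^A$'' at the pairwise splitting witnesses $x_{i\ell}$. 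This selection rule can fail. Let $\tau_{i_0}$ be the candidate actually on $A$. The node of $A$ at level $e_{j+1}$ is some extension $\mu$ of $\tau_{i_0}$ which need not be your chosen $\tau_{i_0}^*$, and since \emph{every} pair of paths of $\mathbb{T}_e$ $e$-splits --- in particular pairs of paths through the same node $\tau_{i_0}$ --- nothing prevents $\Phi_e^A$ from disagreeing with $\Phi_e^{\tau_{i_0}^*}$ at one of the consulted witnesses $x_{i_0\ell}$. Dually, for a wrong $i$, the argument at which $\Phi_e^{\tau_i^*}$ is guaranteed to disagree with $\Phi_e^A$ (because $\tau_i^*$ $e$-splits with $\mu$) need not be among the finitely many $x_{i\ell}$ you query. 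So the ``compatible'' candidate may fail to exist, fail to be unique, or be wrong; once $\sigma_{n+1}$ is set to a node not on $A$, the induction collapses.

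The repair is the standard compactness/elimination argument, which is what the paper's terse ``hence $\Phi_e^A \geq_T A$'' implicitly invokes: for each candidate $\tau_i$ at level $e_j$, consider \emph{all} of its finitely many extensions at level $e_{j+1}$ and dovetail, for each such extension $\tau$, a search for a refutation, i.e., an $x$ with $\Phi_e^{\tau}(x)\downarrow \neq \Phi_e^A(x)$ (legitimate oracle queries, since $\Phi_e^A$ is total by hypothesis). If $\tau_i$ is not an initial segment of $A$, then by the expansionary-splitting lemma each of its level-$e_{j+1}$ extensions $e$-splits with the node $\mu$ of $A$ at level $e_{j+1}$, hence is eventually refuted; if $\tau_i$ is on $A$, the extension of it lying on $A$ is never refuted. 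So waiting until all but one candidate has had all of its level-$e_{j+1}$ extensions refuted terminates and correctly identifies the node of $A$ at level $e_j$; iterating this gives $A \leq_T \Phi_e^A$, completing the infinitary case.
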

\begin{proof}
	$A$ is non-computable. We must show that $A$ is minimal. Suppose that $\Phi_e^A$ is total. If the outcome of $\mathcal{M}_e^{\pi_{3e}}$ is $\infty$, then $A$ lies on the $e$-splitting tree $\mathbb{T}_e = T$ produced by \texttt{Procedure($e$,$A_{\mc{P}_{e-1}}$,$\mathbb{T}_{e-1}$)} and hence $\Phi_e^A \geq_T A$. If the outcome of $\mathcal{M}_e^{\pi_{3e}}$ is $\sigma$, then $A$ lies on $\mathbb{T}_e = \mathbb{T}_{e-1}\treeres{\sigma}{\succ f}$ and (since $\Phi_e^A$ is total) for all $\tau_1,\tau_2 \in \mathbb{T}_{e-1}\treeres{\sigma}{\succ f}$ and for all $n$,
	\[ \Phi_e^{\tau_1}(n) \downarrow \ \wedge \ \Phi_e^{\tau_2}(n) \downarrow \quad \longrightarrow \quad  \Phi_e^{\tau_1}(n) = \Phi_e^{\tau_2}(n).\]
	Thus $\Phi_e^A$ is computable.
\end{proof}

\subsection{Low-for-speed}

Our final task is to show that $A$ is low-for-speed. Because we now have to deal with running times, we need to be a bit more precise about the construction of the trees $\mathbb{T}$ in Section \ref{sec:min-req}. Fixing a particular set of parameters for \texttt{Procedure($e$,$\rho$,$T_{e-1}$)}, one can check that the $s$th stage takes time polynomial in $s$ and $e$. (If the leaves of $T$ have been designated \textit{waiting}, then we charge the time required to wait for $T_{e-1}[s]$ to be defined to stage $s+1,s+2,\ldots$.) In checking this, it is important to note that because all of these trees are subtrees of $\mathbb{T}_{-1}$, there are only polynomially in $s$ many elements of each tree of length (as a binary string) at most $s$. Thus by dovetailing all of the procedures, we can ensure that the $s$th stage of each instance of \texttt{Procedure} takes time polynomial in $s$; the particular polynomial will depend on the parameters for \texttt{Procedure}.

As we build all of the trees $\mathbb{T}$, we keep track of them in an easy-to-query way (such as using pointers) so, e.g., querying whether an element is in a tree can be done in time polynomial in the length (as a binary string) of that element. Again, we use the fact that all of these elements are in $\mathbb{T}_{-1}$.

\medskip

Now we will define the simulation procedure. Fix a lowness requirement $\mc{L}_e$. Define $\eta = \Delta(\pi_{\mc{L}}) \in \{f,\infty\}^{e+1}$; $\eta$ is the sequence of guesses, $f$ or $\infty$, at the outcomes of $\mc{M}_0,\ldots,\mc{M}_e$. Write $\eta_{> i}$ for the final segment $\la \eta(i+1),\ldots,\eta(e) \ra$ of $\eta$, the guesses at $\mc{M}_{i+1},\ldots,\mc{M}_e$.

Write $\scope_i$ and $\ell_{i}$ for the scope and labeling function on $\mathbb{T}_i$. Let $\rho_1,\ldots,\rho_k$ be incomparable nodes on $\mathbb{T}_e$ such that (a) every path on $\mathbb{T}_e$ passes through some $\rho_i$, (b) each $\rho_i$ has $\ell_e(\rho_i) = \top$, and (c) for each $i$ and each $e' \leq e$, $\scope_{e'}(\rho_i) \geq e-e'$. We can find such $\rho_i$ because $\mathbb{T}_e$ is admissible. (Think of the $\rho_i$ as an open cover of $\mathbb{T}_e$ by nodes whose scope, in every $\mathbb{T}_{e'}$ ($e' \leq e$), includes $\mc{L}_e$.) For each $i$, we define a simulation $\Xi_{e,i}$ which works for extensions of $\rho_i$. (It will be non-uniform to know which $\Xi_{e,i}$ to use to simulate $A$, as we will need to know which $\rho_i$ is extended by $A$.) Fix $i$, for which we define the simulation $\Xi = \Xi_{e,i}$:

\medskip

\noindent \textit{Simulation $\Xi = \Xi_{e,i}$:} Begin at stage $0$ with $\Xi(x) \uparrow$ for all $x$. At stage $s$ of the simulation, for each $\sigma \in \mathbb{T}_{-1}$ with $|\sigma| < s$ and $\sigma \succeq \rho_i$, check whether, for each $e' \leq e$, if $\mathbb{T}_{e'}[n]$ is the greatest level of the tree $\mathbb{T}_{e'}$ defined by stage $s$ of the construction of the trees $\mathbb{T}$, then either:
\begin{itemize}
	\item $\sigma$ is on $\mathbb{T}_{e'}[n]$ and $\sigma \in \mathbb{T}_{e'} \treeres{\rho_i}{\succ \eta_{>e'}}$, or
	\item $\sigma$ extends a leaf $\sigma'$ of $\mathbb{T}_{e'}[n]$, with $\sigma' \in \mathbb{T}_{e'} \treeres{\rho_i}{\succ \eta_{>e'}}$, and:
	\begin{itemize}
		\item[($*$)] if $\mathbb{T}_{e'}$ is defined using \texttt{Procedure} ($\mc{M}_{e'}$ has the infinitary outcome), \[ \pred_{\scope_{e'}(\sigma')}(\ell_{e'}(\sigma')) = \eta_{> e'},\]
		and $\sigma'$ has at stage $s$ been designated \textit{waiting with main children $\sigma^*$ and $\sigma^{**}$}, then $\sigma$ extends or is extended by either $\sigma^*$ or $\sigma^{**}$.
	\end{itemize} 
\end{itemize}
If for some $\sigma$ this is true for all $e' \leq e$, then for any $k < s$ with $\Psi^{\sigma}_s(k) \downarrow$, set $\Xi(k) = \Psi^{\sigma}_s(k)$ if it is not already defined.

\medskip

The idea behind the condition ($*$) is that if $\sigma'$ has been designated \textit{waiting}, this is a warning that the secondary children of $\sigma$ will not be simulated by any lowness requirement with guess $\preceq \eta_{> e'}$. So, if $\mc{L}_{e}$ is such a lowness requirement, and if $\sigma$ is along a secondary child of $\sigma'$, then we should not simulate $\sigma$.

\medskip

When we say stage $s$ of the construction of the tree $\mathbb{T}_{e'}$, we mean stage $s$ in \texttt{Procedure} if that is how $\mathbb{T}_{e'}$ is defined, or if $\mc{M}_{e'}$ has a finitary outcome then that part of $\mathbb{T}_{e'}$ which can be defined from stage $s$ in the construction of $\mathbb{T}_{e'-1}$. Recall that we can run these constructions up to stage $s$ in time polynomial in $s$. Thus:

\begin{remark}\label{rem:sim-poly}
	Stage $s$ of the simulation can be computed in time polynomial in $s$. (The polynomial may depend on $e$.) This is because there are polynomially many in $s$ nodes $\sigma \in \mathbb{T}_{-1}$ with $|\sigma| < s$.
\end{remark}

\medskip

The next series of lemmas are proved in the context above of a fixed $e$, with $\rho_1,\ldots,\rho_k$. If $e = \la e_1,e_2 \ra$, we write $\Psi$ for $\Psi_e$. Fix $j$ such that $A$ extends $\rho_j$.

\medskip

If $\mc{L}_e$ has outcome $\infty$, then we need $\Xi_e$ to include the initial segments of $A$ in the computations that it simulates. First we prove that the initial segments of $A$ have the right labels to be simulated.

\begin{lemma}\label{lem:result-watched}
	If $\pi(\mc{L}_e) = \infty$, for each $e' \leq e$, $A \in [\mathbb{T}_{e'} \treeres{\rho_j}{\succ \eta_{>e'}}]$ for some $i$.
\end{lemma}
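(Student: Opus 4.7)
\noindent My plan is downward induction on $e'$, from $e$ down to $0$, using Lemma \ref{lem:n-to-n-1} to transfer the labeling condition from $\mathbb{T}_{e'+1}$ to $\mathbb{T}_{e'}$. The inductive hypothesis is $A \in [\mathbb{T}_{e'+1}\treeres{\rho_j}{\succ \eta_{>e'+1}}]$ and the goal is $A \in [\mathbb{T}_{e'}\treeres{\rho_j}{\succ \eta_{>e'}}]$; observe $\eta_{>e'} = \eta(e'+1)\,\eta_{>e'+1}$, so the step splits on the outcome of $\mc{M}_{e'+1}$. Before that main induction I would establish, by a parallel downward induction, the auxiliary claim that $\ell_{e'}(\rho_j) = \top$ or $\ell_{e'}(\rho_j) \succ \eta_{>e'}$ for every $e' \leq e$: in the finitary-outcome case the label relabeling $\mu \mapsto f\mu$ (with $\top$ fixed) converts $\ell_{e'+1}(\rho_j) \succ \eta_{>e'+1}$ into $\ell_{e'}(\rho_j) \succ f\eta_{>e'+1} = \eta_{>e'}$; in the infinitary-outcome case $\rho_j$ is either the root of $\mathbb{T}_{e'+1}$ (giving $\ell_{e'}(\rho_j) = \top$ by the input condition to \texttt{Procedure}) or a descendant for which the construction forces $\ell_{e'}(\rho_j) \succ f$ in the main-child case (hence $\succ \infty\eta_{>e'+1} = \eta_{>e'}$ because $f \succ \infty$ in the lex order), or $\ell_{e'}(\rho_j) \succeq \eta'$ in the secondary-child case, where a short examination of the label assignment rule forces the ``greatest $\eta'$'' with $\rho_j \in \mathbb{T}_{e'}\treeres{\sigma}{\succeq \eta'}$ to dominate $\infty\eta_{>e'+1}$.

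\noindent The base case $e' = e$ of the main induction asks that no node on the path of $A$ past $\rho_j$ in $\mathbb{T}_e$ has label $\varnothing$. Starting from $\ell_e(\rho_j) = \top$ and walking through each extension step (the $\mc{P}_e$-step moves to a main child inheriting $\top$, and each later extension for $\mc{M}_{e''}, \mc{L}_{e''}, \mc{P}_{e''}$ with $e'' > e$ is already confined by the higher-priority construction to a main-child chain or to a $\succ \varnothing$ subtree) verifies this directly. For the inductive step, the finitary-outcome case is immediate: $\mathbb{T}_{e'+1}$ and $\mathbb{T}_{e'}$ agree on paths past $\sigma$ up to the $f$-shift of labels, and $\ell_{e'+1}(\tau) \succ \eta_{>e'+1}$ translates at once to $\ell_{e'}(\tau) \succ f\eta_{>e'+1} = \eta_{>e'}$. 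In the infinitary-outcome case, I apply Lemma \ref{lem:n-to-n-1} with $\sigma = \rho_j$ and $\eta = \eta_{>e'+1}$ to each node $\tau$ of $\mathbb{T}_{e'+1}$ on the path of $A$ past $\rho_j$: the inductive hypothesis gives $\tau \in \mathbb{T}_{e'+1}\treeres{\rho_j}{\succ \eta_{>e'+1}}$, and the lemma returns $\tau \in \mathbb{T}_{e'}\treeres{\rho_j}{\succ \infty\eta_{>e'+1}} = \mathbb{T}_{e'}\treeres{\rho_j}{\succ \eta_{>e'}}$; letting these $\tau$ range cofinally along $A$ in $\mathbb{T}_{e'+1}$ assembles the desired membership, since every node of $\mathbb{T}_{e'}$ on the path of $A$ past $\rho_j$ lies between $\rho_j$ and some such $\tau$.

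\noindent The principal obstacle I foresee is the stated hypothesis $\ell_{e-1}(\sigma) = \top$ of Lemma \ref{lem:n-to-n-1}, whereas the auxiliary claim yields only $\ell_{e'}(\rho_j) \succ \eta_{>e'}$ in general. Inspecting that proof, however, the weaker hypothesis is enough: the main-child portion of any path from $\rho_j$ in $\mathbb{T}_{e'}$ inherits labels $\succeq \ell_{e'}(\rho_j) \succ \eta_{>e'}$, the intermediate $\sigma'$ is $\top$-labeled as in the original proof, and the trailing $\succ f$ segment automatically exceeds $\infty\eta_{>e'+1}$ since $f \succ \infty$; the corresponding verification for secondary children uses the same ``greatest $\eta'$'' bookkeeping as in the auxiliary claim. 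Making this refinement explicit, so that Lemma \ref{lem:n-to-n-1} can be applied at $\rho_j$ directly rather than at some $\top$-labeled predecessor, is the essential technical content beyond the case analysis.
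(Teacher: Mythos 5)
Your proposal follows essentially the same route as the paper: downward induction on $e'$, applying Lemma \ref{lem:n-to-n-1} at $\rho_j$ when $\mc{M}_{e'}$ is infinitary and the $f$-prefix relabeling when it is finitary; your base case $e'=e$ is the paper's observation that $\mathbb{T}_{e+1}\subseteq\mathbb{T}_e\treeres{A_{\mc{P}_e}}{\succ\varnothing}$. The extra material you supply is worthwhile, though: the paper invokes Lemma \ref{lem:n-to-n-1} at $\sigma=\rho_j$ without checking the stated hypothesis $\ell_{e'-1}(\rho_j)=\top$, and that can genuinely fail --- the $\rho_j$ are only guaranteed $\ell_e(\rho_j)=\top$, and a $\top$-labeled node of $\mathbb{T}_e$ (an expansionary main child of \texttt{Procedure}) lands in a subtree $\treeres{\sigma'}{\succ f}$ of $\mathbb{T}_{e-1}$, giving only $\ell_{e-1}(\rho_j)\succ f$. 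Your two fixes --- the auxiliary claim that $\ell_{e'}(\rho_j)$ is $\top$ or $\succ\eta_{>e'}$ at every level, and the observation that the proof of Lemma \ref{lem:n-to-n-1} needs only $\ell_{e-1}(\sigma)\succ\infty\eta$ (the main-chain segment merely has to stay $\succ\infty\eta$, and the secondary-child case uses no hypothesis on $\sigma$ at all) --- are exactly right, and the same relaxation is what makes the ``it suffices to prove for children'' reduction inside Lemma \ref{lem:n-to-n-1} itself chain correctly. Your analysis of the auxiliary claim in the infinitary case is sound even though $\rho_j$ may be an arbitrary descendant of the root of $\mathbb{T}_{e'+1}$, because $\ell_{e'}(\rho_j)$ is determined by a single \texttt{Procedure} step at $\rho_j$'s immediate parent, which is all your case split examines; alternatively Lemma \ref{lem:n-to-n-1-eq}, which carries no $\top$-hypothesis, gives the same bound $\ell_{e'}(\rho_j)\succeq\infty\ell_{e'+1}(\rho_j)$ directly.
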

\begin{proof}
	Let $\sigma = A_{\mc{M}_e}$ be the root of $\mathbb{T}_e$. Since $\pi(\mc{L}_e) = \infty$, $A_{\mc{L}_e} = A_{\mc{M}_e}$. Let $\sigma^* = \xi(\mc{P}_{e}) = A_{\mc{P}_e}$. Then, by construction, $\sigma^*$ a main child of $\sigma$. $A$ is a path through $\mathbb{T}_{e+1}$ extending $\sigma^*$, and $\mathbb{T}_{e+1}$ is one of the following two trees, depending on the outcome of $\mc{M}_{e+1}$:
	\begin{enumerate}
		\item the labeled tree $T$ produced by \texttt{Procedure($e+1$,$\sigma^*$,$\mathbb{T}_e$)}, which is a subtree of $\mathbb{T}_e \treeres{\sigma^*}{\succ \varnothing}$; or
		\item the tree $\mathbb{T}_{e} \treeres{\tau}{\succ f}$ for some $\tau \in \mathbb{T}_e \treeres{\sigma^*}{ \succ \varnothing}$ with $\ell_e(\tau) = \top$.
	\end{enumerate}
	In either case, $A \in [\mathbb{T}_{e} \treeres{\rho_j}{\succ \varnothing}]$, and $\varnothing = \eta_{> e}$. (Note that $\rho_j$ extends $\sigma$ and is extended by $A$.)
	
	Now we argue backwards by induction. Suppose that $A \in [\mathbb{T}_{e'} \treeres{\rho_j}{\succ \eta_{>e'}}]$. We want to argue that $A \in [\mathbb{T}_{e'-1} \treeres{\rho_j}{\succ \eta_{>e'-1}}]$. We have two cases, depending on the outcome of $\mc{M}_e$:
	\begin{itemize}
		\item The outcome of $\mc{M}_e$ is $\infty$. Then $\eta_{>e'-1} = \infty \eta_{>e'}$ and $\mathbb{T}_{e'}$ is the labeled tree $T$ produced by \texttt{Procedure($e'$,$A_{\mc{P}_{e'-1}}$,$\mathbb{T}_{e'-1}$)}. By Lemma \ref{lem:n-to-n-1}, given $\sigma \in \mathbb{T}_{e'}$ and $\sigma^* \in \mathbb{T}_{e'} \treeres{\sigma}{\succ \eta_{>e'}}$, we have that $\sigma^* \in \mathbb{T}_{e'-1} \treeres {\sigma}{\succ \infty \eta_{>e'}} = \mathbb{T}_{e'-1} \treeres {\sigma}{\succ \eta_{>e'-1}}$. As $A \in [\mathbb{T}_e\treeres{\rho_j}{\succ \eta_{> e'}}]$ we have $A \in [\mathbb{T}_{e'-1} \treeres{\rho_j}{\succ \eta_{>e'-1}}]$. 
		
		\item The outcome of $\mc{M}_e$ is $f$. Then $\eta_{>e'-1} = f \eta_{>e'}$ and $\mathbb{T}_{e'}$ is the tree $\mathbb{T}_{e'-1} \treeres{\tau}{\succ f}$ for some $\tau \in \mathbb{T}_{e-1} \treeres{A_{\mc{L}_{e'-1}}}{\succ \varnothing}$ with $\ell_{e-1}(\tau) = \top$. The labels on $\mathbb{T}_{e'}$ are defined so that if $\sigma \in \mathbb{T}_{e'}$, then $\ell_{e'-1}(\sigma) = f \ell_{e'}(\sigma)$ or $\ell_{e'-1}(\sigma) = \top$. Thus, as $A \in [\mathbb{T}_{e'}\treeres{\rho_j}{\succ \eta_{>e'}}]$, and $\mathbb{T}_{e'} = \mathbb{T}_{e'-1} \treeres{\tau}{\succ f}$, we get that $A \in [\mathbb{T}_{e'-1} \treeres{\rho_j}{\succ f\eta_{>e'}}] = [\mathbb{T}_{e'-1} \treeres{\rho_j}{\succ \eta_{>e'-1}}]$.\qedhere
	\end{itemize}
\end{proof}

Now we prove that if $\Psi^A(r)$ converges, then the simulation $\Xi(r)$ converges as well, though it is possible that it will have a different value if there was some other computation $\Psi^\sigma(r)$ which converged before $\Psi^A(r)$ did. Moreover, the simulation will not be too much delayed.

\begin{lemma}\label{lem:result-watched}
	If $\pi(\mc{L}_e) = \infty$, and $\Psi^A(r) \downarrow$, then $\Xi(r) \downarrow$. Moreover, there is a polynomial $p$ depending only on $e$ such that if $\Psi_s^A(r) \downarrow$, then $\Xi_{p(s)}(r) \downarrow$.
\end{lemma}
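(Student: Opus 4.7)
Let $u$ be the shortest prefix of $A$ in $\mathbb{T}_{-1}$ extending the use of $\Psi^A_s(r)$; since $\mathbb{T}_{-1}$ has nodes only at lengths $2^k-1$, we have $|u|\leq 2s$. The plan is to exhibit, at some stage $t=p(s)$ polynomial in $s$, a node $\sigma\in\mathbb{T}_{-1}$ with $\rho_j\preceq\sigma\prec A$, $|\sigma|<t$, and $\Psi^\sigma_t(r)\downarrow$, which passes the simulation's tests for each $e'\leq e$; then $\Xi_t(r)\downarrow$ (either defined from $\sigma$ or from an earlier candidate). The label condition $\sigma\in\mathbb{T}_{e'}\treeres{\rho_j}{\succ\eta_{>e'}}$ is automatic from the previous lemma, since any prefix of $A$ on $\mathbb{T}_{e'}$ lies in that restricted subtree.

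For each $e'\leq e$, let $\tau_{e'}$ denote the deepest prefix of $A$ on $\mathbb{T}_{e'}$ that has been placed there by stage $t$ of the construction; this is exactly the leaf of $\mathbb{T}_{e'}[n_{e'}(t)]$ lying along $A$. Take $\sigma$ to be the longest of $u,\tau_0,\tau_1,\ldots,\tau_e$ (all are prefixes of $A$, hence linearly ordered by $\preceq$). Since $\sigma\succeq u$, we have $\Psi^\sigma_t(r)\downarrow$ with value $\Psi^A_s(r)$ provided $t\geq s$. For each $e'$: because $|\sigma|\geq|\tau_{e'}|$, either $\sigma=\tau_{e'}$, in which case $\sigma$ lies on $\mathbb{T}_{e'}[n_{e'}(t)]$ and condition (a) of the simulation applies, or $\sigma$ strictly extends $\tau_{e'}$, a leaf of $\mathbb{T}_{e'}[n_{e'}(t)]$, and condition (b) applies. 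For the side condition $(*)$ when condition (b) is used: if $\tau_{e'}$ has been designated waiting with main children $\sigma^*,\sigma^{**}$ and the pred-condition is triggered, then secondary children of $\tau_{e'}$ carry labels $\preceq\eta_{>e'}$, which the path condition $A\in[\mathbb{T}_{e'}\treeres{\rho_j}{\succ\eta_{>e'}}]$ forbids; so $A$ must extend one of $\sigma^*,\sigma^{**}$, and then $\sigma\prec A$ extends or is extended by it.

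For the polynomial bound, one estimates $|\sigma|\leq\max(|u|,\max_{e'}|\tau_{e'}|)$, where $|u|\leq 2s$ and each $|\tau_{e'}|$ is bounded by the depth of $\mathbb{T}_{e'}$ at stage $t$, which in turn is at most $t$ since stage $t$ of \texttt{Procedure} only inspects nodes of $\mathbb{T}_{-1}$ of length $\leq t$. Combined with Remark~\ref{rem:sim-poly}, this shows that choosing $t=p(s)$ for a linear polynomial $p$ with coefficients depending on $e$ makes $|\sigma|<t$, so $\sigma$ is considered by the simulation at stage $t$ and $\Xi_{p(s)}(r)\downarrow$. The most subtle point is bounding $\max_{e'}|\tau_{e'}|$ linearly in $t$: this follows by a short induction on $e'$, exploiting that stages still count while \texttt{Procedure} is waiting (so the depth of each $\mathbb{T}_{e'}$ grows at at most a linear rate in the construction stage), which in turn rests on the admissibility of $\mathbb{T}_{e'-1}$ and Lemma~\ref{lem:all-trees-admissible}.
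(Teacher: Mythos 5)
The paper's proof picks $\sigma$ to be an initial segment of $A$ that lies \emph{on} $\mathbb{T}_e$ and then invokes the simulation at a stage $s$ with $\Psi^\sigma_s(r)\downarrow$; you instead take $\sigma$ to be the longest of the use-prefix $u$ and the deepest already-constructed prefixes $\tau_{e'}$ of $A$ on each $\mathbb{T}_{e'}$. This is a genuinely different choice, and for the polynomial-bound half of the lemma it is arguably the right one. The paper's choice $\sigma\in\mathbb{T}_e$ is not obviously short: the first node of $\mathbb{T}_e$ on $A$ that extends the use of $\Psi^A_s(r)$ may be placed at an arbitrarily late stage of \texttt{Procedure}, so its length need not be polynomial in $s$; and if $|\sigma|$ exceeds the simulation stage, the simulation never scans $\sigma$ at all. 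The paper glosses over this by writing ``$s$ a stage such that $\Psi^\sigma_s(r)\downarrow$'' without tying this back to the $s$ from the lemma statement. Your $\sigma$ sidesteps the problem: since the simulation's conditions (a)/(b) allow $\sigma$ merely to \emph{extend a leaf} of the constructed part, $\sigma$ need not lie on any $\mathbb{T}_{e'}$, and taking the max of the $\tau_{e'}$ and $u$ guarantees that for each $e'$ exactly one of (a)/(b) holds automatically. Your handling of $(*)$---arguing via $A\in[\mathbb{T}_{e'}\treeres{\rho_j}{\succ\eta_{>e'}}]$ that $A$ cannot pass through a secondary child, so $\sigma\prec A$ is comparable to $\sigma^*$ or $\sigma^{**}$---is correct and essentially equivalent to the paper's, which instead uses $\sigma\in\mathbb{T}_{e'}$ directly.

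Two remaining soft spots, both minor: (i) your estimate $|\tau_{e'}|\leq t$ yields only $|\sigma|\leq t$, not the strict $|\sigma|<t$ that the simulation requires---one should either argue that nodes placed by stage $t$ have length strictly below $t$ or simply pass to stage $t+1$; (ii) the closing inductive claim that each $|\tau_{e'}|$ grows at most linearly in $t$ is gestured at rather than carried out, and depends on details of how ``first $s$-many extensions'' in \texttt{Procedure} is implemented. But the paper is at least as terse on these same points, so on the polynomial bound your argument is a genuine (if incremental) improvement in rigor.
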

\begin{proof}
	By the previous lemma for each $e' \leq e$, $A \in [\mathbb{T}_{e'} \treeres{\rho_j}{\succ \eta_{>e'}}]$. Let $\sigma \in \mathbb{T}_{e}$ be an initial segment of $A$ and $s$ a stage such that $\Psi^\sigma_s(r) \downarrow$. We may assume that $\sigma$ is sufficiently long that $\sigma$ extends $\rho_j$.
	
	Fix $e'$ and let $\mathbb{T}_{e'}[n]$ be the greatest level of the tree $\mathbb{T}_{e'}$ defined by stage $s$. Then, as $A \in [\mathbb{T}_{e'} \treeres{\rho_j}{\succ \eta_{>e'}}]$, $\sigma \in \mathbb{T}_{e'} \treeres{\rho_j}{\succ \eta_{>e'}}$. We check the conditions (for $e'$) from the definition of the simulation $\Xi$. Either $\sigma \in \mathbb{T}_{e'} \treeres{\rho_j}{\succ \eta_{>e'}}[n]$, or some initial segment $\sigma'$ of $\sigma$ is in $\mathbb{T}_{e'} \treeres{\rho_j}{\succ \eta_{>e'}}[n]$. In the second case, let us check that we satisfy ($*$). We only need to check ($*$) in the case that $\mathbb{T}_{e'}$ was defined using \texttt{Procedure},
	\[ \pred_{\scope_{e'}(\sigma')}(\ell_{e'}(\sigma')) = \eta_{> e'},\]
	and $\sigma'$ has at stage $s$ been designated \textit{waiting with main children $\sigma^*$ and $\sigma^{**}$}. Now, if $\sigma \in \mathbb{T}_e$ does not extend $\sigma^*$ or $\sigma^{**}$, then it would extend a secondary child $\sigma^\dagger$ of $\sigma'$ with \[ \ell_{e'}(\sigma^{\dagger}) \preceq \pred_{\scope_{e'}(\sigma')}(\ell_{e'}(\sigma')) = \eta_{> e'}.\]
	This contradicts the fact that $\sigma \in \mathbb{T}_{e'} \treeres{\rho_j}{\succ \eta_{>e'}}$.
		
	Since this is true for every $e' \leq e$, and $\Psi_s^\sigma(r) \downarrow$, the simulation defines $\Xi(r) = \Psi_s^\sigma(r)$ if $\Xi(r)$ is not already defined.
	
	\medskip{}
	
	The simulation defines $\Xi(r)$ at the $s$th stage of the simulation. By Remark \ref{rem:sim-poly}, the $s$th stage of the simulation can be computed in time polynomial in $s$.
\end{proof}

Lemma \ref{lem:result-watched} covers the infinitary outcome of $\mc{L}_e$. For the finitary outcome, we need to see that any computation simulated by $\Xi_e$ is witnessed by a computation on the tree, because the use of such a computation was not removed from the tree.

\begin{lemma}\label{lem:on-tree}
	For all $k$, if $\Xi(r) \downarrow$ then there is $\sigma \in \mathbb{T}_e$, $\sigma \succeq \rho_j$, such that $\Psi^{\sigma}(r) = \Xi(r)$.
\end{lemma}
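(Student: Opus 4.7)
To prove this, I would first unpack the definition of the simulation. Since $\Xi(r) \downarrow$, there is a least construction stage $s$ and a node $\sigma \in \mathbb{T}_{-1}$ with $\sigma \succeq \rho_j$ and $|\sigma| < s$ such that $\Psi^\sigma_s(r) \downarrow = \Xi(r)$ and the simulation's check is satisfied by $\sigma$ at every $e' \leq e$. It suffices to exhibit $\tau \in \mathbb{T}_e$ compatible with $\sigma$ and of length at least $|\sigma|$: then $\tau \succeq \sigma \succeq \rho_j$ and $\Psi^\tau(r) = \Psi^\sigma(r) = \Xi(r)$.

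Specializing the check to $e' = e$ (so $\eta_{>e} = \varnothing$) gives two cases. If $\sigma \in \mathbb{T}_e[n_e]$, take $\tau = \sigma$. Otherwise $\sigma$ extends some leaf $\sigma' \in \mathbb{T}_e[n_e]$ with $\ell_e(\sigma') \succ \varnothing$, and condition $(*)$ for $e' = e$ is in force (possibly only vacuously). By Lemma \ref{lem:all-trees-admissible}, $\sigma'$ is eventually extended in $\mathbb{T}_e$: at some stage $s_0$, \texttt{Procedure} identifies main children $\sigma^*, \sigma^{**}$, and once $\mathbb{T}_{e-1}[s_0]$ is available, adds to $\mathbb{T}_e$ both the main children and every secondary child drawn from $T_{e-1} \treeres{\sigma'}{\succ \varnothing}[s_0]$ incompatible with $\sigma^*, \sigma^{**}$.

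The plan is now a case analysis at $\sigma'$, iterated if needed. If $\sigma$ is incompatible with both $\sigma^*$ and $\sigma^{**}$, then I argue $\sigma$ appears as a secondary child of $\sigma'$. For this I need $\sigma \in T_{e-1} \treeres{\sigma'}{\succ \varnothing}[s_0]$, which I verify by invoking the simulation's check at $e' = e-1$: $\sigma$ lies on $\mathbb{T}_{e-1}$ with label $\succ \eta_{>e-1}$, and combining with the monotonicity of the construction and the relationship $\ell_e \leftrightarrow \ell_{e-1}$ from Lemma \ref{lem:n-to-n-1-eq}, $\sigma$ is a secondary child of $\sigma'$ in $\mathbb{T}_e$, giving $\tau = \sigma$. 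If instead $\sigma$ is comparable with a main child $\sigma^*$, then either $\sigma^* \succeq \sigma$ (take $\tau = \sigma^*$), or $\sigma \succeq \sigma^*$ and we iterate: the simulation's check for $\sigma$ with respect to the new leaf $\sigma^*$ on $\mathbb{T}_e$ is inherited (the label of $\sigma^*$ is either $\top$ or equal to $\ell_e(\sigma')$), so we can rerun the case analysis at $\sigma^*$. Since $|\sigma|$ is finite and each iteration strictly increases the length of the $\mathbb{T}_e$-prefix of $\sigma$ we have secured, the recursion must terminate at a $\tau \in \mathbb{T}_e$ of length at least $|\sigma|$.

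The main obstacle I expect is the timing and bookkeeping around condition $(*)$. When $\sigma'$ has not yet been designated \textit{waiting} at simulation stage $s$ (i.e.\ $s_0 > s$), the condition $(*)$ is vacuous, and I must use the fact that $\sigma$'s check conditions remain satisfied at all stages $\geq s$ in order to argue that the eventual choice of main children still respects $\sigma$. Likewise, when $\pred_{\scope_e(\sigma')}(\ell_e(\sigma')) \neq \varnothing$, $(*)$ for $e' = e$ is vacuous, and I must fall back on the fact that being incompatible with $\sigma^*, \sigma^{**}$ forces $\sigma$ into the secondary-child construction. Cleanly organizing these timing and label cases so that the recursion always either finds $\tau$ or descends to a longer $\mathbb{T}_e$-prefix of $\sigma$ is the delicate part of the proof; the translation lemmas \ref{lem:n-to-n-1} and \ref{lem:n-to-n-1-eq} between the label structures on $\mathbb{T}_{e-1}$ and $\mathbb{T}_e$ are the main tools that make the bookkeeping go through.
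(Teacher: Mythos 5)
The key structural idea you are missing is that the paper's proof works by a bottom-up induction on $i$ from $-1$ to $e$, and this is what makes the ``$\sigma \in \mathbb{T}_{e-1}$'' step actually go through. You try to jump directly to the level $e$ picture and patch up the missing membership in $\mathbb{T}_{e-1}$ by invoking the $e'=e-1$ check, but the check is a disjunction: it allows $\sigma$ to merely extend a leaf of $\mathbb{T}_{e-1}[n]$ without being on $\mathbb{T}_{e-1}$ at all. So the assertion ``$\sigma$ lies on $\mathbb{T}_{e-1}$ with label $\succ \eta_{>e-1}$'' does not follow from the simulation's check condition, and the same problem reappears one level down, and again, all the way to $\mathbb{T}_{-1}$. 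The paper handles this by proving, for each $i$, that there is \emph{some} $\sigma_i$ (possibly a proper extension of the original $\sigma$, but with $\Psi^{\sigma_i}_s(r) = \Xi(r)$) that is actually on $\mathbb{T}_i$ in the right labeled part, and carries the check conditions forward for $e' > i$; the level-$(i+1)$ argument then uses the level-$i$ membership, not the raw check.

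A related gap: even in your iteration within $\mathbb{T}_e$, the node you ultimately land on is allowed to be an extension of $\sigma$ that you have to construct with care, since the secondary children of $\sigma'$ added to $\mathbb{T}_e$ are drawn from a specific snapshot of $\mathbb{T}_{e-1}$ at the stage $\sigma'$ was declared \textit{waiting}, and $\sigma$ itself may not appear there. The paper's proof explicitly handles this (choosing the secondary child to be ``a main child of the main child of\ldots of $\sigma$'' so as to get the required label), whereas your write-up assumes $\sigma$ itself will show up as a secondary child. Condition $(*)$ is also not just a vacuity bookkeeping detail: it is exactly what rules out $\sigma$ having been pushed onto a secondary child with label $\preceq \eta_{>i+1}$, and a complete proof needs to use it non-trivially in the case $\pred_{\scope_{i+1}(\sigma')}(\ell_{i+1}(\sigma')) = \eta_{>i+1}$. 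So while your plan has the right flavor (case analysis at a leaf, extend if the node is not yet reached, iterate), the proposal as written is missing the inductive descent through the whole tower $\mathbb{T}_{-1}, \mathbb{T}_0, \ldots, \mathbb{T}_e$ that the paper uses, and one of your asserted steps is false as stated.
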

\begin{proof}
	Since $\Xi(r) \downarrow$, by definition of the simulation, there is a stage $s$ of the simulation and $\sigma \in \mathbb{T}_{-1}$ with $|\sigma| < s$ such that, for each $e' \leq e$, if $\mathbb{T}_{e'}[n]$ is the greatest level of the tree $\mathbb{T}_{e'}$ defined by stage $s$, then either:
	\begin{enumerate}
		\item $\sigma$ is on $\mathbb{T}_{e'}[n]$ and $\sigma \in \mathbb{T}_{e'} \treeres{\rho_j}{\succ \eta_{>e'}}$, or
		\item $\sigma$ extends a leaf $\sigma'$ of $\mathbb{T}_{e'}[n]$, with $\sigma' \in \mathbb{T}_{e'} \treeres{\rho_j}{\succ \eta_{>e'}}$, and:
		\begin{itemize}
			\item[($*$)] if $\mathbb{T}_{e'}$ is defined using \texttt{Procedure} ($\mc{M}_{e'}$ has the infinitary outcome), \[ \pred_{\scope_{e'}(\sigma')}(\ell_{e'}(\sigma')) = \eta_{> e'},\]
			and $\sigma'$ has at stage $s$ been designated \textit{waiting with main children $\sigma^*$ and $\sigma^{**}$}, then $\sigma$ extends or is extended by either $\sigma^*$ or $\sigma^{**}$.
		\end{itemize} 
	\end{enumerate}
	$\Xi(r)$ was defined to be $\Psi^{\sigma}_s(r)$ for some such $\sigma$.

	We argue by induction on $i \leq e$ that there is a $\sigma \in \mathbb{T}_{i}\treeres{\rho_j}{\succeq \eta_{> i}}$, with the parent of $\sigma$ in $\mathbb{T}_{i}\treeres{\rho_j}{\succ \eta_{> i}}$, with $\Xi(r) = \Psi^{\sigma}_s(r)$ and satisfying, for each $e'$ with $i < e' \leq e$, either (1) or (2). By the previous paragraph, this is true for $i = -1$. If we can show it for $i = e$, then the lemma is proved. All that is left is the inductive step. Suppose that it is true for $i$; we will show that it is true for $i+1$. We have two cases, depending on the outcome of $\mc{M}_{i+1}$.
	
	\begin{case}
		$\pi_{\mc{M}_{i+1}} = \infty$.
	\end{case}

	 Since $\pi_{\mc{M}_{i+1}} = \infty$, $\mathbb{T}_{i+1}$ is the $(i+1)$-splitting tree defined by \texttt{Procedure($i+1$,$A_{\mc{P}_{i}}$,$\mathbb{T}_i$)}. Fix $\sigma$ from the induction hypothesis: $\sigma \in \mathbb{T}_{i}\treeres{\rho_j}{\succeq \eta_{> i}}$, the parent of $\sigma$ is in $\mathbb{T}_{i}\treeres{\rho_j}{\succ \eta_{> i}}$, $\Xi(r) = \Psi^{\sigma}_s(r)$ and satisfies, for each $e'$ with $i < e' \leq e$, either (1) or (2). Since $\Psi^{\sigma}_s(r)$ converges, we have that $|\sigma| \leq s$ and so $\sigma \in \mathbb{T}_{i}[s]$. Let $n$ be the greatest level of $\mathbb{T}_{i+1}$ defined by stage $s$.
	
	At stage $s$, either (1) $\sigma$ is already on $\mathbb{T}_{i+1}\treeres{\rho_j}{\succ \eta_{>i+1}}[n]$, in which case we are done, or (2) $\sigma$ extends a leaf $\sigma'$ of $\mathbb{T}_{i+1}[n]$, with $\sigma' \in \mathbb{T}_{i+1} \treeres{\rho_j}{\succ \eta_{>i+1}}$, and:
	\begin{itemize}
		\item[($**$)] if $\sigma'$ has at stage $s$ been designated \textit{waiting with main children $\sigma^*$ and $\sigma^{**}$} and \[ \pred_{\scope_{i+1}(\sigma')}(\ell_{i+1}(\sigma')) = \eta_{> i+1},\]
		then $\sigma$ extends or is extended by either $\sigma^*$ or $\sigma^{**}$.
	\end{itemize} 
	We argue in case (2).
	
	We have $\ell_{i+1}(\sigma') \succ \eta_{> i+1}$. (If $\sigma' = \rho_j$ we use the fact that $\ell_{i+1}(\rho_j) = \top$.) Now at some stage we define the next level of the tree, $\mathbb{T}_{i+1}[n+1]$. When we do this, the children of $\sigma'$ are:
	\begin{itemize}
		\item the main children $\sigma^*,\sigma^{**}$ of $\sigma'$;
		\item each other $\sigma^\dagger \in \mathbb{T}_i \treeres{\sigma'}{\succ \varnothing}[t]$, where $t$ is the stage of the \texttt{Procedure} at which $\sigma'$ was declared \textit{waiting}. We might have $t < s$ if $\sigma'$ was already declared \textit{waiting} before stage $s$. In this case, ($**$) will come into play.
	\end{itemize}
	We divide into two possibilities, and use ($**$) to argue that these are the only two possibilities.
	\begin{itemize}
		\item[(P1)] There is a child $\sigma''$ of $\sigma'$ such that $\sigma''$ extends $\sigma$.
		\item[(P2)] There is a child $\sigma''$ of $\sigma'$, with $\ell_{i+1}(\sigma'') \succ \eta_{> i+1}$, such that $\sigma$ strictly extends $\sigma''$.
	\end{itemize}
	Since $\sigma \in \mathbb{T}_{e'}$, $\sigma$ must be compatible with (i.e., it extends or is extended by) one of the children of $\sigma'$. If we are not in case (P1), then $\sigma$ extends one of the children of $\sigma'$. If $\sigma$ extends one of the main children $\sigma''$ of $\sigma'$, then we have $\ell_{i+1}(\sigma'') \succeq \ell_{i+1}(\sigma') \succ \eta_{> i+1}$ and are in case (P2). So the remaining possibility is that $\sigma$ extends one of the secondary children $\sigma''$ of $\sigma'$. Now $\sigma'' \in \mathbb{T}_i[t]$, where $t$ is the stage of the \texttt{Procedure} at which $\sigma'$ was declared \textit{waiting}. As $\sigma \in \mathbb{T}_i[s]$, because $\sigma''$ does not extend $\sigma$, it must be that $t < s$. Since $\sigma$ properly extends $\sigma''$, and the parent of $\sigma$ in $\mathbb{T}_i$ is in $\mathbb{T}_{i}\treeres{\rho_j}{\succ \eta_{> i}}$, $\sigma'' \in \mathbb{T}_{i}\treeres{\rho_j}{\succ \eta_{> i}}$. Then, looking at the construction of $\mathbb{T}_{i+1}$, $\ell_{i+1}(\sigma'') \succ \eta_{> i+1}$ unless $\pred_{\scope_{i+1}(\sigma')}(\ell_{i+1}(\sigma')) = \eta_{> i+1}$. (Recall that $\ell_{i+1}(\sigma') \succ \eta_{> i+1}$.) But then ($**$) would imply that $\sigma$ extends a secondary child of $\sigma'$, which we assumed was no the case. Thus $\ell_{i+1}(\sigma'') \succ \eta_{> i+1}$. We have successfully argued that (P1) and (P2) are the only possibilities.

	\medskip
	
	We begin by considering (P1). If $\sigma''$ is a main child of $\sigma'$, then we have $\sigma'' \in \mathbb{T}_{i+1} \treeres{\rho_j}{\succ \eta_{> i+1}}$. If it is a secondary child of $\sigma'$, then there may be many possible choices for $\sigma''$. We have $\sigma \in \mathbb{T}_i\treeres{\rho_j}{\succeq \eta_{>i}}$ and so we could have chosen $\sigma'' \in \mathbb{T}_i\treeres{\rho_j}{\succeq \eta_{>i}}$ (for example, we could choose $\sigma''$ to be a main child of the main child of... of $\sigma$ in $\mathbb{T}_i$). Thus $\ell_{i+1}(\sigma'')$ is at least the minimum of $\eta_{>i+1}$ and $\pred_{\scope_{i+1}(\sigma')}(\ell_{i+1}(\sigma')) \succeq \eta_{> i+1}$. So $\sigma'' \in \mathbb{T}_{i+1} \treeres{\rho_j}{\succeq \eta_{> i+1}}$.
	
	Since $\sigma''$ extends $\sigma$, $\Xi(r) = \Psi^{\sigma''}_s(r)$.
	
	Fix $e'$ with $i+1 < e' \leq e$. Now as $\sigma$ was not on $\mathbb{T}_{i+1}[n]$, it cannot be on that part of $\mathbb{T}_{e'}$ constructed by stage $s$. Thus (1) cannot be true for $e'$ and $\sigma$, and so (2) must be true. Then (2) is also true for $e'$ and the extension $\sigma''$ of $\sigma$.
	
	\medskip
	
	Now consider (P2). There is a sequence of children $\sigma_0 = \sigma',\sigma_1 = \sigma'',\sigma_2,\ldots,\sigma_k$, with $\sigma$ strictly extending $\sigma_{k-1}$ and $\sigma_k$ extending $\sigma$, $\sigma_k$ is an initial segment of $A$, and with $k \geq 2$. We first claim that $\sigma_2$ is a main child of $\sigma_1$, $\sigma_3$ is a main child of $\sigma_2$, and so on, up until $\sigma_{k-1}$ is a main child of $\sigma_{k-2}$. Indeed, when we begin to define $(n+2)$nd level of the tree $\mathbb{T}_{i+1}$, we do so at a stage greater than $s$, and so the secondary children of $\sigma_1$ are at a level at least $s$ in $\mathbb{T}_i$ (while $\sigma \in \mathbb{T}_i[s]$). So any secondary child of $\sigma_1$ compatible with $\sigma$ would extend $\sigma$, and hence be $\sigma_k$. A similar argument works for the children of $\sigma_2$, $\sigma_3$, ..., $\sigma_{k-2}$.
	
	Thus $\ell_{i+1}(\sigma_{k-1}) \geq \ell_{i+1}(\sigma_{k-2}) \geq \cdots \geq \ell_{i+1}(\sigma_{1}) \succ \eta_{>i+1}$. So $\sigma_{k-1}$, the parent of $\sigma_k = \sigma$, has $\sigma_{k-1} \in \mathbb{T}_{i+1} \treeres{\rho_j}{\succ \eta_{> i+1}}$.
	
	Now if $\sigma_k$ is a main child of $\sigma_{k-1}$, then we have $\sigma_k \in \mathbb{T}_{i+1} \treeres{\rho_j}{\succ \eta_{> i+1}}$. If it is a secondary child of $\sigma_{k-1}$, then there may be many possible choices for $\sigma_k$. We have $\sigma \in \mathbb{T}_i\treeres{\rho_j}{\succeq \eta_{>i}}$ and so we could have chosen $\sigma_k \in \mathbb{T}_i\treeres{\rho_j}{\succeq \eta_{>i}}$ (for example, we could choose $\sigma_k$ to be a main child of the main child of... of $\sigma$ in $\mathbb{T}_i$). Thus $\ell_{i+1}(\sigma_k)$ is at least the minimum of $\eta_{>i+1}$ and $\pred_{\scope_{i+1}(\sigma_{k-1})}(\ell_{i+1}(\sigma_{k-1})) \succeq \eta_{> i+1}$. So $\sigma_k \in \mathbb{T}_{i+1} \treeres{\rho_j}{\succeq \eta_{> i+1}}$.
	
	Since $\sigma_k$ extends $\sigma$, we have $\Xi(r) = \Psi^{\sigma_k}_s(r)$.

	Fix $e'$ with $i+1 < e' \leq e$. Now as $\sigma$ was not on $\mathbb{T}_{i+1}[n]$, it cannot be on that part of $\mathbb{T}_{e'}$ constructed by stage $s$. Thus (1) cannot be true for $e'$ and $\sigma$, and so (2) must be true. Then (2) is also true for $e'$ and the extension $\sigma_k$ of $\sigma$.

	\begin{case}
		$\pi_{\mc{M}_{i+1}} = \tau$.
	\end{case}
	
	We have that $\sigma \in \mathbb{T}_i \treeres{\rho_j}{\eta_{>i}}$. $\mathbb{T}_{i+1}$ is  the tree $\mathbb{T}_{i} \treeres{\tau}{\succ f}$ for some $\tau \in \mathbb{T}_i \treeres{\pi_{\mc{P}_i}}{\succ \varnothing}$. The labels $\ell_{i+1}$ of the tree $\mathbb{T}_{i+1}$ are defined from the labels $\ell_{i}$ of the tree $\mathbb{T}_i$ by setting $\ell_{i+1}(\sigma) = \top$ if $\ell_{i} = \top$, or $\ell_{i+1}(\sigma) = \eta$ if $\ell_{i} = f\eta$.
	
	Note that since $\pi_{\mc{M}_{i+1}} = \tau$ is the finitary outcome, $\eta_{> i}$ begins with $f$. Thus $\sigma$ is still on $\mathbb{T}_{i+1}$. Moreover, for each $\tau' \in \mathbb{T}_{i+1}$, $\ell_{i}(\tau') \succeq \eta_{>i}$ if and only if $\ell_{i+1}(\tau') \succeq \eta_{i+1}$. So $\sigma$ is on $\mathbb{T}_{i+1}\treeres{\rho_j}{\succ \eta_{>i+1}}$.
\end{proof}

We now show how to use these lemmas to prove that $A$ is low-for-speed.

\begin{lemma}
	$A$ is low-for-speed.
\end{lemma}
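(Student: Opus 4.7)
The plan is to case-split on the true outcome $\pi(\mc{L}_e)$, writing $e = \la e_1, e_2 \ra$ throughout. If $\pi(\mc{L}_e) = \sigma$ is a finitary outcome, then by the clause for $s+1 = 3e+1$ in the construction of the true path, $\sigma$ was chosen precisely so that $\Psi_{e_1}^\sigma(n) \neq R_{e_2}(n)$ for some $n$, and $A_{s+1} = \sigma \preceq A$. Hence $\Psi_{e_1}^A(n) = \Psi_{e_1}^\sigma(n) \neq R_{e_2}(n)$, so the hypothesis of $\mc{L}_{\la e_1, e_2\ra}$ (that $\Psi_{e_1}^A$ is total and equals $R_{e_2}$) fails and the requirement is satisfied vacuously.

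So suppose $\pi(\mc{L}_e) = \infty$. By Lemma \ref{lem:A-on-tree}, $A \in [\mathbb{T}_e]$, and since the $\rho_i$ cover $\mathbb{T}_e$ there is a unique $j$ with $A \succeq \rho_j$. Using this $j$ as constant non-uniform advice, let $\Xi := \Xi_{e,j}$ be the corresponding simulation, which by definition does not use any oracle. I plan to verify that (a) $\Xi$ agrees with $R_{e_2}$ on its domain, and (b) $\Xi$ is decidable in time polynomial in any running-time bound $t$ for $\Psi_{e_1}^A$.

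For (a): if $\Xi(r) \downarrow$, then Lemma \ref{lem:on-tree} supplies some $\sigma \in \mathbb{T}_e$ with $\sigma \succeq \rho_j$ and $\Psi_{e_1}^\sigma(r) = \Xi(r)$. Since $\pi(\mc{L}_e) = \infty$, the construction guarantees that no pair $(\sigma, n)$ with $\sigma \in \mathbb{T}_e$ witnesses $\Psi_{e_1}^\sigma(n) \neq R_{e_2}(n)$; hence $\Xi(r) = \Psi_{e_1}^\sigma(r) = R_{e_2}(r)$ whenever the latter is defined. For (b): we may assume $\Psi_{e_1}^A = R_{e_2}$ is total (otherwise $\mc{L}_e$ is already vacuously satisfied), so for every $r$, $\Psi_{e_1}^A(r) \downarrow$ in some number of steps $t(r)$. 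Lemma \ref{lem:result-watched} (the second one bearing that label) supplies a polynomial $q$, depending only on $e$, such that $\Psi_{e_1,s}^A(r) \downarrow$ implies $\Xi(r) \downarrow$ by stage $q(s)$ of the simulation. Combined with Remark \ref{rem:sim-poly}, which says stage $s$ of the simulation is executable in time polynomial in $s$, we obtain a polynomial $p$ so that $\Xi(r)$ is computable in time $p(t(r))$. Together with (a), this gives $R_{e_2} = \Xi$ decidable without oracle in time $p \circ t$, as required.

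The main obstacles were all absorbed into the preceding lemmas, namely the correctness of the simulation (Lemma \ref{lem:on-tree}) and its timeliness (Lemma \ref{lem:result-watched}); the remaining work is just to marshal these ingredients. The only subtlety is that the choice of the index $j$ is non-uniform, which is unproblematic because being low for speed is preserved under a constant amount of advice---the algorithm $\Xi_{e,j}$ is a single oracle-free procedure once $j$ is hard-coded.
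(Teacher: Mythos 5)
Your proof is correct and follows essentially the same route as the paper's: dispatch the finitary outcome via the diagonalization built into the true-path construction, and in the infinitary case combine Lemma~\ref{lem:on-tree} (correctness of the simulation $\Xi_{e,j}$) with Lemma~\ref{lem:result-watched} and Remark~\ref{rem:sim-poly} (polynomial timeliness), noting that the choice of $j$ is a constant piece of non-uniform advice. The only cosmetic difference is that you argue the correctness of $\Xi$ directly rather than by contradiction as the paper does.
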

\begin{proof}
	Given $\la e,i \ra$, suppose that $\Psi_{e}^A = R_{i}$ and that $\Psi_e^A(n)$ is computable in time $t(n)$. We must show that $R_{i}$ is computable in time $p(t(n))$ for some polynomial $p$. Note that the outcome of $\mathcal{L}_{\la e,i \ra}$ must be $\infty$, as otherwise we would have ensured that $\Psi_{e}^A \neq R_{i}$. Let $j$ be such that $A$ extends the $\rho_j$ from the simulation for $e$. So by Lemma \ref{lem:result-watched} $\Xi_{\la e,i \ra,j}$ is total and there is a polynomial $p$ depending only on $\la e,i \ra$ such that if $\Psi_{e}^A(n)$ is computed in time $s$, then $\Xi_{\la e,i \ra,j}(n)$ is computed in time $p(s)$.
	
	Now we argue that $\Xi_{\la e,i \ra,j}$ computes $R_{i} = \Psi_e^A$. Suppose not; then there is $n$ such that $\Xi_{\la e,i \ra,j}(n) \neq R_{i}(n) = \Psi_{e}^A(n)$.
	Since $\Xi_{\la e,i \ra,j}(n)$ does in fact converge, by Lemma \ref{lem:on-tree} there is $\sigma \in \mathbb{T}_{\la e,i \ra}$ extending $\rho_j$ such that $\Psi_e^\sigma(n) = \Xi_{{\la e,i \ra},j}(n) \neq R_i(n)$.
	This contradicts the fact that the outcome of $\mc{L}_{\la e,i \ra}$ is $\infty$, as we would have chosen $\tau$ as the outcome.
\end{proof}

\bibliography{References}
\bibliographystyle{alpha}

\end{document}